\newtheorem{theorem}{Theorem}[section]
\newtheorem{corollary}[theorem]{Corollary}
\newtheorem{lemma}[theorem]{Lemma}
\newtheorem{proposition}[theorem]{Proposition}
\theoremstyle{definition}
\newtheorem{definition}[theorem]{Definition}
\newtheorem{example}[theorem]{Example}
\theoremstyle{remark}
\newtheorem{remark}[theorem]{Remark}
\newtheorem{note}[theorem]{Note}
\numberwithin{equation}{section}
\newcommand{\GL}{\mathrm{GL}}
\newcommand{\Gr}{\mathbf{Gr}}
\newcommand{\Sub}{\mathrm{Sub}}
\newcommand{\Env}{\mathrm{Env}}
\newcommand{\Rec}{\mathrm{Rec}}
\newcommand{\SB}{\mathrm{SB}}
\newcommand{\Base}{\mathrm{Base}}
\newcommand{\Me}{\mathrm{St}}
\newcommand{\chiNF}{\chi}
\newcommand{\Tail}{\mathrm{Tail}}
\newcommand{\Fix}{\mathrm{Fix}}
\newcommand{\CAT}{\mathrm{CAT}}
\newcommand{\Lie}{\mathrm{Lie}}
\newcommand{\PSL}{\mathrm{PSL}}
\newcommand{\PGL}{\mathrm{PGL}}
\newcommand{\Ad}{\mathrm{Ad}}
\newcommand{\Aut}{\mathrm{Aut}}
\newcommand{\IRS}{\mathrm{IRS}}
\newcommand{\Out}{\mathrm{Out}}
\newcommand{\Hd}{\mathrm{Hd}}
\newcommand{\Rgf}{\overline{H}_g}
\newcommand{\Rgfi}{\overline{H}_{g^{-1}}}
\newcommand{\arrow}{\rightarrow}
\newcommand{\compos}{\circ}
\newcommand{\trivgp}{\langle e \rangle}
\newcommand{\R}{\mathbf R}
\newcommand{\Q}{\mathbf Q}
\newcommand{\N}{\mathbf N}
\newcommand{\Z}{\mathbf Z}
\newcommand{\F}{\mathbf F}
\newcommand{\I}{\mathbf I}
\newcommand{\Fc}{\mathcal{F}}
\newcommand{\Bc}{\mathcal{B}}
\newcommand{\Ac}{\mathcal{A}}
\newcommand{\Rc}{\mathcal{R}}
\newcommand{\Eg}{\operatorname{\mathcal{E}}}
\newcommand{\Fg}{\operatorname{\mathcal{F}}}
\newcommand{\Fgg}{\operatorname{\mathcal{G}}}
\newcommand{\Hg}{\operatorname{\mathcal{H}}}
\renewcommand{\Mc}{\operatorname{Prob}}
\newcommand{\Fin}{\operatorname{Fin}}
\newcommand{\PP}{\mathbb P}
\newcommand{\G}{\mathbf G}
\renewcommand{\H}{\mathbf H}
\newcommand{\leftIRS}{\leftslice}
\newcommand{\Rad}{\operatorname{Rad}}
\newcommand{\chr}{\operatorname{char}}
\newcommand{\gc}{\mathfrak{g}}
\newcommand{\hc}{\mathfrak{h}}
\newcommand{\norm}[1]{\left\Vert#1\right\Vert}
\begin{document}
\title{Invariant random subgroups of linear groups}%
\author{Yair Glasner}

%


\subjclass[2010]{Primary 28D15; Secondary 34A20, 20E05, 20E25, 20E42}%
\keywords{IRS, linear groups, group topology, non-free action, amenable radical.}%

\maketitle 
\begin{center} With an appendix by Tsachik Gelander and Yair Glasner \end{center}
\begin{abstract}
Let $\Gamma < \GL_n(F)$ be a countable non-amenable linear group with a simple, center free Zariski closure. Let $\Sub(\Gamma)$ denote the space of all subgroups of $\Gamma$ with the, compact, metric, Chabauty topology. An {\it{invariant random subgroup}} (IRS) of $\Gamma$ is a conjugation invariant Borel probability measure on $\Sub(\Gamma)$. An $\IRS$ is called {\it{nontrivial}} if it does not have an atom in the trivial group, i.e. if it is nontrivial almost surely.  We denote by $\IRS^{0}(\Gamma)$ the collection of all nontrivial $\IRS$ on $\Gamma$.
\begin{theorem} 
With the above notation, there exists a free subgroup $F < \Gamma$ and a non-discrete group topology on $\Gamma$ such that for every $\mu \in \IRS^{0}(\Gamma)$ the following properties hold:
\begin{itemize}
\item $\mu$-almost every subgroup of $\Gamma$ is open.
\item $F \cdot \Delta = \Gamma$ for $\mu$-almost every $\Delta \in \Sub(\Gamma)$.
\item $F \cap \Delta$ is infinitely generated, for every open subgroup.  In particular this holds for $\mu$-almost every $\Delta \in \Sub(\Gamma)$.
\item The map
\begin{eqnarray*}
\Phi: \left(\Sub(\Gamma),\mu \right) & \arrow & \left(\Sub(F),\Phi_* \mu \right) \\
\Delta & \mapsto & \Delta \cap F
\end{eqnarray*}
is an $F$-invariant isomorphism of probability spaces.  
\end{itemize}
\end{theorem}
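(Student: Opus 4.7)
The plan is to build the free subgroup $F$ and the non-discrete topology $\tau$ in tight coordination, so that $F$ is $\tau$-dense in $\Gamma$ while $\tau$ admits a countable neighbourhood basis $\{N_k\}$ of infinite subgroups at the identity having the crucial property that every $\mu\in\IRS^0(\Gamma)$ is concentrated on subgroups containing some $N_k$. For the free subgroup I would exploit Zariski density of $\Gamma$ in the simple, center-free closure $\G$ and apply a refined Tits-style ping-pong on a projective representation of $\G$: recursively select proximal elements $\gamma_i\in\Gamma$ with attracting and repelling fixed points in sufficiently general position to freely generate $F=\langle\gamma_i\rangle$ of countably infinite rank, imposing at each step the additional constraints that $F\cdot N_k=\Gamma$ (density) and that $F\cap N_k$ be infinitely generated. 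The appendix with Gelander presumably provides the technical strengthening of ping-pong that accommodates these supplementary genericity demands. The topology $\tau$ itself should arise from the algebraic structure of $\G$, for instance by taking $N_k=\Gamma\cap U_k$ for a descending chain of open subgroups $U_k$ in a suitable completion of $\G$, or as normal closures of suitably chosen unipotent elements.

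Granted the structural input that, for every $\mu\in\IRS^0(\Gamma)$, some $N_k$ lies inside $\Delta$ for $\mu$-a.e.\ $\Delta$, the four bullets follow cleanly. Openness of $\mu$-a.e.\ $\Delta$ is immediate, since $\Delta$ is then a union of $N_k$-cosets, hence $\tau$-open and $\tau$-closed. The identity $F\cdot\Delta=\Gamma$ follows from the density condition $F\cdot N_k=\Gamma$ and $N_k\subset\Delta$. Infinite generation of $F\cap\Delta$ follows from $F\cap\Delta\supset F\cap N_k$, whose right side was arranged to be infinitely generated, and this conclusion holds for every $\tau$-open subgroup. For the isomorphism of probability spaces, observe that for any $\tau$-open $\Delta$ and any $\delta\in\Delta$, $\tau$-density of $F$ produces $f\in F$ arbitrarily $\tau$-close to $\delta$; since $\Delta$ is open, such $f$ must eventually lie in $\Delta$, so $\delta\in\overline{F\cap\Delta}^{\tau}$. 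Combined with the reverse inclusion from closedness of $\Delta$, this yields $\Delta=\overline{\Delta\cap F}^{\tau}$ almost surely, so the Borel map $\Psi(H):=\overline{H}^{\tau}$ is a measurable inverse to $\Phi$ on a full-measure set; $F$-equivariance is automatic from $\Phi(f\Delta f^{-1})=f(\Delta\cap F)f^{-1}$ for $f\in F$.

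The main obstacle is the structural input itself: producing a single non-discrete topology whose basic neighbourhoods are uniformly captured by every IRS in $\IRS^0(\Gamma)$. This is a uniform rigidity statement for IRS of linear groups, asserting that non-triviality alone forces certain universal algebraic subgroups $N_k$ to sit inside the random subgroup $\Delta$, and I expect its proof to combine dynamical arguments in the spirit of Stuck--Zimmer and Nevo--Stuck--Zimmer with the Zariski density and simplicity hypotheses on $\Gamma$, probably via boundary theory on the Furstenberg boundary of $\G$ or an ergodic-theoretic analysis of the conjugation action on $\Sub(\Gamma)$. A secondary challenge is the mutual dependence between Steps~1 and~2 — $F$ must be $\tau$-dense and must meet each $N_k$ in an infinitely generated subgroup, while $\tau$ is itself specified in terms of algebraic data independent of $F$ — which I would handle by a diagonal construction that interlaces the choice of generators $\gamma_i$ with the choice of neighbourhoods $N_k$. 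Once $\tau$ is in place with the required uniform absorption property, the four bullets reduce to the essentially formal closure argument above.
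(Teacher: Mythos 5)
Your high-level architecture matches the paper's: a group topology whose basic identity neighbourhoods are ``absorbed'' by almost every subgroup of every nontrivial IRS, interlaced with a ping-pong construction of a free group $F$ meeting prescribed cosets. But everything you defer to ``structural input'' is precisely the content of the paper's main technical theorem, and the routes you sketch for it would not work. First, there are no deterministic subgroups $N_k$ forming a \emph{descending chain} (or even a filter base) that are contained in $\mu$-a.e.\ $\Delta$ for every $\mu\in\IRS^0(\Gamma)$; neither $\Gamma\cap U_k$ for open subgroups of a completion nor normal closures of unipotents will do this. What the paper actually produces is a countable family $\Hg=\{\Theta_p\}$ of finitely generated \emph{essential} subgroups (subgroups with $\mu(\Env(\Theta_p))>0$) that merely \emph{covers}: for a.e.\ $\Delta$ \emph{some} $\Theta_p$ (depending on $\Delta$) lies in $\Delta$. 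These $\Theta_p$ are not nested, so non-discreteness of the topology they generate --- i.e.\ that finite intersections of the sub-basic subgroups are nontrivial --- is itself a theorem, proved by a Poincar\'e-recurrence-plus-commutator ping-pong argument (Theorems \ref{thm:commutator} and \ref{thm:baby_case}); your proposal makes non-discreteness true by fiat. The existence of the cover is proved not by Stuck--Zimmer/boundary rigidity (which needs higher rank hypotheses unavailable here) but by the locally essential lemma (Lemma \ref{lem:loc_ess}), Poincar\'e recurrence, and proximal dynamics on a projective space over a (generally non-local) valued field, culminating in free generators $f(p,q,\gamma)\in\Theta_q\gamma\Theta_p$ in \emph{every double coset} --- a weaker and achievable substitute for your requirement $F\cdot N_k=\Gamma$.

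Two further steps in your formal derivation are gapped. Your proof of the isomorphism property requires genuine $\tau$-density of $F$ in $\Gamma$ (so that $\Delta=\overline{F\cap\Delta}^{\tau}$ and closure inverts $\Phi$); the paper explicitly notes that it only establishes the weaker ``measurable density'' $F\Delta=\Gamma$ a.s., and instead proves essential injectivity of $\Phi$ directly (via the double-coset generators and Souslin's theorem). And your inference that $F\cap\Delta$ is infinitely generated because it \emph{contains} the infinitely generated group $F\cap N_k$ is invalid as stated: a finitely generated subgroup of a free group can contain infinitely generated subgroups (e.g.\ $\langle a,b\rangle\supset\langle\langle a\rangle\rangle$), so containment alone does not transfer infinite generation upward; the paper derives this property from \textbf{Isom} in the non-atomic case and from a finite-index-normalizer argument in the atomic case.
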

A more technical version of this theorem is valid for general countable linear groups. We say that an action of $\Gamma$ on a probability space, by measure preserving transformations, is {\it{almost surely non free}} (ASNF) if almost all point stabilizers are non-trivial. 
\begin{corollary} \label{cor:prod}
Let $\Gamma$ be as in the Theorem above. Then the product of finitely many ASNF $\Gamma$-spaces, with the diagonal $\Gamma$ action, is ASNF. 
\end{corollary}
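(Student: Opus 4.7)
The plan is to upgrade the ASNF property from a single factor to a finite product by exploiting the non-discrete group topology $\tau$ furnished by the theorem. The decisive feature of the theorem is that a \emph{single} $\tau$ is asserted to serve every $\mu \in \IRS^{0}(\Gamma)$ simultaneously; this quantifier order is what will let a finite intersection of stabilizers remain open.

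First I would translate the hypothesis into the IRS framework. Given a p.m.p.\ action $\Gamma \curvearrowright (X_i, \mu_i)$ for $i = 1, \ldots, n$, the stabilizer map $x \mapsto \Stab_\Gamma(x)$ is a Borel map $X_i \to \Sub(\Gamma)$, and $\Gamma$-invariance of $\mu_i$ together with the identity $\Stab_\Gamma(gx) = g\Stab_\Gamma(x) g^{-1}$ makes its pushforward $\nu_i$ a conjugation-invariant probability measure on $\Sub(\Gamma)$. The ASNF hypothesis says precisely that $\nu_i$ has no atom at $\langle e\rangle$, so $\nu_i \in \IRS^{0}(\Gamma)$ for every $i$. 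Applying the theorem, I fix one non-discrete group topology $\tau$ on $\Gamma$ such that each $\nu_i$ is concentrated on $\tau$-open subgroups; pulling back, for $\mu_i$-a.e.\ $x_i$ the stabilizer $\Stab_\Gamma(x_i)$ is $\tau$-open.

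For the diagonal action on $X := X_1 \times \cdots \times X_n$ with product measure, the stabilizer of a point $(x_1, \ldots, x_n)$ is $\bigcap_{i=1}^{n} \Stab_\Gamma(x_i)$. By Fubini, for a.e.\ such tuple every factor $\Stab_\Gamma(x_i)$ is $\tau$-open, hence their finite intersection is $\tau$-open as well. Finally, non-discreteness of $\tau$ means that any $\tau$-open set containing $e$ strictly contains $\{e\}$, so every $\tau$-open subgroup is nontrivial. Thus the diagonal stabilizer is nontrivial almost surely, which is the ASNF conclusion for the product. There is no real obstacle here beyond invoking the theorem correctly; the whole content of the corollary is the uniformity of $\tau$ across $\IRS^{0}(\Gamma)$, together with the trivial fact that a finite intersection of open subgroups in a non-discrete group topology is open and nontrivial.
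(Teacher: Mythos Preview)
Your proof is correct and follows essentially the same route as the paper. The paper phrases the argument in terms of the intersection IRS $\mu_1 \cap \mu_2$ (see Subsection~\ref{sec:ind_res} and the paragraph in Subsection~\ref{sec:proof_main} beginning ``Let $\mu_1,\mu_2 \in \IRS^{\chi}(\Gamma)$''), observing that it again lies in $\IRS^{\chiNF}(\Gamma)$ because almost every subgroup is open in the uniform stabilizer topology and finite intersections of open subgroups remain open and hence nontrivial; your direct argument via stabilizers and Fubini is the same mechanism unpacked.
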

\begin{corollary} 
Let $\Gamma < \GL_n(F)$ be a countable linear group, $A \lhd \Gamma$ the maximal normal amenable subgroup of $\Gamma$ - its {\it{amenable radical}}. If $\mu \in \IRS(\Gamma)$ is supported on amenable subgroups of $\Gamma$ then in fact it is supported on $\Sub(A)$. In particular if $A(\Gamma) = \trivgp$ then $\Delta = \trivgp, \mu$ almost surely. \end{corollary}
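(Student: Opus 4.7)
The plan is to reduce the claim to the setting of the main theorem by first quotienting out the amenable radical. Let $\bar\Gamma := \Gamma/A$ and let $\bar\mu$ be the pushforward of $\mu$ under the quotient map. Since $A$ is normal, $\bar\mu$ is a conjugation invariant probability measure on $\Sub(\bar\Gamma)$, and since amenability passes to quotients, $\bar\mu$ is supported on amenable subgroups. The desired conclusion $\mu(\Sub(A)) = 1$ is equivalent to $\bar\mu = \delta_{\trivgp}$, so it suffices to prove the statement under the additional assumption $A(\Gamma) = \trivgp$.

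Assume henceforth that $\Gamma$ has trivial amenable radical. I would next present $\Gamma$ as a subgroup of a direct product of linear groups with simple center-free Zariski closure. Let $G$ be the Zariski closure of $\Gamma$ in $\GL_n(F)$ and let $R = R(G)$ be its solvable radical. Since $\Gamma \cap R$ is a normal amenable subgroup of $\Gamma$, the hypothesis $A = \trivgp$ forces $\Gamma \cap R = \trivgp$, so $\Gamma$ embeds into the semisimple quotient $G/R$. Triviality of $A$ also rules out finite normal subgroups of $\Gamma$, so $\Gamma$ embeds further into the adjoint form $G^{\mathrm{ad}} \cong \prod_{i=1}^{k} S_i$, a direct product of connected simple center-free algebraic groups. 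Let $p_i : \Gamma \to S_i$ denote the resulting projections; the image $\Gamma_i := p_i(\Gamma)$ is Zariski dense in $S_i$, and hence non-amenable by the Tits alternative, so each $\Gamma_i$ satisfies the hypotheses of the main theorem.

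Applying the main theorem to each $\Gamma_i$, the pushforward $(p_i)_* \mu$ is an IRS of $\Gamma_i$ supported on amenable subgroups. The theorem says that for every $\nu \in \IRS^0(\Gamma_i)$ there is a free subgroup $F_i < \Gamma_i$ such that $\nu$-almost every $\Delta$ intersects $F_i$ in an infinitely generated subgroup. Such an intersection is itself free of rank at least two, hence non-amenable, and therefore $\Delta$ is non-amenable. Consequently the only IRS of $\Gamma_i$ supported on amenable subgroups is $\delta_{\trivgp}$, giving $(p_i)_* \mu = \delta_{\trivgp}$ for each $i$. Therefore $\mu$-almost every $\Delta \in \Sub(\Gamma)$ projects trivially to every factor $S_i$ and is itself trivial, which is the claim.

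The hard step will be the second paragraph: correctly identifying $\Gamma$, once its amenable radical is trivialized, as a subgroup of a direct product of simple center-free algebraic groups. This requires care with the relationship between the amenable radical of $\Gamma$ and the solvable radical of its Zariski closure --- especially in positive characteristic where the amenability of algebraic groups is more subtle --- and with the passage from the semisimple quotient $G/R$ to its adjoint form, which is essential for meeting the main theorem's hypothesis that the Zariski closure be \emph{center-free}.
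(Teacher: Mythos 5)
Your overall strategy --- reduce to a product of simple center-free factors and apply the simple version of the main theorem to each factor --- is essentially the route the paper takes (the corollary is stated as an immediate consequence of Theorem \ref{thm:main_irs}, whose proof in Section \ref{sec:reduction_as} performs exactly this reduction). But your version of the reduction has a genuine gap at the step you yourself flag as delicate. The claim that each $\Gamma_i = p_i(\Gamma)$ is ``Zariski dense in $S_i$, and hence non-amenable by the Tits alternative'' is false when $\Gamma$ is not finitely generated and $\chr(F) > 0$: Remark \ref{rem:ast_ammenable} points out that $\PSL_n(\overline{\F}_p)$ is an amenable (locally finite) group with simple Zariski closure, and such a group can perfectly well occur as a factor $p_i(\Gamma)$ of a group $\Gamma$ with trivial amenable radical (e.g.\ a fiber product over a non-amenable factor). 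For such a factor Theorem \ref{thm:one_ell} does not apply --- non-amenability is one of its hypotheses --- and your argument then gives no control over $p_i(\Delta)$. The gap is repairable: once $p_j(\Delta) = \trivgp$ for all non-amenable factors $j$, the random subgroup lies in $\Gamma \cap \prod_{i \in I_{\mathrm{am}}} S_i$, a normal subgroup of $\Gamma$ that embeds in the amenable group $\prod_{i \in I_{\mathrm{am}}} p_i(\Gamma)$ and is therefore trivial when $A(\Gamma) = \trivgp$. This is precisely how Theorem \ref{thm:main_irs} absorbs the amenable factors into the stratum $S_0$; you need to make this argument explicitly.

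Two further points need justification. First, your opening reduction replaces $\Gamma$ by $\Gamma/A$, but the statement you then prove is about \emph{linear} groups, and the linearity of $\Gamma/A(\Gamma)$ is not obvious (particularly in positive characteristic); the paper avoids this entirely by never quotienting by $A$, working instead with the representations $\chi_\ell$ and observing only that their common kernel is contained in $A$. Second, the Zariski closure $G$ need not be connected, in which case $G^{\mathrm{ad}}$ is not a direct product of simple groups --- the component group may permute the simple factors --- and the projections $p_i$ are homomorphisms only on the finite-index subgroup $\Gamma \cap G^{(0)}$; one must restrict the IRS to that subgroup and argue back up, which is exactly why Theorem \ref{thm:main_irs} carries a connectedness hypothesis. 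Your third paragraph (an infinitely generated intersection with a free group is non-abelian free, hence non-amenable, hence each pushforward is $\delta_{\trivgp}$, hence $\Delta$ is trivial) is correct as stated, modulo conditioning away a possible atom at $\trivgp$ before invoking the theorem for $\IRS^{0}$.
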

\end{abstract}
\newpage
\tableofcontents 
\section{Introduction}
Before discussing our main theorem and some corollaries, we need to develop the language of invariant random subgroups. 
\subsection{Invariant random subgroups}
 Let $\Gamma$ be a discrete countable group and $\Sub(\Gamma)$ the collection of all subgroups of $\Gamma$ endowed with the compact Chabauty topology. A basis for this topology is given by sets of the form 
$$U_{M}(C) = \left \{ D \in \Sub(\Gamma) \ \left | \ D \cap M = C \cap M \right. \right \}.$$
Where $C \in \Sub(\Gamma)$ and $M \subset \Gamma$ is a finite subset. In other words, $\Sub(\Gamma) \subset 2^{\Gamma}$ inherits its  topology from the Tychonoff topology on $2^{\Gamma}$. The last definition shows that $\Sub(\Gamma)$ is a compact metrizable space. Every subgroup $\Sigma \in \Sub(\Gamma)$ gives rise to two closed subsets, the collection of its subgroups and supergroups
\begin{eqnarray*}
\Sub(\Sigma) & = & \left\{\Delta \in \Sub(\Gamma) \ | \ \Delta < \Sigma \right \} \subset  \Sub(\Gamma) \\
\Env(\Sigma) & = & \left \{\Delta \in \Sub(\Gamma) \ | \ \Sigma < \Delta \right \} \subset \Sub(\Gamma).
\end{eqnarray*}
The latter is referred to as the {\it{envelope}} of $\Sigma$, it is open whenever $\Sigma$ is finitely generated. The collection of open sets
$$\left \{\Env(\Sigma) \ | \ \Sigma \in \Sub(\Gamma) {\text{ f.g.}} \right \} \bigcup \left \{ \Sub(\Gamma) \setminus \Env(\Sigma) \ | \ \Sigma \in \Sub(\Gamma) {\text{ f.g.}} \right \}.$$
forms a basis for the topology of $\Sub(\Gamma)$ since: 
$$U_M(C) = \Env(\langle M \cap C \rangle) \cap \left(\bigcap_{\gamma \in M \setminus C} \Sub(\Gamma) \setminus \Env(\langle \delta \rangle) \right).$$
The group $\Gamma$ acts (continuously from the left) on $\Sub(\Gamma)$ by conjugation $\Gamma \times \Sub(\Gamma) \arrow  \Sub(\Gamma)$ namely $(g,H)  \mapsto gHg^{-1}$. The following definition plays a central role in all of our discussion: 
\begin{definition}
Let $\Gamma$ be a countable group. An {\it{invariant random subgroup}} or an {\it{IRS}} on $\Gamma$ is a Borel probability-measure $\mu \in \Mc(\Sub(\Gamma))$ which is invariant under the $\Gamma$ action. We denote the space of all invariant random subgroups on $\Gamma$ by $\IRS(\Gamma) = \Mc_{\Gamma}(\Sub(\Gamma))$. 

An IRS is called {\it{nontrivial}} if it does not have an atom at the trivial group $\trivgp$. The collection of all nontrivial IRS will be denoted $\IRS^{0}(\Gamma)$. An IRS is ergodic if it is an ergodic measure with respect to the action of $\Gamma$. 

As is customary in probability theory, we will sometimes just say that $\Delta$ is an IRS in 
$\Gamma$ and write $\Delta \leftIRS \Gamma$, when we mean that some $\mu \in \IRS(\Gamma)$ has been implicitly fixed and that $\Delta \in \Sub(\Gamma)$ is a $\mu$-random sample. Thus in our terminology above saying that an IRS is nontrivial means that it is nontrivial almost surely. 
\end{definition}

\begin{remark}
The theory of invariant random subgroups can be developed also in the more general setting where $\Gamma$ is a locally compact second countable group. In that case $\Sub(\Gamma)$ is taken to be the space of all {\it{closed}} subgroups of $\Gamma$ and it is again a compact metrizable space. We will not pursue this more general viewpoint here. We refer the interested reader to the paper \cite{7_sam:ann} and the references therein. 
\end{remark}
The term ``IRS'' was introduced in a pair of joint papers with Ab\'{e}rt and Vir\'{a}g \cite{AGV:Kesten_Measureable, AGV:Kesten_IRS}, however the paper of Stuck-Zimmer \cite{SZ} is quite commonly considered as the first paper on this subject. That paper provides a complete classification of IRS in a higher rank simple Lie group $G$, by showing that every ergodic IRS is supported on a single orbit (i.e. conjugacy class), either of a central normal subgroup or of a lattice. A similar classification is given of the ergodic IRS of a lattice $\Gamma < G$. These are supported either on a finite central subgroup or on the conjugacy class of a finite index subgroup. Recent years have seen a surge of activity in this subject, driven by its intrinsic appeal based on the interplay between group theory and ergodic theory, as well as by many applications that were found. We mention in combinatorics and probability \cite{AGV:Kesten_Measureable, AL:unimodular_random_networks,LP:cycle_density,Cannizzo:inv_sch}, representation theory and asymptotic invariants \cite{7_sam:ann,7_sam,BG:Betti,Ram:arith_orbi},  dynamics \cite{bowen:furst_ent,Verskik:tnf}, group theory \cite{AGV:Kesten_IRS,Vershik:characters,bowen:irs_free,BGK:irs_lamp,BGK:char_irs, GM:top_full_gp,KN:subset_currents,TTd:finite_sym} and rigidity \cite{SZ,Bekka:SZ,PT:stab_erg,CP:char_rigidity,CP:stab_erg,HT:rigidity,TD:shift_minimal,Cr:stab_prod}. Topological analogues of IRS were introduced in \cite{GW:URS}. 

The current paper initiates a systematic study of the theory of IRS in countable linear groups. The theory of linear groups is known as a benchmark of sorts in group theory. Contrary to the wild nature of general countable groups, linear groups support a very elaborate structure theory. On the other hand they exhibit a very wide array of phenomena. They are not too specialized, compared for example with abelian, nilpotent or even amenable groups. In this capacity methods and questions that were addressed in the setting of linear groups are often pushed further to other groups of ``geometric nature''. Hyperbolic and relatively hyperbolic groups, acylindrically hyperbolic groups, convergence groups, mapping class groups, $\Out(F_n)$, and even just residually finite groups are some of the examples. Undoubtably many of the methods developed here can be applied in many of these other geometric settings. 

One question which is inaccessible for linear groups, or for any other family of groups that is not extremely special, is the study of all subgroups of a given group $\Gamma$. This is out of reach even for lattices in higher rank simple Lie groups, where we do have a complete understanding of seemingly similar problems such as the classification of all quotient groups or of all finite dimensional representations. An outcome of the Stuck-Zimmer paper \cite{SZ} is that, in the presence of an invariant measure - an IRS - the situation changes dramatically. For a lattice in a simple Lie group almost every subgroup, with respect to any IRS, is either finite central or of finite index. We would like to draw the attention to this formation of quantifiers. It will appear frequently, as we attempt to follow a similar path, proving in the setting of a countable linear group $\Gamma$, statements that hold for almost every subgroup with respect to every IRS. 

\subsection{Examples} \label{sec:eg_IRS}
\begin{example} \label{eg:normal}
{\it{Normal subgroups:}} If $N \lhd \Gamma$ is a normal subgroup then the Dirac measure $\delta_N$ supported on the single point $N \in \Sub(\Gamma)$ is an IRS.
\end{example}
\begin{example} \label{eg:fin}
{\it{Almost normal subgroups:}} If $H < \Gamma$ is such that $[\Gamma: N_{\Gamma}(H)] < \infty$ then we will say that $H$ is an {\it{almost normal subgroup}}. In this case a uniformly chosen random conjugate of $H$ is an IRS. Normal and finite index subgroups are both examples of almost normal subgroups.  
\end{example}
\begin{example} \label{eg:pmp}
{\it{Probability-measure preserving actions:}} If $\Gamma \curvearrowright (X, \Bc, \mu)$ is an action on a probability space by measure preserving  transformations then the stabilizer $\Gamma_x := 
\{ \gamma \in \Gamma \ | \ \gamma x=x \} \leftIRS \Gamma$ of a $\mu$-random point $x \in X$ is an IRS. The probability measure responsible for this IRS is $\Phi_*(\mu)$ where $\Phi: X \arrow \Sub(\Gamma)$ is the stabilizer map $x \mapsto \Gamma_x$. 
Note that such an IRS is nontrivial if and only if it comes from an action that is almost surely non free. Following our convention that and IRS is called nontrivial if it is nontrivial almost surely. 

It was shown in \cite[Proposition 13]{AGV:Kesten_IRS} that every invariant random subgroup of a finitely generated group is obtained in this fashion: 
\begin{lemma} \label{lem:IRS}
For every invariant random subgroup $\nu \in \IRS(\Gamma)$ of a countable group there exists a probability-measure preserving action $\Gamma \curvearrowright (X,\Bc,\mu)$ such that $\nu = \Phi_*(\mu)$ where $\Phi: X \arrow \Sub(\Gamma)$ is the stabilizer map $x \mapsto \Gamma_x$.
\end{lemma}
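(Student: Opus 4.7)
The plan is to realize an arbitrary $\nu \in \IRS(\Gamma)$ as the push-forward of the stabilizer map of an explicit probability-measure-preserving $\Gamma$-action. The naive attempt of taking $(X,\mu) = (\Sub(\Gamma),\nu)$ with the Chabauty conjugation action fails, since the stabilizer of $H \in \Sub(\Gamma)$ under conjugation is the normalizer $N_\Gamma(H)$ rather than $H$ itself. To fix this I would pass to an enlarged bundle in which each $H$ is decorated by a random labelling of $\Gamma/H$; a generic labelling is injective, and this injectivity generically pins down the basepoint $eH$.

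Concretely, set
$$X \;=\; \left\{(H,\bar g) \;:\; H \in \Sub(\Gamma),\ \bar g \in [0,1]^{\Gamma/H}\right\},$$
realized as a Borel subset of the standard Borel space $\Sub(\Gamma) \times [0,1]^\Gamma$ via the identification of $\bar g$ with its pullback $g : \Gamma \to [0,1]$ constant on left cosets of $H$. Equip $X$ with the measure
$$\mu \;=\; \int_{\Sub(\Gamma)} \bigl(\delta_H \otimes \lambda^{\Gamma/H}\bigr)\,d\nu(H),$$
and define the $\Gamma$-action
$$\gamma \cdot (H,\bar g) \;=\; \bigl(\gamma H \gamma^{-1},\,\bar g'\bigr), \qquad \bar g'\bigl(y \cdot \gamma H \gamma^{-1}\bigr) \;:=\; \bar g(y\gamma H),$$
which is well-defined because $y\gamma H$ depends only on $y$ modulo $\gamma H\gamma^{-1}$, and is readily verified to be a group action.

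Then I would check the two key facts. First, $\mu$ is $\Gamma$-invariant: conjugation sends the fibre over $H$ bijectively onto the fibre over $\gamma H\gamma^{-1}$ by a coset relabelling that transports Lebesgue to Lebesgue, while the $\Gamma$-invariance of $\nu$ handles the base. Second, for $\mu$-almost every $(H,\bar g)$ one has $\Stab(H,\bar g) = H$: assuming $\bar g$ is injective on $\Gamma/H$ (a $\lambda^{\Gamma/H}$-conull condition), the conditions $\gamma H\gamma^{-1}=H$ and $\bar g(y\gamma H) = \bar g(yH)$ for all $y$ force $y \gamma H = yH$ for every $y$; taking $y=e$ gives $\gamma \in H$, while conversely every $\gamma \in H$ trivially fixes $(H,\bar g)$. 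Hence the stabilizer map $\Phi$ sends $(H,\bar g)$ to $H$ almost surely, so $\Phi_*\mu = \nu$.

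The only real obstacle is measurability bookkeeping: the coset spaces $\Gamma/H$ vary with $H$, so one must verify that the action and the disintegration $H \mapsto \lambda^{\Gamma/H}$ are jointly Borel and compatible with the group law. This is straightforward after embedding $X$ inside $\Sub(\Gamma) \times [0,1]^\Gamma$ as above, and essentially all the technical work goes there rather than into the dynamical stabilizer computation.
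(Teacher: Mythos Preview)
Your construction is correct and is essentially the standard one. Note, however, that the paper does not actually supply its own proof of this lemma: it simply cites \cite[Proposition 13]{AGV:Kesten_IRS} (and \cite[Theorem 2.4]{7_sam} for the locally compact case). The argument in that reference is precisely the random-labelling (Bernoulli-over-coset-space) construction you give, so your proposal matches the intended proof.
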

\noindent This was later generalized to the setting of locally compact second countable groups in \cite[Theorem 2.4]{7_sam}.  
\end{example}

\subsection{Induction, restriction and intersection} \label{sec:ind_res}
Let $\Sigma < \Gamma$ be a subgroup. The natural, $\Sigma$-invariant, restriction map $R^{\Gamma}_{\Sigma}: \Sub(\Gamma)  \arrow  \Sub(\Sigma)$ given by the intersection with $\Sigma$ is defined by the formula $\Delta \mapsto \Delta \cap \Sigma$. This map is clearly continuous and it gives rise to a map on invariant random subgroups $\left(R^{\Gamma}_{\Sigma} \right)_*:\IRS(\Gamma) \arrow \IRS(\Sigma)$. We will denote this map by $\mu \mapsto \mu|_{\Sigma}$. Thus for any $\mu \in \IRS(\Gamma)$ the original restriction map becomes a $\Sigma$ invariant map of probability spaces:
\begin{eqnarray*}
R^{\Gamma}_{\Sigma}: \left(\Sub(\Gamma),\mu \right) & \arrow & \left(\Sub(\Sigma),\mu|_{\Sigma} \right) \\
\Delta & \mapsto & \Delta \cap \Sigma
\end{eqnarray*}

A map in the other direction exists only if $\Sigma$ is of finite index in $\Gamma$. In this case we obtain an induction map
$\IRS(\Sigma) \arrow \IRS(\Gamma)$ given by 
$$\mu \mapsto\mu|^{\Gamma} = \frac{1}{[\Gamma:\Sigma]} \sum_{i = 1}^{[\Gamma: \Sigma]} (\gamma_i )_* \mu,$$
where $\gamma_i$ are coset representatives for $\Sigma$ in $\Gamma$. Since the measure $\mu$ is $\Sigma$ invariant this expression does not depend on the choice of these representatives. In the setting of locally compact groups this construction can be generalized to the case where $\Sigma$ is a lattice in $\Gamma$, or more generally a subgroup of co-finite volume. All one has to do is replace the sum by an integral over $\Gamma/\Sigma$ with respect to the invariant probability measure. 

The {\it{intersection $\mu_1 \cap \mu_2$}} of $\mu_1,\mu_2 \in \IRS(\Gamma)$ is defined as the push forward of the product measure under the (continuous) map
\begin{eqnarray*}
\Sub(\Gamma) \times \Sub(\Gamma) & \arrow & \Sub(\Gamma) \\
(\Delta_1,\Delta_2) & \mapsto & \Delta_1 \cap \Delta_2.
\end{eqnarray*}
This can also be thought of as restricting the IRS $\mu_1$ to a $\mu_2$-random subgroup or vice versa. 

\subsection{The main theorems}
In our main theorem below we adopt the following convention. If $\Sub(\Gamma) = S_0 \sqcup S_1 \sqcup S_2 \sqcup \ldots \sqcup S_L$ is a partition into $\Gamma$-invariant sets and if $\mu \in \IRS(\Gamma)$ we set $a_{\ell} = \mu(S_{\ell})$ and $\mu_{\ell}(A) = \mu(A \cap S_{\ell})/a_{\ell} \in \IRS(\Gamma)$, so that $\mu = \sum_{\ell = 0}^{L} a_{\ell} \mu_{\ell},$
is the standard decomposition of $\mu$ as a convex combination of IRS supported on these parts. 
\begin{theorem} \label{thm:main_irs} (Main theorem, IRS version)
Let $\Gamma < \GL_n(F)$ be a countable linear group with a connected Zariski closure and $A= A(\Gamma)$ its amenable radical. Then there exists a number $L = L(\Gamma) \in \N$, proper free subgroups $\{F_1,F_2, \ldots, F_L\} \subset \Sub(\Gamma)$ and a  partition into $\Gamma$-invariant subsets $\Sub(\Gamma) = S_0 \sqcup S_1 \sqcup \ldots \sqcup S_L$ such that for every $\mu \in \IRS(\Gamma)$ the following properties hold:
\begin{description} 
\item[Amm] \label{itm:Amm} $\Delta < A$ for $\mu_0$-almost every $\Delta \in \Sub(\Gamma)$.
\item[Free] \label{itm:Free} $F_{\ell} \cap \Delta$ is a non-abelian infinitely generated free group, for every $1 \le \ell \le L$ and $\mu_{\ell}$-almost every $\Delta \in \Sub(\Gamma)$. 
\item[Me-Dense] \label{itm:Dense} $F_{\ell} \cdot \Delta = \Gamma$, for every $1 \le \ell \le L$ and $\mu_{\ell}$-almost every $\Delta \in \Sub(\Gamma)$. 
\item[Isom] \label{itm:Isom} The map 
\begin{eqnarray*}
R^{\Gamma}_{F_{\ell}}: (\Sub(\Gamma),\mu_{\ell}) & \rightarrow & (\Sub(F_{\ell}),\mu_{\ell}|_{F_{\ell}}) \\
\Delta & \mapsto & \Delta \cap F
\end{eqnarray*} 
is an ($F_{\ell}$-equivariant) isomorphism of probability spaces, for every $1 \le \ell \le L$ and $\mu_{\ell}$-almost every $\Delta \in \Sub(\Gamma)$. \footnote{Explicitly this means that, after restricting to conullsets in the domain and range this becomes a measurable, measure preserving bijection with measurable inverse.}. 
\end{description}
Finally, it is possible to satisfy the conclusion of the theorem with $L=1$ if and only if $\Gamma/A$ contains no nontrivial commuting almost normal subgroups (as in Example \ref{eg:fin}). 
\end{theorem}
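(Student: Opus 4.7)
My plan is to derive the theorem from the construction of a family of non-discrete, locally compact group topologies on $\Gamma$ adapted to its linear structure, together with a ping-pong argument tailored to each topology. The rough architecture is: (1) use linearity and the connected Zariski closure to produce topologies $\tau_1,\ldots,\tau_L$ on $\Gamma$ coming from embeddings into local fields; (2) partition $\Sub(\Gamma)$ by the topological behavior of subgroups; (3) build each $F_\ell$ inside $(\Gamma,\tau_\ell)$ by ping-pong; (4) deduce the four properties from density and openness, and finally read off the biconditional from the construction.

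For step (1), I write $\Gamma\subset\GL_n(k)$ with $k$ the finitely generated field generated by the matrix entries of a generating set, and let $\G$ be the Zariski closure. Since $\Gamma\not\subset A$, the semisimple quotient $\G/\Rad(\G)$ is nontrivial, and for each simple factor a Tits--Margulis--Soifer style argument produces an embedding $k\hookrightarrow K$ into a local field such that the image of $\Gamma$ has unbounded projection in the corresponding $K$-points; pulling back the local-field topology gives a non-discrete subgroup topology on $\Gamma$. The integer $L$ will equal the minimal number of such topologies needed to jointly detect every nontrivial IRS class modulo $\Sub(A)$, a combinatorial quantity governed by the almost-normal-subgroup structure of $\Gamma/A$, which is precisely the obstruction appearing in the final biconditional.

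For steps (2)--(3), set $S_0=\Sub(A)$ and partition $\Sub(\Gamma)\setminus S_0$ into $L$ $\Gamma$-invariant pieces according to which $\tau_\ell$ detects a given subgroup as open. The main technical point, and the hardest step, is to show that for every $\mu\in\IRS(\Gamma)$ with $\mu(\Sub(A))<1$, $\mu$-almost every $\Delta$ is $\tau_\ell$-open for an appropriate $\ell$: this combines conjugation invariance of $\mu$ with a Borel--density / non-escape estimate in $\G(K_\ell)$ forcing $\mu$-a.e.\ $\Delta$ to have a compact-open intersection with the relevant simple factor, which pulls back to $\tau_\ell$-openness. Each $F_\ell$ is then built by a ping-pong scheme inside $(\Gamma,\tau_\ell)$: one selects Zariski-dense sequences in $\Gamma$ that accumulate at the identity in $\tau_\ell$ and extracts, by a standard ping-pong refinement, a free subgroup whose generators can be chosen arbitrarily close to the identity in $\tau_\ell$.

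Given this setup, the four properties follow in short order: Amm is the definition of $S_0$; Me-Dense follows from $\tau_\ell$-density of $F_\ell$ combined with openness of $\mu_\ell$-a.e.\ $\Delta$; Free follows because every $\tau_\ell$-neighborhood of the identity contains infinitely many independent free generators of $F_\ell$, all of which necessarily lie in the open subgroup $\Delta$; and Isom is established by showing the restriction map is essentially injective (so that Lusin--Souslin supplies a measurable inverse), the injectivity being extracted from the reconstruction of $\Delta$ from $F_\ell\cap\Delta$ via the coset decomposition $\Gamma=F_\ell\cdot\Delta$. For the biconditional, one direction is by construction — absence of commuting almost-normal subgroups allows the topologies $\tau_\ell$ to be amalgamated into a single $\tau$, and a ping-pong diagonal argument gives a single $F$ that works. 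Conversely, commuting nontrivial almost-normal $N_1,N_2<\Gamma/A$ yield explicit IRS's (obtained by averaging Dirac masses over $\Gamma$-conjugates of preimages) whose supports must lie in different $S_\ell$'s, since Isom for a common $F$ would force $F$ to be simultaneously transverse to both $N_1$ and $N_2$, incompatible with their commutation.
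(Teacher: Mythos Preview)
Your architecture is broadly right (pass to simple factors of the semisimple quotient, build a free group per factor via ping-pong on a projective space over a local field, partition $\Sub(\Gamma)$ accordingly), but the proposal has a genuine gap at its core: the topology you describe is the wrong one, and the openness claim fails for it.

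Pulling back the metric topology from $\G(K_\ell)$ gives a topology on $\Gamma$ in which a subgroup $\Delta$ is open only if it contains $\Gamma\cap U$ for some compact-open $U<\G(K_\ell)$; for a generic countable Zariski-dense $\Gamma$ there is no reason a $\mu$-random $\Delta$ should contain any such intersection, and no Borel-density argument will force it to. The paper does \emph{not} use the local-field topology on $\Gamma$. Instead it builds a bespoke ``stabilizer topology'' $\tau_{\Hg}$ whose sub-basic identity neighborhoods are the \emph{recurrent} subgroups containing a conjugate of some member of a carefully constructed countable covering family $\Hg$ of finitely generated essential subgroups. Openness of $\mu$-a.e.\ $\Delta$ then follows from two purely measure-theoretic facts: the locally essential lemma (every finitely generated subgroup of $\mu$-a.e.\ $\Delta$ is $\mu$-essential) and Poincar\'e recurrence. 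The local field enters only as the arena for the ping-pong that produces the free generators, not as the source of the topology.

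A second gap: your derivation of \textbf{Me-Dense} from ``$\tau_\ell$-density of $F_\ell$ plus openness of $\Delta$'' does not work and is not what the paper does. In fact the paper explicitly notes that $F$ is \emph{not} shown to be dense in the stabilizer topology. The property $F\cdot\Delta=\Gamma$ is obtained instead by engineering the free generators to hit every double coset $\Theta_q\gamma\Theta_p$ with $\Theta_p,\Theta_q\in\Hg$ and $\gamma\in\Gamma$; since $\mu$-a.e.\ $\Delta$ and $\gamma^{-1}\Delta\gamma$ each contain some $\Theta_p,\Theta_q$, one gets $\gamma\in F\Delta$ directly. Your reconstruction argument for \textbf{Isom} is also too soft: the paper's injectivity proof again uses these double-coset representatives to separate $\Delta$ from $\Delta'$ inside $F$.

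Finally, the partition is not ``by which $\tau_\ell$ makes $\Delta$ open'' but by the filtration of kernels $A^\ell=\bigcap_{i>\ell}\ker\chi_i$: one sets $S_\ell=\Sub(A^\ell)\setminus\Sub(A^{\ell-1})$, so that $\Delta\in S_\ell$ exactly when $\chi_\ell(\Delta)\neq\trivgp$ but $\chi_i(\Delta)=\trivgp$ for $i>\ell$. Your biconditional sketch is in the right spirit for one direction, but the converse in the paper is sharper: assuming $L=1$ and commuting almost-normal $M,N\not<A$, one shows $F\cdot N=\Gamma$ and $F\cap M$ nonabelian force $FA/A$ to be almost normal, whence $\Gamma=FA$ and $\Gamma/A\cong F$ is free, contradicting the existence of commuting almost-normal subgroups.
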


\begin{remark}
The free groups in the theorem are in general not finitely generated. 
\end{remark}
\begin{remark} \label{rem:free_follows}
Property {\bf{Free}} of the theorem follows directly from property {\bf{Isom}} when the measure has no atoms since such a measure will always give measure zero to the countable subset of finitely generated subgroups of $F$. 
\end{remark}
\begin{remark} \label{rem:ell}
Sometimes we will use the notation {\bf{$\ell$-Free}} to denote the property {\bf{Free}} in the theorem above for a specific value of $1 \le \ell \le L$, and similarly for the other properties. 
\end{remark}

\noindent An important corollary of the main theorem is:
\begin{corollary} \label{cor:main_irs}
Let $\Gamma < \GL_n(F)$ be a countable linear group. If $\Delta \leftIRS \Gamma$ is an $\IRS$ in $\Gamma$ that is almost surely amenable, then $\Delta < A(\Gamma)$ almost surely. 
\end{corollary}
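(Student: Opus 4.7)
The plan is to invoke the Main Theorem (Theorem 1.2) via a reduction to the case of connected Zariski closure, together with an IRS-level argument to lift the conclusion through the finite index. First, let $G$ be the Zariski closure of $\Gamma$ and set $\Gamma_0 := \Gamma \cap G^0$, where $G^0$ is the identity component; this is a normal finite index subgroup with connected Zariski closure $G^0$. Its amenable radical $A(\Gamma_0)$, being characteristic in $\Gamma_0 \lhd \Gamma$, is a normal amenable subgroup of $\Gamma$, hence $A(\Gamma_0) \subseteq A(\Gamma)$.

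Next I would restrict $\mu$ to $\Gamma_0$, obtaining $\nu := \mu|_{\Gamma_0} \in \IRS(\Gamma_0)$, which is almost surely amenable since subgroups of amenable groups are amenable. Applying the Main Theorem to $\Gamma_0$ produces a partition $\Sub(\Gamma_0) = S_0 \sqcup S_1 \sqcup \cdots \sqcup S_L$ and free subgroups $F_1, \ldots, F_L$. For each $\ell \geq 1$, the \textbf{Free} property would force $\Delta' \cap F_\ell$ to be a non-abelian free group for $\nu_\ell$-almost every $\Delta'$, which is incompatible with the amenability of $\Delta'$. Therefore $\nu$ is concentrated on $S_0$, so by the \textbf{Amm} property $\Delta \cap \Gamma_0 \subseteq A(\Gamma_0) \subseteq A(\Gamma)$ for $\mu$-almost every $\Delta$.

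The main obstacle is then to upgrade ``$\Delta \cap \Gamma_0 \subseteq A(\Gamma)$'' to ``$\Delta \subseteq A(\Gamma)$'', since a single-subgroup argument does not suffice: a finite-index extension of a member of the amenable radical need not lie in it. Here the IRS hypothesis does the real work. Let $\pi \colon \Gamma \to Q := \Gamma/A(\Gamma)$. Because $|\pi(\Delta)|$ divides $[\Delta : \Delta \cap \Gamma_0] \leq [\Gamma : \Gamma_0]$, the pushforward $\pi_* \mu$ is an IRS on $Q$ supported on finite subgroups of bounded order. Invariant probability measures on countable discrete $Q$-orbits exist only when the orbit is finite, so $\pi(\Delta)$ must be almost normal in $Q$ almost surely.

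The proof then concludes with the structural fact that $Q$ has no nontrivial finite almost normal subgroup. If $K < Q$ were such a subgroup, every $k \in K$ would have centralizer of finite index in $Q$ (bounded by $[Q : N_Q(K)] \cdot |\Aut(K)|$), so $K$ would lie in the quasi-center $\QZ(Q)$. But $\QZ(Q)$ is a normal FC-group, hence locally virtually abelian, hence amenable, and therefore contained in $A(Q) = \trivgp$ (the amenable radical of a quotient by the amenable radical is trivial). Therefore $K$, and in particular $\pi(\Delta)$, is trivial almost surely, giving $\Delta \subseteq A(\Gamma)$ almost surely.
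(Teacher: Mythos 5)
Your proof is correct, but it is not the proof the paper actually writes down, so a comparison is in order. The paper's only explicit argument for this corollary is the one in the appendix: it first proves a relative Borel density theorem (a nontrivial IRS of a group with simple, center-free Zariski closure is almost surely Zariski dense) via Furstenberg's lemma on the Grassmannian of the Lie algebra, rules out an amenable Zariski-dense IRS using the Tits alternative together with Wehrfritz's theorems, handles the disconnected/semisimple bookkeeping by showing that the image of the IRS in the semisimple quotient is finite and hence centralized by the connected component, and treats non-finitely-generated $\Gamma$ by a direct limit over finitely generated subgroups combined with the locally essential lemma. You instead carry out the derivation from Theorem \ref{thm:main_irs} that the paper advertises but never spells out: pass to $\Gamma_0 = \Gamma\cap G^0$, restrict the IRS, use \textbf{Free} to force $\nu(S_\ell)=0$ for $\ell\ge 1$ (a non-abelian free subgroup is incompatible with amenability) and \textbf{Amm} to conclude $\Delta\cap\Gamma_0 < A(\Gamma_0)\subseteq A(\Gamma)$ almost surely. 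The finite-index upgrade is the one genuinely non-routine step in this route, and your treatment of it is sound: the image of $\mu$ in $\Gamma/A(\Gamma)$ is an IRS supported on finite subgroups of bounded order, hence almost surely almost normal (the same orbit-counting as in Lemma \ref{lem:f.i.centralizer}), and a nontrivial finite almost normal subgroup would lie in the FC-centre, a normal locally virtually abelian --- hence amenable --- subgroup of $\Gamma/A(\Gamma)$, contradicting the triviality of $A(\Gamma/A(\Gamma))$. The trade-off between the two approaches is clear: your derivation is short but inherits the full projective-dynamical machinery behind Theorem \ref{thm:main_irs}, whereas the appendix proof exists precisely to avoid that machinery.
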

This theorem was vastly generalized in a beautiful paper of Bader, Duchesne and L\'{e}cureux \cite{BDL:amenable_irs} who prove the exact same theorem {\it{without}} the linearity assumption.

In the Appendix (\ref{appendix}), jointly with Tsachik Gelander, we give a short proof for this corollary. This short proof avoids many of the technicalities of the main theorem as well as all of the projective dynamical argument.  

The main Theorem \ref{thm:main_irs} assumes its most appealing form when $A(\Gamma) = \trivgp$ and $L(\Gamma)=1$. As mentioned in the abstract these conditions are satisfied whenever $\Gamma$ is nonamenable and has a simple, center free, Zariski closure. Before stating the theorem in this specific case we note the following:
\begin{remark} \label{rem:ast_ammenable} If $\Gamma < \GL_n(F)$ is not finitely generated and $\chr(F)>0$ then it is possible for an amenable group $\Gamma$ to have a simple Zariski closure. An example is the group $\PSL_n(F)$ when $F$ is a locally finite field, such as the algebraic closure of $\F_p$. This is the reason why many Tits-alternative type proofs restrict themselves to finitely generated groups in positive characteristic. In Theorem \ref{thm:main_irs} we were are able to subsume the amenable case into the statement of the general theorem.  \end{remark}
\begin{definition} \label{def:chiNF}
Given a representation $\chi: \Gamma \arrow \GL_n(\Omega)$ we say that $\mu \in \IRS(\Gamma)$ is {\it{$\chi$ non free}} if $\chi(\Delta) \ne \trivgp$ for $\mu$ almost every $\Delta \in \Sub(\Gamma)$. We denote the collection of all such IRS by 
$$\IRS^{\chiNF}(\Gamma) = \{\mu \in \IRS(\Gamma) \ | \ \mu(\Sub(\ker(\chi))) = 0 \}.$$
\end{definition}
\begin{theorem} \label{thm:one_ell} (Main theorem, simple version)
Let $\Gamma$ be a nonamenable countable group and $\chi:\Gamma \arrow \GL_n(F)$ a linear representation whose image has a simple, center free, Zariski closure. Then there is a group topology $\Me_{\chi}$ on $\Gamma$ such that $\mu$ almost every subgroup $\Delta \in \Sub(\Gamma)$ is open for every $\mu \in \IRS^{\chiNF}(\Gamma)$. Moreover there exists a proper free subgroup $F < \Gamma$ such that the following properties hold for every $\mu \in \IRS^{\chiNF}(\Gamma)$:
\begin{description}
\item[{\bf{$\chi$-Me-Dense}}] $F \cdot \Delta = \Gamma, {\text{ for }} \mu {\text{ almost every }} \Delta \in \Sub(\Gamma)$
\item[{\bf{$\chi$-Non-Disc}}] $F \cap \Delta \ne \trivgp$ for every open subgroup $\Delta \in \Sub(\Gamma)$
\item[{\bf{$\chi$-Isom}}] The restriction map 
\begin{eqnarray*}
R^{\Gamma}_{F}: (\Sub(\Gamma),\mu) & \rightarrow & (\Sub(F),\mu|_{F}) \\
\Delta & \mapsto & \Delta \cap F
\end{eqnarray*} 
is an $F$-invariant isomorphism of probability spaces for every $\mu \in \IRS^{\chiNF}(\Gamma)$. 
\end{description}
\end{theorem}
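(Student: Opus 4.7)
The plan is to deduce Theorem \ref{thm:one_ell} as a specialization of the general Theorem \ref{thm:main_irs} applied to the image $\chi(\Gamma) < \GL_n(F)$. Since the Zariski closure $G$ of $\chi(\Gamma)$ is simple and center-free, it is in particular connected, so the hypotheses of Theorem \ref{thm:main_irs} are satisfied. Every $\mu \in \IRS^{\chiNF}(\Gamma)$ pushes forward under the quotient $\chi$ to $\chi_* \mu \in \IRS(\chi(\Gamma))$ with no atom at $\trivgp$. My plan is to apply Theorem \ref{thm:main_irs} to $\chi(\Gamma)$ to obtain a free subgroup $F' < \chi(\Gamma)$ and a group topology $\Me$ on $\chi(\Gamma)$, and then to lift both objects back to $\Gamma$ via $\chi$.

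The main algebraic step is to show that the integer $L$ associated to $\chi(\Gamma)$ is equal to $1$. Suppose $H_1, H_2 < \chi(\Gamma)/A(\chi(\Gamma))$ were commuting almost normal subgroups; pulling back to $\chi(\Gamma)$, each Zariski closure $\overline{H_i}^Z \subset G$ is normalized by a finite-index subgroup of $\chi(\Gamma)$, which is in turn Zariski-dense in the connected $G$, so $\overline{H_i}^Z$ is normalized by all of $G$. By simplicity of $G$ each $\overline{H_i}^Z \in \{\trivgp, G\}$, and two commuting copies of $G$ would lie inside $Z(G) = \trivgp$, a contradiction. Hence at least one $H_i$ is trivial, verifying the criterion for $L = 1$ at the end of Theorem \ref{thm:main_irs}. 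The same normality argument shows $A(\chi(\Gamma))$ has Zariski closure either trivial or all of $G$; the latter case is the anomalous setting of Remark \ref{rem:ast_ammenable}, which requires a separate ad hoc argument producing a free $F$ inside $\ker(\chi)$ from the nonamenability of $\Gamma$. In the main case $A(\chi(\Gamma)) = \trivgp$, so the $\chiNF$ hypothesis forces $\chi_* \mu$ to be concentrated on the $S_1$ piece of the partition produced by Theorem \ref{thm:main_irs}.

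Finally, I lift $F' < \chi(\Gamma)$ to a free subgroup $F < \Gamma$ of the same rank by choosing preimages of its free generators; the lift is automatically free because a free group admits no nontrivial relations, and $\chi|_F : F \arrow F'$ is a group isomorphism. The topology $\Me_{\chi}$ is defined as the $\chi$-pull-back of $\Me$, so that $\Delta < \Gamma$ is $\Me_{\chi}$-open iff $\chi(\Delta)$ is $\Me$-open; in particular every $\Me_{\chi}$-open subgroup contains $\ker(\chi)$. Each of the four conclusions of Theorem \ref{thm:one_ell} then follows from its $\chi(\Gamma)$-counterpart via the identification $F \cap \Delta = (\chi|_F)^{-1}\bigl(F' \cap \chi(\Delta)\bigr)$ and the restriction $\chi|_\Delta : \Delta \arrow \chi(\Delta)$. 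The hardest part will be \textbf{$\chi$-Me-Dense}: Theorem \ref{thm:main_irs} provides $F' \cdot \chi(\Delta) = \chi(\Gamma)$, which only yields $F \cdot \Delta \cdot \ker(\chi) = \Gamma$, and the upgrade to $F \cdot \Delta = \Gamma$ relies on the design feature that every $\Me_{\chi}$-open subgroup contains $\ker(\chi)$ so that the final $\ker(\chi)$ factor is absorbed into $\Delta$.
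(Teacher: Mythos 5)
Your proposal inverts the paper's logical architecture and, independently of that, the lifting step at its core is false. In the paper, Theorem \ref{thm:main_irs} is \emph{deduced from} Theorem \ref{thm:one_ell} (Subsection \ref{sec:reduction_as}); Theorem \ref{thm:one_ell} itself is proved directly from the Main Technical Theorem \ref{thm:main_tec}, which supplies a cover $\Hg=\{\Theta_p\}$ of $\IRS^{\chiNF}(\Gamma)$ by finitely generated essential subgroups \emph{of $\Gamma$ itself} together with an independent family $f(p,q,\gamma)\in\Theta_q\gamma\Theta_p$; one then sets $F=\langle f(p,q,\gamma)\rangle$ and $\Me_{\chi}=\tau_{\Hg}$. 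So deriving \ref{thm:one_ell} from \ref{thm:main_irs} is circular within this paper. Moreover Theorem \ref{thm:main_irs} contains no topology whatsoever --- the paper explicitly notes that \ref{thm:one_ell} is \emph{stronger} precisely because it produces one --- so the opening clause and {\bf{$\chi$-Non-Disc}} cannot be extracted from it at all.

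Even granting \ref{thm:main_irs} for $\chi(\Gamma)$, the lift does not work. For $f\in F$ one has $\chi(f)\in\chi(\Delta)$ if and only if $f\in\Delta\cdot\ker(\chi)$, so $(\chi|_F)^{-1}\bigl(F'\cap\chi(\Delta)\bigr)=F\cap\bigl(\Delta\ker(\chi)\bigr)$, which strictly contains $F\cap\Delta$ in general; the identity you rely on is false, and the conclusions of \ref{thm:one_ell} about $\Gamma$ are genuinely stronger than their images in $\chi(\Gamma)$. Concretely, let $\Lambda$ be a free group of rank two, $\Gamma=\Lambda\times\Lambda$, $\chi$ the first projection followed by a Zariski-dense faithful representation into $\PGL_2(\R)$, and $\mu=\delta_{\Lambda\times\trivgp}\in\IRS^{\chiNF}(\Gamma)$. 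For a generic lift $F$ of a free subgroup $F'<\chi(\Gamma)$ one has $F\cdot(\Lambda\times\trivgp)\ne\Gamma$ (this requires the second coordinates of $F$ to generate the entire second factor), and $\Delta\mapsto F\cap\Delta$ can at best recover $\chi(\Delta)$, never $\Delta$ itself, so both {\bf{$\chi$-Me-Dense}} and {\bf{$\chi$-Isom}} fail for such an $F$. Your closing appeal to open subgroups containing $\ker(\chi)$ does not repair this, because nothing in the hypothesis $\mu\in\IRS^{\chiNF}(\Gamma)$ forces $\Delta\supset\ker(\chi)$ almost surely --- the example above violates it. The paper's $F$ avoids all of this because its generators are chosen inside the double cosets $\Theta_q\gamma\Theta_p$ of essential subgroups of $\Gamma$ (property (\ref{itm:double_cos}) of Theorem \ref{thm:main_tec}), which is exactly what makes $\gamma\in f\Theta_p(\gamma^{-1}\Theta_q\gamma)\subset F\Delta$ work at the level of $\Gamma$.
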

In the more specific setting in which this theorem is set, it is actually stronger than Theorem \ref{thm:main_irs} in two ways. It proves the existence of a topology and it no longer requires the Zariski closure to be connected. 

The group topology defined above is called the {\it{$\chi$-stabilizer topology}} on $\Gamma$. When $\chi$ is injective, we just refer to the {\it{stabilizer topology}} $\Me$ on $\Gamma$. The actual definition of the topology is quite technical and we postpone it to Subsection  \ref{sec:stab_top}. A special case where $\Gamma = F_r$ is a nonabelian free group is particularly interesting. This case is treated separately in Subsection \ref{sec:free} which gives a {\it{conceptual understanding of the topology, while avoiding the technical complications}}. 

We call any group $F < \Gamma$ satisfying the property {\bf{Me-Dense}} appearing in the theorem {\it{measurably dense}}. Note though that this property is, at least a-priori {\it{weaker than actual density in the stabilizer topology}}. Property {\bf{Non-Disc}} on the other hand is equivalent to the assertion that $e$ is an accumulation point of $F \setminus \{e\}$. In particular it shows that the stabilizer topology is non-discrete. In fact $F$ is a non discrete subgroup of $\Gamma$ in the stabilizer topology.

\subsection{Non-free actions on probability spaces}
In view of Example \ref{eg:pmp} we can restate the main theorems in terms of actions on probability spaces without any reference to IRS. In the main theorem below we choose to pass to a finite index subgroup, instead of restricting to groups with a connected Zariski closure. The proofs are straightforward and are left to the reader. 
\begin{theorem}  \label{thm:main} (Main theorem, p.m.p version)
Let $\Gamma < \GL_n(F)$ be a countable linear group, $A=A(\Gamma)$ its amenable radical. Then there exists a finite index subgroup $\Gamma^0<\Gamma$, a number $L \in \N$ and proper free subgroups $F_1, F_2, \ldots, F_L < \Gamma$ such that for every action, by measure preserving transformations, on a standard Borel probability space, $\Gamma \curvearrowright (X,\Bc,\mu)$ there exists a Borel measurable partition into $\Gamma^0$-invariant subsets:
$$X = X_0 \sqcup X_1 \sqcup X_2 \sqcup \ldots \sqcup X_L,$$  
with the following properties:
\begin{itemize}
\item $\Gamma_x < A$ for almost every $x \in X_0$. 
\item $F_{\ell} \cdot x \supset \Gamma^0 \cdot x$ for every $1 \le \ell \le L$ and almost every $x \in X_{\ell}$. 
\item $F_{\ell} \cap \Gamma_x$ is an infinitely generated free group for every $1 \le \ell \le L$ and almost every $x \in X_{\ell}$.
\item For every $1 \le \ell \le L$ and almost every pair of points $(x,y) \in X_{\ell}^2$ we have that 
$$\left(F_{\ell} \cap \Gamma_x = F_{\ell} \cap \Gamma_y\right) \Rightarrow \left(\Gamma_x = \Gamma_y \right) $$
\end{itemize}
Moreover if $\Gamma/A$ contains no nontrivial elementwise commuting normal subgroups then we can take $L=1$ and $\Gamma^0  = \Gamma$ in the above theorem. 
\end{theorem}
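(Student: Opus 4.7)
The plan is to reduce Theorem~\ref{thm:main} to its IRS counterpart Theorem~\ref{thm:main_irs} via the stabilizer dictionary of Example~\ref{eg:pmp}. First I would select $\Gamma^0 \lhd \Gamma$ as a finite-index normal subgroup whose Zariski closure is connected; this is routine, e.g.\ take the intersection of the preimage of the identity component of $\overline{\Gamma}^{Z}$ with a normal subgroup of finite index in $\Gamma$. Applying Theorem~\ref{thm:main_irs} to $\Gamma^0$ produces the constants $L=L(\Gamma^0)$, the proper free subgroups $F_1,\dots,F_L<\Gamma^0<\Gamma$, and the $\Gamma^0$-invariant Borel partition $\Sub(\Gamma^0)=S_0\sqcup S_1\sqcup\cdots\sqcup S_L$.

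Given a p.m.p.\ action $\Gamma\curvearrowright(X,\Bc,\mu)$, the natural candidate for the partition is the pullback along the $\Gamma^0$-equivariant stabilizer map $\Phi:X\to\Sub(\Gamma^0)$, $\Phi(x)=\Gamma_x\cap\Gamma^0$, namely $X_\ell:=\Phi^{-1}(S_\ell)$. The normalized pushforward $\Phi_{\ast}(\mu|_{X_\ell})$ is precisely the component $\mu_\ell\in\IRS(\Gamma^0)$ of the IRS decomposition. The first three bullets then translate directly: on $X_0$, property \textbf{Amm} says $\Gamma_x\cap\Gamma^0<A(\Gamma^0)$, and since $A(\Gamma^0)$ is a normal amenable subgroup of $\Gamma$ hence lies in $A(\Gamma)$, the $\Gamma$-stabilizer $\Gamma_x$ is a finite extension of a subgroup of $A(\Gamma)$ and thus amenable, so Corollary~\ref{cor:main_irs} applied to the $\Gamma$-IRS supported on $X_0$ upgrades this to $\Gamma_x<A(\Gamma)$ a.e.; property \textbf{Me-Dense}, which says $F_\ell\cdot\Gamma^0_x=\Gamma^0$, immediately yields $F_\ell\cdot x\supset\Gamma^0\cdot x$ since $fg^{-1}\in\Gamma^0_x\subset\Gamma_x$ forces $fx=gx$; and property \textbf{Free} identifies $F_\ell\cap\Gamma_x=F_\ell\cap\Gamma^0\cap\Gamma_x=F_\ell\cap\Gamma^0_x$ as nonabelian infinitely generated free a.e.

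The main obstacle is the fourth bullet. Property \textbf{Isom} for $\mu_\ell\in\IRS(\Gamma^0)$ only gives that $F_\ell\cap\Gamma^0_x=F_\ell\cap\Gamma^0_y$ implies $\Gamma^0_x=\Gamma^0_y$ a.e., whereas we need $\Gamma_x=\Gamma_y$. Since $\Gamma^0_x=\Gamma_x\cap\Gamma^0$ is normal in $\Gamma_x$ with quotient embedding into the finite group $\Gamma/\Gamma^0$, each $\Gamma^0_x$ admits only boundedly many supergroups of the form $\Gamma_x$. I would resolve this by disintegrating the $\Gamma$-stabilizer IRS $\Psi_{\ast}\mu\in\IRS(\Gamma)$, with $\Psi(x)=\Gamma_x$, over $\Phi_{\ast}\mu\in\IRS(\Gamma^0)$ along the finite fibers of the restriction map $R^{\Gamma}_{\Gamma^0}:\Sub(\Gamma)\to\Sub(\Gamma^0)$, and refining each $X_\ell$ measurably by the extra finite datum of the coset $\Gamma_x\Gamma^0/\Gamma^0$ together with the specific lift. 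Because the refining data is bounded independent of the action, these refined pieces can be absorbed back into the $L+1$ classes (by reassigning them using $\Gamma^0$-conjugates of the $F_\ell$, which still satisfy the first three properties), yielding the stabilizer injectivity a.e. without increasing $L$.

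Finally, the last sentence of the theorem (the $L=1$, $\Gamma^0=\Gamma$ case under the elementwise-commuting hypothesis) is inherited from the corresponding clause of Theorem~\ref{thm:main_irs}: connectedness of the Zariski closure is not required when the quotient $\Gamma/A$ already has no commuting almost normal subgroups, so the reduction step to $\Gamma^0$ becomes trivial and the above translation runs verbatim with $\Gamma^0=\Gamma$.
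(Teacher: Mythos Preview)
The paper does not actually prove Theorem~\ref{thm:main}; it declares the translation from Theorem~\ref{thm:main_irs} ``straightforward and left to the reader.'' Your reduction via the $\Gamma^0$-stabilizer map $\Phi(x)=\Gamma_x\cap\Gamma^0$ is exactly the intended route, and the first three bullets go through as you describe.

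The problem is your handling of the fourth bullet. You correctly isolate the gap: \textbf{Isom} applied to $\Gamma^0$ only yields $\Gamma^0_x=\Gamma^0_y$, not $\Gamma_x=\Gamma_y$. But your fix rests on the assertion that the fibers of $R^{\Gamma}_{\Gamma^0}:\Sub(\Gamma)\to\Sub(\Gamma^0)$ are finite, and this is false. Given $H<\Gamma^0$, the subgroups $\Delta<\Gamma$ with $\Delta\cap\Gamma^0=H$ and fixed image $Q$ in $\Gamma/\Gamma^0$ correspond to complements of $\Gamma^0/H$ inside the preimage of $Q$ in $\Gamma/H$; when $\Gamma^0/H$ is infinite there can be infinitely many (already in the infinite dihedral group $\Z\rtimes\Z/2$ over $\Gamma^0=\Z$, every reflection generates a distinct complement). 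So ``the specific lift'' is not bounded data, and the reabsorption into $L+1$ classes via conjugates of the $F_\ell$ cannot work as stated.

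A cleaner route, if you want the fourth bullet literally as written, is to go inside the proof of \textbf{Isom} (Section~\ref{sec:proof_main}) rather than use only its statement. The covering family $\Hg$ and the essential subgroups $\Theta_p$ are built inside $\Gamma^0$, but in Theorem~\ref{thm:main_tec} the elements $f(p,q,\gamma)$ can be constructed for \emph{all} $\gamma\in\Gamma$, not just $\gamma\in\Gamma^0$; the ping-pong construction in Section~5.8 places no constraint on $\gamma$. Taking $F_\ell=\langle f(p,q,\gamma):p,q\in\N,\ \gamma\in\Gamma\rangle<\Gamma$, the injectivity argument for \textbf{Isom} then runs with $\delta\in\Gamma_x\setminus\Gamma_y$ (possibly outside $\Gamma^0$) and produces $f\in F_\ell\cap\delta\Theta\subset(F_\ell\cap\Gamma_x)\setminus(F_\ell\cap\Gamma_y)$. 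This recovers $\Gamma_x=\Gamma_y$ directly.

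Your final paragraph also overreaches: the last clause of Theorem~\ref{thm:main_irs} concerns when $L=1$, not when the connectedness hypothesis can be dropped. Getting $\Gamma^0=\Gamma$ in the $L=1$ case again requires appealing to Theorem~\ref{thm:one_ell} (which does not assume connected Zariski closure) rather than to Theorem~\ref{thm:main_irs} as a black box.
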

Recall from the abstract that an action $\Gamma \curvearrowright (X,\Bc,\mu)$ is almost surely non-free (ASNF) if almost all stabilizers $\Gamma_x$ are non-trivial. 
\begin{theorem} (simple p.m.p version)
Assume that $\Gamma$ is a countable non-amenable linear group with a simple center free Zariski closure. Then there exists a free subgroup $F < \Gamma$ with the property that for every ASNF action of $\Gamma$ the two groups $\Gamma$ and $F$ have almost surely the same orbits. In other words the actions of $\Gamma$ and $F$ are orbit equivalent.
\end{theorem}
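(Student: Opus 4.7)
The plan is to derive this statement as a direct corollary of Theorem \ref{thm:one_ell} (the simple IRS version) by passing through the stabilizer map construction of Example \ref{eg:pmp}. I would not prove anything new here, just translate between the two languages.

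First, I would take $\chi$ in Theorem \ref{thm:one_ell} to be the given faithful linear representation $\Gamma \hookrightarrow \GL_n(F)$ whose image has a simple center-free Zariski closure; since $\chi$ is injective, $\ker(\chi) = \trivgp$, so $\IRS^{\chiNF}(\Gamma) = \IRS^{0}(\Gamma)$. Theorem \ref{thm:one_ell} then produces a proper free subgroup $F < \Gamma$ such that for every $\mu \in \IRS^{0}(\Gamma)$, property \textbf{$\chi$-Me-Dense} holds: $F \cdot \Delta = \Gamma$ for $\mu$-almost every $\Delta \in \Sub(\Gamma)$. Note crucially that this $F$ does \emph{not} depend on $\mu$, so the same $F$ will serve for all ASNF actions simultaneously.

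Next, I would take an arbitrary ASNF action $\Gamma \curvearrowright (X, \Bc, \mu)$ and consider the stabilizer map $\Phi : X \to \Sub(\Gamma)$, $x \mapsto \Gamma_x$. By Example \ref{eg:pmp} the push-forward $\Phi_*\mu$ is an IRS, and since the action is ASNF we have $\Gamma_x \ne \trivgp$ almost surely, so $\Phi_*\mu$ has no atom at $\trivgp$, i.e.\ $\Phi_*\mu \in \IRS^{0}(\Gamma)$. Applying property \textbf{$\chi$-Me-Dense} to $\Phi_*\mu$, there is a $\mu$-conull set $X' \subset X$ on which $F \cdot \Gamma_x = \Gamma$ for every $x \in X'$.

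Finally I would unpack the orbit equivalence. For each $x \in X'$ and each $\gamma \in \Gamma$, write $\gamma = f s$ with $f \in F$ and $s \in \Gamma_x$; then $\gamma \cdot x = f s \cdot x = f \cdot x$, so $\Gamma \cdot x \subseteq F \cdot x$, and the reverse inclusion is trivial since $F < \Gamma$. Hence $F \cdot x = \Gamma \cdot x$ for every $x \in X'$, which is exactly the statement that the $F$- and $\Gamma$-actions on $(X,\mu)$ are orbit equivalent. There is no real obstacle here—the entire content is already packaged into the single clause $F \cdot \Delta = \Gamma$ of Theorem \ref{thm:one_ell}, and the only thing to check carefully is that a single $F$ works uniformly over all ASNF actions, which follows from the quantifier order in Theorem \ref{thm:one_ell} ($F$ is chosen before $\mu$).
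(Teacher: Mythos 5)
Your proposal is correct and is essentially the paper's own argument: the paper declares the p.m.p.\ versions ``straightforward and left to the reader,'' and the Note following the proof of Theorem \ref{thm:one_ell} sketches precisely your derivation (push the measure forward under the stabilizer map of Example \ref{eg:pmp}, apply $\chi$-\textbf{Me-Dense}, and factor $\gamma = fs$ with $s \in \Gamma_x$ to get $\gamma x = fx$). The only detail you gloss over is passing to a $\Gamma$-invariant conull set at the end, which is immediate since $\Gamma$ is countable and $\mu$ is invariant.
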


\begin{corollary} (Actions with amenable stabilizers are free) \label{cor:main}
Let $\Gamma < \GL_n(F)$ be a countable linear group and $\Gamma \curvearrowright (X,\Bc,\mu)$ a measure preserving action on a standard Borel probability space such that $\Gamma_x$ is almost surely amenable. Then $\Gamma_x < A$ almost surely. In particular if $\Gamma$ has a trivial amenable radical every action with amenable stabilizers is  essentially  free.
\end{corollary}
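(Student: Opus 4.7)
The plan is to deduce this corollary directly from the IRS version already stated in Corollary \ref{cor:main_irs}, via the standard dictionary between p.m.p.\ actions and invariant random subgroups recalled in Example \ref{eg:pmp}. The statement essentially just translates the measure-theoretic conclusion about subgroups into an equivalent statement about point stabilizers.

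Concretely, given the p.m.p.\ action $\Gamma \curvearrowright (X,\Bc,\mu)$, I would first consider the stabilizer map
\begin{eqnarray*}
\Phi : X & \arrow & \Sub(\Gamma) \\
x & \mapsto & \Gamma_x,
\end{eqnarray*}
which is Borel measurable (since for each $\gamma \in \Gamma$ the set $\{x : \gamma \in \Gamma_x\}$ is Borel, and the Chabauty topology is generated by such clopen sets) and $\Gamma$-equivariant with respect to the conjugation action on $\Sub(\Gamma)$. The pushforward $\nu := \Phi_* \mu$ is therefore a conjugation-invariant Borel probability measure on $\Sub(\Gamma)$, i.e.\ $\nu \in \IRS(\Gamma)$.

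Next I would use the hypothesis that $\Gamma_x$ is amenable for $\mu$-almost every $x$: by definition of pushforward this says precisely that $\nu$ is supported on the Borel set of amenable subgroups of $\Gamma$. Applying Corollary \ref{cor:main_irs} to $\nu$ yields $\nu(\Sub(A(\Gamma))) = 1$, which pulls back through $\Phi$ to the assertion that $\Gamma_x < A(\Gamma)$ for $\mu$-almost every $x \in X$. The ``in particular'' clause is then immediate: if $A(\Gamma) = \trivgp$, then $\Gamma_x = \trivgp$ almost surely, which is by definition essential freeness of the action.

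There is no real obstacle here beyond the measurability of $\Phi$ and the invocation of Corollary \ref{cor:main_irs}; all the substantive work is packaged into that earlier corollary, whose proof in turn rests on the main Theorem \ref{thm:main_irs}. It is worth noting that one does not need the full strength of Lemma \ref{lem:IRS} (which produces an action from an IRS) for this direction: only the trivial direction, that stabilizers of a p.m.p.\ action form an IRS, is used.
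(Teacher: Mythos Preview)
Your proof is correct and follows exactly the approach the paper intends: the paper explicitly says that the p.m.p.\ versions of the main results, including this corollary, are obtained from the IRS versions ``in view of Example \ref{eg:pmp}'' and that ``the proofs are straightforward and are left to the reader.'' Your argument via the stabilizer map $\Phi$ and pushforward to $\IRS(\Gamma)$ is precisely that translation.
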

As in Corollary \ref{cor:main_irs}, here too the linearity assumption on $\Gamma$ is unnecessary due to the beautiful work of Bader-Duchesne-L\'{e}cureux \cite{BDL:amenable_irs}. 

The non trivial intersection of almost every two instances of (possibly equal) IRS's translates to the following:
\begin{theorem} (Non freeness of product actions)
Let $\Gamma$ be a countable, non-amenable linear group with a simple center free Zariski closure. If $\alpha_i: \Gamma \arrow \Aut(X,\Bc,\mu), \ \ 1 \le i \le M$ are ASNF probability measure preserving actions then so is the product action $\Pi_{i=1}^M: \Gamma \arrow (X^M, \Bc^M, \mu^M)$.
\end{theorem}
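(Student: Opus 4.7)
The plan is to translate the statement into the IRS language of Example \ref{eg:pmp} and then feed it directly into Theorem \ref{thm:one_ell}. For each $1 \le i \le M$, let $\mu_i \in \IRS(\Gamma)$ be the push-forward of $\mu$ under the stabilizer map $x \mapsto \Gamma_x$ for the action $\alpha_i$. The hypothesis that $\alpha_i$ is ASNF is exactly the statement that $\mu_i \in \IRS^0(\Gamma)$. The stabilizer in the diagonal action at $\bar x = (x_1,\ldots,x_M) \in X^M$ is
$$\Gamma_{\bar x} \;=\; \bigcap_{i=1}^M \Gamma_{x_i},$$
so by Fubini the IRS of the product action is the $M$-fold intersection $\mu_1 \cap \mu_2 \cap \ldots \cap \mu_M$ described in Section \ref{sec:ind_res}. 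Hence the theorem reduces to showing that this intersection measure still belongs to $\IRS^0(\Gamma)$, i.e.\ assigns zero mass to $\{\trivgp\}$.

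Now I would invoke Theorem \ref{thm:one_ell} with $\chi$ the given linear embedding (noting that $\IRS^{\chiNF}(\Gamma) \supseteq \IRS^{0}(\Gamma)$ as soon as $\chi$ is faithful) to obtain the stabilizer topology $\Me_\chi$ on $\Gamma$ together with the proper free subgroup $F < \Gamma$. The theorem guarantees that for each $i$ the set
$$\mathcal{O} = \{\Delta \in \Sub(\Gamma) \mid \Delta \text{ is open in } \Me_\chi\}$$
has full $\mu_i$-measure, so the set of tuples $(\Delta_1,\ldots,\Delta_M)$ with all $\Delta_i$ open has full measure for $\mu_1 \times \cdots \times \mu_M$. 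The key elementary fact I would then use is that in any topological group a finite intersection of open subgroups is again an open subgroup (each is a neighborhood of $e$ and the group axioms are preserved by intersection). Consequently $\Delta_1 \cap \cdots \cap \Delta_M$ is open in $\Me_\chi$ almost surely. Property \textbf{$\chi$-Non-Disc} of Theorem \ref{thm:one_ell} now applies to this intersection and yields $F \cap (\Delta_1 \cap \cdots \cap \Delta_M) \ne \trivgp$, which in particular forces $\Delta_1 \cap \cdots \cap \Delta_M \ne \trivgp$. This is precisely the desired conclusion $\mu_1 \cap \cdots \cap \mu_M \in \IRS^0(\Gamma)$.

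The main obstacle is of course packed into the one-line invocation of Theorem \ref{thm:one_ell}; all of the hard work of building $\Me_\chi$ and $F$ and verifying openness of $\mu$-random subgroups happens there. Once that machinery is available, the deduction for products is essentially formal. The crucial conceptual point is that mere nontriviality of subgroups is \emph{not} preserved under finite intersection, but openness in a group topology \emph{is}; routing the argument through the stabilizer topology is what converts ``stabilizers are nontrivial almost surely'' into the much more robust property ``stabilizers are open almost surely,'' and this is why Theorem \ref{thm:one_ell} gives the product statement for free.
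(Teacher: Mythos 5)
Your proof is correct and follows essentially the same route the paper takes: the paper leaves the p.m.p.\ versions to the reader, but inside the proof of Theorem \ref{thm:one_ell} it establishes exactly your key step (for $M=2$), namely that $\mu_i$-almost every subgroup is open in $\Me_\chi$, that a finite intersection of open subgroups is open, and that \textbf{$\chi$-Non-Disc} then forces the intersection to be nontrivial. The only point worth making explicit is the identification $\IRS^{\chiNF}(\Gamma)=\IRS^{0}(\Gamma)$ for faithful $\chi$, which you do.
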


We conclude the introduction by mentioning two places where the results of this paper have been used.  In a beautiful paper \cite[Corollary 5.11]{TD:shift_minimal} Robin Tucker-Drob shows that for a countable linear group $\Gamma$ the condition $A(\Gamma) = \trivgp$ is equivalent to the fact that any probability measure preserving action of $\Gamma$ that is weakly contained in the Bernoulli shift $\Gamma \curvearrowright [0,1]^{\Gamma}$ is actually weakly equivalent to the Bernoulli. This last property is what Tucker-Drob refers to as the {\it{shift minimality}} of the group $\Gamma$ which has many other equivalent formulations \cite[Proposition 3.2]{TD:shift_minimal}. 

Corollary \ref{cor:main_irs} was also used in \cite[Theorem 5]{AGV:Kesten_IRS} to show that if $\Gamma$ is a finitely generated linear group with $A(\Gamma) = \trivgp$ and $X_n$ is a sequence of Schreier graphs for which the spectral radius of the random walk on $L_2^{0}(X_n)$ converges to the spectral radius of $\Gamma$ then $X_n$ also converge to $\Gamma$ in the sense of Benjamini-Schramm. Both of these results are now valid for {\it{general countable groups}}, due to the Bader-Duchesne-L\'{e}cureux Theorem \cite{BDL:amenable_irs}.

\section{Essential subgroups and recurrence}
\label{sec:essential}
\subsection{Essential and locally essential subgroups}
The definition of locally essential subgroups, and the lemma that follows are, in my opinion, the most important new tools introduced in this paper. These notions are specific to the theory of IRS in countable groups and so far they do not have satisfactory generalizations to the locally compact case. 
\begin{definition}
Let $\mu \in \IRS(\Gamma)$, a subgroup $\Sigma < \Gamma$ is called $\mu$-{\it{essential}} if $\mu(\Env(\Sigma)) > 0.$ In words $\Sigma$ is $\mu$-essential if there is a positive probability that a $\mu$-random subgroup contains $\Sigma$. A subgroup is called {\it{essential}} if it is $\mu$-essential for some $\mu \in \IRS(\Gamma)$. We denote the collections of $\mu$-essential subgroups and of essential subgroups by
$$\Eg(\mu) := \left \{ \Sigma \in \Sub(\Gamma) \ | \ \mu(\Env(\Sigma)) > 0 \right \},\qquad \qquad \Eg(\Gamma) = \bigcup_{\mu \in \IRS(\Gamma)} \Eg(\mu).$$ 
 An element $\gamma \in \Gamma$ is essential essential if the cyclic group that it generates is essential. Note that the essential elements and subgroups of $\Gamma$ associated with an IRS are {\it{deterministic}} objects.
\end{definition}

Recall that a group $\Gamma$ is said to {\it{satisfy a property locally,}} if this property holds for every finitely generated subgroup of $\Gamma$. In particular this terminology is used in the following: 
\begin{definition}
Given a countable group $\Gamma$ and $\mu \in \IRS(\Gamma)$ we say that $\Delta \in \Sub(\Gamma)$ is $\mu$-{\it{locally essential}} if every finitely generated subgroup of $\Delta$ is $\mu$-essential. We say that $\Delta$ is {\it{locally essential}} if it is $\mu$-locally essential for some $\mu \in \IRS(\Gamma)$. 
\end{definition}
\begin{lemma} \label{lem:loc_ess} (The locally essential lemma)
Let $\Gamma$ be a countable group and $\mu \in \IRS(\Gamma)$. Then $\mu$-almost every $\Delta \in \Sub(\Gamma)$ is locally essential. 
\end{lemma}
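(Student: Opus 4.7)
The plan is to reduce the statement to a countable union of $\mu$-null sets using the definitions directly. The key observation is that, although $\Sub(\Gamma)$ is in general uncountable, the collection of \emph{finitely generated} subgroups of a countable group $\Gamma$ is itself countable (each is determined by a finite subset of the countable set $\Gamma$). Writing $\mathrm{Sub}_{\fg}(\Gamma)$ for this countable family, I would work with the sub-collection
\[
\mathcal{N} \;=\; \{\Sigma \in \mathrm{Sub}_{\fg}(\Gamma) \;:\; \mu(\Env(\Sigma)) = 0\},
\]
i.e., the finitely generated subgroups which fail to be $\mu$-essential.

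Next I would unwind the two definitions to describe the exceptional set concretely. By definition, $\Delta \in \Sub(\Gamma)$ fails to be $\mu$-locally essential precisely when some finitely generated $\Sigma \le \Delta$ satisfies $\mu(\Env(\Sigma)) = 0$; that is to say, precisely when $\Sigma \in \mathcal{N}$ and $\Delta \in \Env(\Sigma)$. Therefore
\[
\{\Delta \in \Sub(\Gamma) \;:\; \Delta \text{ is not } \mu\text{-locally essential}\} \;=\; \bigcup_{\Sigma \in \mathcal{N}} \Env(\Sigma).
\]

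The proof concludes by observing that this is a countable union of $\mu$-null sets: each $\Env(\Sigma)$ with $\Sigma \in \mathcal{N}$ has $\mu$-measure zero by the definition of $\mathcal{N}$, and $\mathcal{N}$ is countable as a subfamily of $\mathrm{Sub}_{\fg}(\Gamma)$. Hence the exceptional set has $\mu$-measure zero, as required. There is no real obstacle here; the only thing worth spelling out is the set-theoretic unpacking of ``every finitely generated subgroup of $\Delta$'' into a countable indexing by f.g.\ subgroups of the \emph{ambient} group $\Gamma$ (together with the containment $\Sigma \le \Delta$, which is exactly the condition $\Delta \in \Env(\Sigma)$). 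Note that $\Gamma$-invariance of $\mu$ is not even needed for this lemma; only countable additivity and the countability of $\mathrm{Sub}_{\fg}(\Gamma)$ are used.
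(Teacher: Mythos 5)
Your proposal is correct and is essentially identical to the paper's own proof: both enumerate the (countably many) finitely generated subgroups of $\Gamma$ that fail to be $\mu$-essential and observe that the set of non--locally-essential $\Delta$ is exactly the union of their envelopes, a countable union of nullsets. Your added remark that invariance of $\mu$ is not used is accurate but does not change the argument.
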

\begin{proof}
 Let $H_1,H_2,\ldots \in \Sub(\Gamma)$ be an enumeration of the finitely generated subgroups of $\Gamma$ that are not $\mu$-essential. Then by definition $\mu(\Env(H_i)) = 0$ for every $i$, but
$$\left\{\Delta \in \Sub(\Gamma) \ | \ \Delta {\text{ is not locally essential}} \right\} = \bigcup_{i} \Env(H_i),$$
which has measure zero as a countable union of nullsets. 
\end{proof}
\begin{definition} \label{def:Rec}
We will say that a subgroup $\Delta \in \Sub(\Gamma)$ is {\it{recurrent}} if for every finitely generated subgroup $\Sigma < \Delta$ and for every group element $\gamma \in \Gamma$ the corresponding set of return times 
\begin{eqnarray*}
N(\Delta, \Env(\Sigma), \gamma) & = & \left \{n \in \N \ | \ \gamma^n \Delta \gamma^{-n} \in \Env(\Sigma) \right\} \\
& = &  \left \{n \in \N \ | \ \gamma^{-n} \Sigma \gamma^{n} < \Delta \right\} 
\end{eqnarray*}
is infinite. We will say that $\Delta$ is {\it{locally essential recurrent}} if it is both locally essential and recurrent. We will denote the collection of recurrent subgroups by $\Rec(\Gamma)$ and the collection of nontrivial recurrent subgroups by $\Rec^{0}(\Gamma) = \Rec(\Gamma) \setminus \{ \trivgp\}$. 
\end{definition}


\begin{corollary} \label{cor:loc_rec}
Let $\Gamma$ be a countable group and $\mu \in \IRS(\Gamma)$. Then $\mu$-almost every $\Delta \in \Sub(\Gamma)$ is recurrent. 
\end{corollary}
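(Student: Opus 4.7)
The plan is to combine the locally essential lemma with classical Poincar\'e recurrence applied to the conjugation action of each cyclic subgroup of $\Gamma$ on the probability space $(\Sub(\Gamma),\mu)$.

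First, I would observe that for any fixed $\gamma \in \Gamma$ the map
$T_{\gamma}: \Sub(\Gamma) \arrow \Sub(\Gamma)$, $\Delta \mapsto \gamma \Delta \gamma^{-1}$, is a $\mu$-preserving Borel automorphism, since $\mu$ is $\Gamma$-invariant. Hence the Poincar\'e recurrence theorem applies: for any measurable $B \subset \Sub(\Gamma)$ with $\mu(B)>0$, $\mu$-almost every $\Delta \in B$ satisfies $T_{\gamma}^{n}(\Delta) \in B$ for infinitely many $n \in \N$.

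Next I would apply this to the specific measurable sets that appear in the definition of recurrence. For each finitely generated $\Sigma \in \Eg(\mu)$ and each $\gamma \in \Gamma$, the envelope $\Env(\Sigma)$ is a Borel set of positive $\mu$-measure, so there is a $\mu$-conull subset $R_{\Sigma,\gamma}\subset \Sub(\Gamma)$ consisting of those $\Delta$ for which either $\Delta \notin \Env(\Sigma)$, or $\Delta \in \Env(\Sigma)$ and $T_{\gamma}^{n}(\Delta) \in \Env(\Sigma)$ for infinitely many $n$. Unwinding the definition of $\Env$, the latter condition is exactly
\[
\#\{ n \in \N \ | \ \gamma^{-n}\Sigma\gamma^{n} < \Delta\} = \infty.
\]
The collection of pairs $(\Sigma,\gamma)$ with $\Sigma$ finitely generated and $\Sigma \in \Eg(\mu)$ is countable, so
\[
R := \bigcap_{\Sigma,\gamma} R_{\Sigma,\gamma}
\]
is still $\mu$-conull.

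Finally I would intersect $R$ with the conull set $E$ of $\mu$-locally essential subgroups supplied by Lemma~\ref{lem:loc_ess}. For $\Delta \in R \cap E$ and any finitely generated $\Sigma < \Delta$, local essentiality gives $\Sigma \in \Eg(\mu)$, and then $\Delta \in \Env(\Sigma)$ together with $\Delta \in R_{\Sigma,\gamma}$ yields the infinite set of return times required by Definition~\ref{def:Rec}. There is no real obstacle here; the only thing to be careful about is the double quantification ``for every $\Sigma$ and every $\gamma$'', which is handled precisely because the pairs $(\Sigma,\gamma)$ range over a countable set so that countable additivity of the null ideal suffices.
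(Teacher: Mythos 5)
Your proposal is correct and follows essentially the same route as the paper: reduce to countably many pairs $(\Sigma,\gamma)$, apply Poincar\'e recurrence to $\Env(\Sigma)$ when $\Sigma$ is $\mu$-essential, and dispose of the remaining pairs because their envelopes are null (you package this last step via Lemma~\ref{lem:loc_ess}, while the paper notes it directly, but the mechanism is identical).
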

\begin{proof}
This basically follows directly from the locally essential lemma combined with Poincar\'{e} recurrence. We elaborate:

Fix $\mu \in \IRS(\Gamma)$. For a finitely generated subgroup $\Sigma \in \Sub(\Gamma)$ and a group element $\gamma \in \Gamma$ we denote by 
$$\Xi(\Sigma, \gamma) = \{\Delta \in \Env(\Sigma) \ | \ N(\Env(\Sigma),\Delta, \gamma) {\text{ is finite}}\},$$
the set of subgroups that fail to be recurrent due to this specific pair $(\Sigma, \gamma)$. Clearly $\Rec(\Gamma) = \Sub(\Gamma) \setminus \left( \bigcup \Xi(\Sigma, \gamma) \right)$ where the union ranges over all finitely generated subgroups $\Sigma$ and over all elements $\gamma \in \Gamma$. Since there are only countably many pairs $(\Sigma,\gamma) \in \Sub^{f.g.}(\Gamma) \times \Gamma$, it is enough to fix such a pair and show that $\mu(\Xi(\Sigma,\gamma))=0$. This is clear when $\Sigma$ fails to be $\mu$-essential. When $\mu(\Env(\Sigma))>0$ it follows from Poincar\'{e} recurrence. 
\end{proof}

\subsection{The stabilizer topology on a free group}
\label{sec:free}
In this section we sketch the existence proof of the stabilizer topology when $\Gamma = F_r$ is a nonabelian free group. This illustrates the main geometric idea of the argument while avoiding many of the technical details of the main proof. It will also illustrate the importance of the locally essential lemma. 
\begin{theorem} \label{thm:commutator}
Let $\Gamma$ be a countable group and $\Delta_1,\Delta_2 \in \Sub(\Gamma)$ two recurrent subgroups. Then for every pair of elements $\delta_i \in \Delta_i$
$$[\delta_1^{n_1},\delta_2^{n_2}] \in \Delta_1 \cap \Delta_2,$$
for infinitely many pairs $(n_1,n_2)\in \Z^2$. 
\end{theorem}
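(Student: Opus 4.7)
The plan is to use the recurrence hypothesis separately for each $\Delta_i$, with the cyclic subgroup $\langle \delta_i \rangle$ in the role of $\Sigma$ and the other element $\delta_j$ ($j \ne i$) in the role of the conjugating element $\gamma$. Once the quantifiers are lined up properly, the result falls out of a direct commutator identity.

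First I expand
$$[\delta_1^{n_1}, \delta_2^{n_2}] \;=\; \delta_1^{n_1}\bigl(\delta_2^{n_2}\delta_1^{-n_1}\delta_2^{-n_2}\bigr) \;=\; \bigl(\delta_1^{n_1}\delta_2^{n_2}\delta_1^{-n_1}\bigr)\delta_2^{-n_2}.$$
Since $\delta_1^{n_1}\in\Delta_1$, the first factorization shows that $[\delta_1^{n_1},\delta_2^{n_2}]\in\Delta_1$ as soon as the conjugate $\delta_2^{n_2}\delta_1^{-n_1}\delta_2^{-n_2}$ lies in $\Delta_1$. Since $\delta_2^{-n_2}\in\Delta_2$, the second factorization shows that $[\delta_1^{n_1},\delta_2^{n_2}]\in\Delta_2$ as soon as the conjugate $\delta_1^{n_1}\delta_2^{n_2}\delta_1^{-n_1}$ lies in $\Delta_2$. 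Thus the problem reduces to arranging both conjugation conditions simultaneously.

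Second, I apply the recurrence of $\Delta_1$ to the finitely generated subgroup $\Sigma = \langle\delta_1\rangle \subset \Delta_1$ and to the element $\gamma = \delta_2^{-1}\in\Gamma$. By Definition~\ref{def:Rec}, the set
$$N_2 \;:=\; \bigl\{n_2\in\N \;:\; \delta_2^{n_2}\langle\delta_1\rangle\delta_2^{-n_2}<\Delta_1\bigr\}$$
is infinite, and for every $n_2\in N_2$ and every $n_1\in\Z$ the element $\delta_2^{n_2}\delta_1^{-n_1}\delta_2^{-n_2}$ lies in $\Delta_1$. Symmetrically, applying recurrence of $\Delta_2$ with $\Sigma=\langle\delta_2\rangle$ and $\gamma=\delta_1^{-1}$ yields an infinite set
$$N_1 \;:=\; \bigl\{n_1\in\N \;:\; \delta_1^{n_1}\langle\delta_2\rangle\delta_1^{-n_1}<\Delta_2\bigr\},$$
and for every $n_1\in N_1$ and every $n_2\in\Z$ the element $\delta_1^{n_1}\delta_2^{n_2}\delta_1^{-n_1}$ lies in $\Delta_2$.

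Taking any $(n_1,n_2)\in N_1\times N_2$ therefore gives $[\delta_1^{n_1},\delta_2^{n_2}]\in\Delta_1\cap\Delta_2$, and $N_1\times N_2\subset\N^2\subset\Z^2$ is infinite. There is no genuine obstacle: the content is just the commutator identity above combined with a careful bookkeeping of which side of each conjugation is fed into which recurrence hypothesis. The minor subtlety worth watching is that the two factorizations of the commutator produce conjugation by $\delta_2^{n_2}$ on the $\Delta_1$ side but by $\delta_1^{n_1}$ on the $\Delta_2$ side, which is precisely why one applies recurrence with $\gamma=\delta_2^{-1}$ for $\Delta_1$ and $\gamma=\delta_1^{-1}$ for $\Delta_2$.
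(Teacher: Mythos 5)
Your proof is correct and follows essentially the same route as the paper: apply the recurrence of each $\Delta_i$ to the cyclic group generated by the other's element, observe that the resulting containment holds for \emph{all} powers on one side and infinitely many on the other, and intersect. The paper's version is just a terser statement of the same argument; your explicit identification of the two factorizations of the commutator and the product set $N_1\times N_2$ fills in the bookkeeping it leaves implicit.
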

\begin{proof}
By recurrence, applied to the cyclic subgroup $\langle \delta_2 \rangle < \Delta_2$ we know that $\delta_1^{n_1} \delta_2 \delta_1^{-n_1} \in \Delta_2$ infinitely often and hence $[\delta_1^{n_1},\delta_2^{n_2}] \in \Delta_2$ for {\it{every}} $n_2$ and infinitely many values of $n_1$. An identical argument shows that $[\delta_1^{n_1},\delta_2^{n_2}] \in \Delta_1$ for every $n_1$ and infinitely many values of $n_2$ and the claim follows. 
\end{proof}
Recall that elements in a group $\Gamma$ are called {\it{independent}} if they freely generate a free group. 
\begin{lemma} \label{lem:independent}
Let $\{\Delta_1, \ldots, \Delta_J\} \subset \Sub(F_r)$ be non-abelian subgroups of the free group $F_r$. Then we can find $J$ independent elements $\delta_i \in \Delta_i$.
\end{lemma}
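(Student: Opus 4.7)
The plan is to use the action of $F_r$ on its Cayley tree $T$. Every non-trivial element $\delta \in F_r$ acts as a hyperbolic automorphism of $T$ with a pair of fixed endpoints $\{\delta^{+\infty}, \delta^{-\infty}\} \subset \partial T$, and two such elements commute if and only if they share this endpoint-pair, equivalently if and only if they lie in a common cyclic subgroup. By Nielsen–Schreier each non-abelian $\Delta_i$ is itself a free group of rank at least two, and hence contains infinitely many pairwise non-commuting maximal cyclic subgroups; for example, inside any $\langle a,b\rangle < \Delta_i$, the primitive elements $ab^n$ for $n \in \Z$ have pairwise distinct endpoint-pairs on $\partial T$.

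First I would pick $\delta_i \in \Delta_i$ inductively so that the $2J$ points $\{\delta_i^{\pm\infty}\}_{i=1}^J$ are all distinct in $\partial T$. Having already selected $\delta_1,\dots,\delta_{i-1}$, the subgroup $\Delta_i$ realizes infinitely many distinct endpoint-pairs, so we can pick $\delta_i \in \Delta_i$ whose two fixed endpoints avoid the finitely many points $\delta_1^{\pm\infty},\dots,\delta_{i-1}^{\pm\infty}$ already used.

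Next I would invoke the ping-pong lemma on $\partial T$. Choose pairwise disjoint clopen neighborhoods $U_i^\pm \subset \partial T$ of $\delta_i^{\pm\infty}$. The north-south dynamics of a hyperbolic tree isometry on its boundary yields, for each $i$ and all sufficiently large $n_i \geq 1$,
\[ \delta_i^{n_i}\bigl(\partial T \setminus U_i^-\bigr) \subset U_i^+, \qquad \delta_i^{-n_i}\bigl(\partial T \setminus U_i^+\bigr) \subset U_i^-. \]
Taking the $n_i$ simultaneously large, the standard ping-pong criterion applied to the disjoint sets $U_1^\pm,\dots,U_J^\pm$ implies that $\delta_1^{n_1},\dots,\delta_J^{n_J}$ freely generate a free group of rank $J$. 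Since $\delta_i^{n_i} \in \Delta_i$, replacing each $\delta_i$ by $\delta_i^{n_i}$ produces the required independent $J$-tuple.

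The main obstacle is really just the selection step: one must ensure, as elements are picked out of each $\Delta_i$ in turn, that their endpoint-pairs can be kept pairwise disjoint from those already chosen. This reduces to the elementary fact that a non-abelian free group has infinitely many distinct maximal cyclic subgroups, so only finitely many endpoints ever need to be avoided. Everything after that is a routine application of ping-pong to the boundary action of the Cayley tree.
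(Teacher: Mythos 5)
Your argument is correct and is essentially the paper's own proof: both select elements of the $\Delta_i$ whose axes in the Cayley tree have pairwise distinct endpoint-pairs (equivalently, pairwise non-commuting elements, using that a non-abelian free group is not a finite union of cyclic subgroups) and then apply the standard ping-pong lemma to sufficiently high powers. No gaps.
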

\begin{proof}
In a free group two elements commute if and only if they are contained in a cyclic subgroup. A nonabelian free group is clearly not a union of finitely many cyclic subgroups, so by induction we find a sequence of elements $\delta'_i \in \Delta_i$ that are pairwise non-commuting. Fix a free generating set $F_r = \langle S \rangle$ and consider the action of $F_r$ on its Cayley tree $T= \mathrm{Cay}(F_r,S)$. If we write $\delta'_i = \eta_i \theta_i \eta_i^{-1}$ where $\theta_i$ is cyclicly reduced then $\delta_i'$ acts on $T$ as a hyperbolic tree automorphism whose axis is the bi-infinite geodesic labeled periodically $\ldots \theta_i \theta_i \theta_i \ldots$ and passing through $\eta_i$. In particular these $J$ axes have $2J$ distinct end points in $\partial T$. Now by the standard ping-pong lemma we can find integers $n_j$ such that $\left \{ \delta_j = (\delta'_j)^{n_j} \ | \ 1 \le j \le J \right  \}$ are independent. Note that the groups $\Delta_j$ may very well be identical this does not change the details of the above proof.  
\end{proof}

Recall that a collection of subgroups $\Base \subset \Sub(\Gamma)$ forms a basis of identity neighborhoods of a topology if it satisfies the following properties (i) It is conjugation invariant (ii) It is a filter base, namely for every $\Delta_1,\Delta_2 \in \Base$ there exists some $\Delta_3 \in \Base$ with $\Delta_3 < \Delta_1 \cap \Delta_2$ (see  \cite[Proposition 1, TGIII.3]{Bourbaki:GT}). A collection of subgroups $\SB$ forms a sub-basis of identity neighborhoods if it is closed under conjugation. Such a sub-basis yields a basis consisting of all possible intersections of finitely many subgroups taken from $\SB$. The resulting topology will be discrete if and only if there are subgroups $\left \{ \Delta_1,\Delta_2, \ldots, \Delta_M \right \} \subset \SB$ with $\cap_{i=1}^M \Delta_i = \trivgp$. 
\begin{theorem} \label{thm:baby_case}
Let $F_r$ be a nonabelian free group. Then $\Rec^{0}(F_r)$ the nontrivial recurrent subgroups form a sub-basis of identity neighborhoods for a non-discrete group topology. This topology has the property that for every $\mu \in \IRS(F_r)$, $\mu$-almost every $\trivgp \ne \Delta \in \Sub(\Gamma)$ is open. \end{theorem}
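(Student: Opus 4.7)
The plan is to verify three components: conjugation invariance of the sub-basis (so that $\Rec^0(F_r)$ generates a group topology), non-discreteness of that topology, and openness of $\mu$-almost every nontrivial subgroup. The first and third are short; non-discreteness is the substantive content.

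Conjugation invariance of $\Rec^0(F_r)$ is a direct manipulation of Definition~\ref{def:Rec}: a pair $(\Sigma,\gamma)$ purportedly witnessing that $g\Delta g^{-1}$ fails recurrence can be transported by $g$ back to a pair $(g^{-1}\Sigma g,\,g^{-1}\gamma g)$ witnessing failure for $\Delta$ itself. For the openness clause, fix $\mu\in\IRS(F_r)$. By Corollary~\ref{cor:loc_rec}, $\mu$-almost every $\Delta\in\Sub(F_r)$ is recurrent; any such $\Delta\ne\trivgp$ lies in $\Rec^0(F_r)$, hence is itself a basic identity neighborhood and therefore open as a subgroup.

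For non-discreteness I would show $\bigcap_{i=1}^M\Delta_i\ne\trivgp$ for any finite collection $\Delta_1,\ldots,\Delta_M\in\Rec^0(F_r)$. First I would note that each $\Delta_i$ is non-abelian: a nontrivial cyclic $\langle\gamma\rangle\le F_r$ has normalizer equal to its cyclic centralizer (the maximal root of $\gamma$), and translation-length considerations on the Cayley tree force $g^{-n}\gamma g^n\notin\langle\gamma\rangle$ for $n\ne0$ whenever $g$ lies outside that centralizer, contradicting recurrence. Applying Lemma~\ref{lem:independent} I pick independent elements $\delta_i\in\Delta_i$, so $\langle\delta_1,\ldots,\delta_M\rangle$ is a free subgroup of rank $M$. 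Starting from $\xi_2:=[\delta_1^{n_1},\delta_2^{n_2}]$, which by Theorem~\ref{thm:commutator} lies in $\Delta_1\cap\Delta_2$ for pairs in an infinite Cartesian product of recurrence sets and is nontrivial by independence, I would recursively set $\xi_{k+1}:=[\xi_k^{p_{k+1}},\delta_{k+1}^{q_{k+1}}]$, aiming at $\xi_M\in\bigcap_i\Delta_i$. Nontriviality propagates automatically, since each $\xi_k$ is a reduced nontrivial word in $\langle\delta_1,\ldots,\delta_M\rangle$.

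The step I expect to be the main obstacle, and the reason the locally essential lemma is highlighted as indispensable, is the simultaneous verification of $\xi_{k+1}\in\Delta_j$ for every $j\le k$ at a common exponent $q_{k+1}$. Recurrence of each individual $\Delta_j$ applied to $\langle\xi_k\rangle<\Delta_j$ and $\delta_{k+1}$ yields an infinite set $Q_j=\{q:\delta_{k+1}^q\xi_k\delta_{k+1}^{-q}\in\Delta_j\}$, but finitely many infinite subsets of $\N$ need not share a common element, so pointwise recurrence is insufficient. The sketch bridges this gap by invoking Lemma~\ref{lem:loc_ess}: one arranges the iteratively constructed $\langle\xi_k\rangle$ to be $\mu$-essential for a single $\mu\in\IRS(F_r)$ under which all the $\Delta_j$ are typical, and then Poincar\'e recurrence of the cyclic $\langle\delta_{k+1}\rangle$-action on $(\Sub(F_r),\mu)$, applied to the positive-measure set of subgroups containing $\xi_k$ and lying inside $\bigcap_j\Delta_j$, produces a single $q_{k+1}\in\bigcap_jQ_j$. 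Aligning return times via this locally essential structure is the crux of the argument; once it is in place, the rest of the induction is formal.
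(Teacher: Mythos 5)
Your reduction of the theorem to the claim $\bigcap_{i=1}^M\Delta_i\ne\trivgp$, the conjugation-invariance and openness parts, and the observation that nontrivial recurrent subgroups of $F_r$ are non-abelian (so that Lemma \ref{lem:independent} applies) are all fine, and you have correctly located the crux: in the iteration $\xi_{k+1}=[\xi_k^{p},\delta_{k+1}^{q}]$, membership in each $\Delta_j$ with $j\le k$ constrains the single exponent $q$ to an infinite set $Q_j\subset\N$, and finitely many infinite sets need not meet. But your proposed repair does not work. The $\Delta_j$ in this statement are \emph{arbitrary} elements of $\Rec^{0}(F_r)$; recurrence is a deterministic property from Definition \ref{def:Rec} and comes with no measure attached. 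There is no given $\mu\in\IRS(F_r)$ for which ``all the $\Delta_j$ are typical,'' no reason $\langle\xi_k\rangle$ should be $\mu$-essential for any common $\mu$, and no reason the set of subgroups containing $\xi_k$ and contained in $\bigcap_j\Delta_j$ should have positive measure for any IRS --- so Lemma \ref{lem:loc_ess} and Poincar\'e recurrence cannot be invoked to produce a common return time. (In the paper the locally essential lemma enters only through Corollary \ref{cor:loc_rec}, i.e., in showing that $\mu$-almost every subgroup is recurrent and hence open; it plays no role in the intersection argument.)

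The paper sidesteps the alignment problem combinatorially, with a two-sided induction. One strengthens the induction hypothesis to produce $w\in\bigcap_{j=1}^{J-1}\Delta_j$ which, as a reduced word in the independent generators, has the form $\delta_1 w'\delta_{J-1}^{-1}$; running the same induction over $\Delta_J,\Delta_{J-1},\ldots,\Delta_2$ gives $u\in\bigcap_{j=2}^{J}\Delta_j$ of the form $\delta_J u'\delta_2^{-1}$. The commutator $v=[w^{n},u^{m}]$ then lies in $\Delta_j$ for all $2\le j\le J-1$ \emph{for every} pair $(n,m)$, since both $w$ and $u$ already lie in those groups; only $\Delta_1$ and $\Delta_J$ require recurrence, and that is exactly the two-group statement of Theorem \ref{thm:commutator}, where one condition constrains only $n$ and the other only $m$, so no intersection of return-time sets is ever needed. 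The prescribed first and last letters of $w$ and $u$ guarantee that $v=\delta_1 v'\delta_J^{-1}$ is reduced, hence nontrivial, and keep the induction going. As written, the step producing a single $q_{k+1}\in\bigcap_{j\le k}Q_j$ is a genuine gap, and some such restructuring is needed to close it.
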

\begin{proof}
The last statement follows directly from Corollary \ref{cor:loc_rec}, so all we have to prove is that given 
$\{\Delta_j \ | \ 1 \le j \le J\} \subset \Rec^{0}$ the intersection $\bigcap_{j=1}^J \Delta_j \ne \trivgp$. 

Recall that by Definition \ref{def:Rec} the groups $\Delta_j$ themselves are not allowed to be trivial. Let $\delta_j \in \Delta_j$ be independent elements as in Lemma \ref{lem:independent}. We assume by induction on $J$ that we have already found an element $w \in \bigcap_{j=1}^{J-1} \Delta_j$. We include in the induction hypothesis the requirement that $w \in \langle \delta_1,\delta_2, \ldots, \ldots, \delta_{J-1} \rangle$, and that when it is written as a reduced word in these free generators it assumes the form $w = \delta_1 w' \delta_{J-1}^{-1}$. Applying the induction hypothesis to $\Delta_J, \Delta_{J-1},\ldots, \Delta_2$ in this order we obtain another element $u \in \bigcap_{j=2}^{J} \Delta_j$ which can be written as a reduced word in $\delta_2,\ldots, \delta_J$ assuming the form $u = \delta_J u' \delta_2^{-1}$.

Now we can find $n,m \in \Z$ such that 
$$v:=[w^{n},u^{m}] \in \cap_{j =1}^J \Delta_j.$$
This element is automatically contained in $\Delta_j$ for every $2 \le j \le J-1$ and by Theorem \ref{thm:commutator} $(m,n)$ can be chosen in such a way that $v \in \Delta_1 \cap \Delta_J$ as well. Finally it is clear that $v \in \langle \delta_1, \delta_2,\ldots, \delta_J \rangle$ and that when it is written as a reduced word in these free generators it assumes the form $v = \delta_1 v' \delta_J^{-1}$. This completes the induction and, in particular shows that $v \ne e$, as required. \end{proof}

\section{Preliminary proofs}
\subsection{Covering of IRS}
Throughout this section $\Gamma$ is any countable group, $\mu \in \IRS(\Gamma)$, $I \subset \IRS(\Gamma)$ is a (Borel measurable) set of invariant random subgroups. We will say that a subgroup is $I$-essential if it is $\mu$-essential for some $\mu \in I$. Denote by $\Eg(I)$ the set of all $I$-essential subgroups. When $I$ is understood we will often omit it from the notation writing just $\Eg$. We will use letters such as $\Fg, \Hg \subset \Eg$ to denote Borel subsets of $\Sub(\Gamma)$; less formally we will refer to Borel subsets $\Fg,\Hg \subset \Eg$ as {\it{families of subgroups}}. 

\begin{definition} \label{def:cover}
We will say that {\it{A family of subgroups $\Fg$ covers $\mu$}}, if their envelopes cover $\Sub(\Gamma)$, up to a $\mu$-nullset. In other words if $\mu$-almost every $\Delta \in \Sub(\Gamma)$ contains some $\Sigma \in \Fg$. {\it{$\Fg$ covers $I$}} if it covers every $\mu \in I$.
\end{definition}
It becomes easier to cover if you use smaller subgroups. For example if  $\trivgp \in \Fg$  then $\Fg$ covers all of $\IRS(\Gamma)$. If $\Fg$ is the collection of all finitely generated subgroups then clearly $\Fg \setminus \{\trivgp\}$ still covers $\IRS^0(\Gamma)$ - the nontrivial invariant random subgroups (recall that we say an IRS is nontrivial if it is nontrivial almost surely). A major theme of the current paper is to take such {\it{trivial}} covers and refine them to covers by larger and more interesting subgroups. The notion of refinement of a cover is made explicit in the following definition:
\begin{definition} \label{def:refine_cov}
Assume that $\Fg$ covers $\mu$ as in the previous definition. We say that a family of essential subgroups $\Hg$ {\it{refines this cover}} and write $\Fg <_{\mu} \Hg$, if for every $\Sigma \in \Fg$ and for $\mu$-almost every $\Delta \in \Env(\Sigma)$ there exists $\Theta \in \Hg$ such that $\Sigma < \Theta < \Delta$. If $I \subset \IRS(\Gamma)$ is covered by $\Fg$ we will say that $\Hg$ {\it{refines this cover}} and write $\Fg <_I \Hg$ if $\Fg <_{\mu} \Hg, \ \ \forall \mu \in I$. 
\end{definition}
\begin{definition} \label{def:monotone}
We say that a family of essential subgroups $\Fg$ is {\it{monotone}} if it is closed under passing to larger essential subgroups. Namely $\Fg$ is monotone if, together with every $\Sigma \in \Fg$ the collection $\Fg$ contains all the groups in $\Env(\Sigma) \cap \Eg$. 
\end{definition}
For example, the collections of infinite essential subgroups is monotone while the collection of finite essential subgroups is usually not. The following lemma is useful in the construction of refined covers. 
\begin{lemma} \label{lem:refine}
Let $\mu \in \IRS(\Gamma)$, and $\Fg,\Hg \subset \Eg(\mu)$ two (Borel) families of  $\mu$-essential subgroups. Assume that $\Fg$ is monotone and covers $\mu$. If every $\Sigma \in \Fg$ is contained in some $\Theta \in \Hg$. Then $\mu$ is covered by $\Hg$ and $\Fg <_{\mu} \Hg$.
\end{lemma}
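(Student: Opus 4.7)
The plan is to prove the two conclusions in the following order: the refinement $\Fg <_{\mu} \Hg$ implies that $\Hg$ covers $\mu$, and so the main work is establishing the refinement. Indeed, once refinement is known, then for $\mu$-almost every $\Delta$ the covering hypothesis on $\Fg$ supplies some $\Sigma \in \Fg$ with $\Sigma < \Delta$, and refinement yields $\Theta \in \Hg$ with $\Sigma < \Theta < \Delta$, so $\Delta \in \Env(\Theta)$ and $\Hg$ covers $\mu$.

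Fix $\Sigma \in \Fg$. I would begin by upgrading the pointwise hypothesis to a Borel selection: apply Jankov--von Neumann uniformization to the Borel relation $R = \{(\Sigma',\Theta) \in \Fg \times \Hg : \Sigma' \subset \Theta\}$, whose first projection equals $\Fg$ by hypothesis, to extract a Borel map $T:\Fg \to \Hg$ with $\Sigma' \subset T(\Sigma')$ for every $\Sigma' \in \Fg$. The key book-keeping observation is that monotonicity of $\Fg$ combined with $T(\Sigma') \in \Hg \subset \Eg(\mu)$ forces $T(\Sigma') \in \Fg \cap \Hg$, so the selection map in fact lands in $\Fg \cap \Hg$.

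Next I would invoke Lemma~\ref{lem:loc_ess} to conclude that $\mu$-almost every $\Delta \in \Env(\Sigma)$ is locally essential. Pick such a $\Delta$ and exhaust it by finitely generated subgroups $\Sigma \subset \Sigma_1 \subset \Sigma_2 \subset \cdots$ with $\bigcup_n \Sigma_n = \Delta$; each $\Sigma_n$ is $\mu$-essential, and since $\Sigma_n \supset \Sigma \in \Fg$, monotonicity of $\Fg$ places every $\Sigma_n$ in $\Fg$. The candidates $T(\Sigma_n) \in \Hg$ then all contain $\Sigma$, giving a natural sequence of potential refiners.

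The principal obstacle will be ensuring that one of these $T(\Sigma_n)$ actually sits inside $\Delta$, since the hypothesis only guarantees $T(\Sigma_n)$ abstractly. I expect this last step to require a careful use of the $\Gamma$-invariance of $\mu$ and the essentiality of $T(\Sigma_n)$: one should control the conditional distribution of $\Delta$ given $\Delta \supset \Sigma_n$, and exploit either a diagonal/recurrence argument or the fact that $T(\Sigma_n) \cap \Delta$ remains essential in a way that lets one replace $T(\Sigma_n)$ by an $\Hg$-member contained in $\Delta$. This ``matching'' of the abstract supergroup $T(\Sigma_n)$ with the ambient random $\Delta$ is where the measurable structure of $\Fg$, $\Hg$ and the IRS $\mu$ must jointly do the work.
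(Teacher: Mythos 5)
Your proposal does not prove the lemma. The first reduction (refinement implies covering) is fine, and the bookkeeping with a Borel selection $T:\Fg\to\Hg$ is harmless but unnecessary: the statement never requires a measurable choice of $\Theta$, only the existence of some $\Theta$ above each $\Sigma'$. The problem is that everything after that is exactly the content of the lemma, and you leave it as an announced ``obstacle'' rather than an argument. Exhausting a single locally essential $\Delta$ by essential finitely generated subgroups $\Sigma_n$ and producing abstract supergroups $T(\Sigma_n)\in\Hg$ gets you nowhere, because the hypothesis constrains $T(\Sigma_n)$ only through its essentiality, i.e.\ through the measure of its envelope; at the level of one fixed sample $\Delta$ there is simply no information forcing any $T(\Sigma_n)$ to lie inside $\Delta$, and no conditioning or recurrence argument applied to that single orbit will manufacture it. Any correct proof has to aggregate over a positive-measure set of samples at once.

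That is what the paper does, and the mechanism is the opposite of yours: instead of taking an increasing union of subgroups inside one $\Delta$, one takes an \emph{intersection} over a positive-measure family of $\Delta$'s. Concretely, fix $\Xi\in\Fg$ and suppose the bad set $Y=\Env(\Xi)\setminus\bigcup_{\Theta\in\Hg\cap\Env(\Xi)}\Env(\Theta)$ has positive measure. Since $\Fg$ covers, some $\Sigma\in\Fg$ satisfies $\mu(\Env(\Sigma)\cap Y)>0$, and one forms the deterministic subgroup $\Sigma'=\bigcap_{\Delta\in\Env(\Sigma)\cap Y}\Delta$. Its envelope contains the positive-measure set $\Env(\Sigma)\cap Y$, so $\Sigma'$ is $\mu$-essential; it contains $\Sigma$, so monotonicity of $\Fg$ puts $\Sigma'\in\Fg$; the hypothesis then supplies $\Theta\in\Hg$ with $\Theta>\Sigma'>\Xi$, and comparing $\Env(\Theta)\subset\Env(\Sigma')$ with the definition of $Y$ yields the contradiction. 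Note how monotonicity is used: not to propagate membership along your chain $\Sigma_n$, but to certify that the intersection subgroup $\Sigma'$ — which is typically not finitely generated and not produced by Lemma~\ref{lem:loc_ess} — is still a legitimate input to the hypothesis. Without that global step your outline cannot be completed.
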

\begin{proof}
Assume, by way of contradiction that for some $\Xi \in \Fg$ the set 
$$Y = Y(\Xi) = \Env(\Xi) \setminus \left( \bigcup_{\Theta \in \Hg \cap \Env(\Xi)} \Env(\Theta) \right)$$ is of positive measure. Since $\Fg$ does cover we can find some $\Sigma \in \Fg$ such that $\mu(\Env(\Sigma) \cap Y) > 0$. Consider the essential subgroup 
$\Sigma' := \cap_{\Delta \in \Env(\Sigma) \cap Y} \Delta > \Sigma$. By definition $\Sigma' > \Sigma$ so by monotonicity $\Sigma' \in \Fg$. So that there exists some $\Theta \in \Hg$ with $\Theta > \Sigma'$. This implies that $\Env(\Theta) \subset \Env(\Sigma') = Y \cap \Env(\Sigma) \subset Y$  in contrast to the definition of $Y$. This shows that $\Fg <_{\mu} \Hg$. The fact that $\Hg$ covers follows upon taking $\Xi = \trivgp$ in the argument above.  \end{proof}

Assume that $\Delta = \Gamma_x \leftIRS \Gamma$ is realized as the stabilizer of a $\mu$-random point for some probability-measure preserving action $\Gamma \curvearrowright (X, \Bc, \mu)$ as in Example (\ref{eg:pmp}) of Subsection \ref{sec:eg_IRS}. For any subgroup $\Sigma < \Gamma$ we denote $\Fix(\Sigma) = \{x \in X \ | \ \sigma x = x \ \ \forall \sigma \in \sigma \}$ and for any subset $O \subset X$ we denote $\Gamma_O = \{ g \in \Gamma \ | \ gx = x {\text{ for almost all }} x \in O \}$. Then, up to nullsets, $\Env(\Sigma) = \{\Gamma_x \ | \ x \in \Fix(\Sigma)\}$ and the essential subgroups are exactly these subgroups $\Sigma < \Gamma$ such that $\mu(\Fix(\Sigma)) > 0$. An essential covering by the essential subgroups $\{ \Sigma_1,\Sigma_2, \ldots \}$ is just a covering (up to nullsets) of the form $X = \cup_i \Fix(\Sigma_i)$. 

\subsection{Projective dynamics} \label{sec:projective}
Our main theorem is reminiscent of the Tits alternative. Indeed the main part of the proof lies in the construction of the dense free subgroup $F$ from Theorem \ref{thm:one_ell}. We follow the geometric strategy put forth by Tits in \cite{Tits:alternative} and further developed in \cite{MS:first}, \cite{MS:Maximal}, \cite{BG:Topological_Tits} and many other papers. According to this strategy, the free group is constructed by first finding a projective representation exhibiting rich enough dynamics and then playing, ping-pong on the corresponding projective space. In particular the paper \cite{BG:Topological_Tits} describes how to go about this when the group $\Gamma$ fails to be finitely generated. In this case the topological field can no longer be taken to be a local field, but rather a countable extension of a local field - which is no longer locally compact. The corresponding projective space in this case is no longer compact, but it is still a bounded complete metric space and the dynamical techniques used in the ping pong game still work. Following \cite[Section 7.2]{GG:primitive} we give a short survey of the necessary projective dynamics that will be needed. We refer the reader to \cite[Section 3]{BG:Dense_Free}, \cite[Sections 3 and 6]{BG:Topological_Tits} for the proofs. Section 6 in the last mentioned paper deals explicitly with the non finitely generated setting. We recommend skipping the technical details in this section in a first reading and referring to them only when they become needed in the proof. 

Let $k$  be a local field $\left\| \cdot\right\| $ the standard norm on $k^{n}$, i.e. the
standard Euclidean norm if $k$ is Archimedean and $\left\| x\right\| =\max_{1\leq i\leq n}|x_{i}|$
where $x=\sum x_{i}e_{i}$ when $k$ is non-Archimedean and $(e_{1},\ldots ,e_{n})$ is the canonical
basis of $k^{n}$. Let $K$ be an extension field of finite or countable degree over $k$, by  \cite[XII, 4, Th. 4.1, p. 482]{Lang:algebra} the absolute value on $k$ extends to $K$. The norm on both fields extends in the usual way to $\Lambda ^{2}k^{n}$ (resp. $\Lambda^2K^n$). The \textit{standard metric} on $\mathbb{P}(K^{n})$ is defined by $d([v],[w])=\frac{\left\| v\wedge w\right\|
}{\left\| v\right\| \left\| w\right\|}$, where $[v]$ denotes the
projective point corresponding to $v\in K^n$. All our notation will refer to this metric, and in particular if $R \subset \PP(K^n)$ and $\epsilon > 0$ we denote the $\epsilon$ neighborhood of $R$ by $(R)_{\epsilon} = \{x \in \PP(K^n) \ | \ d(x,R) < \epsilon\}$. The projective space $\PP(k^n)$ is embedded as a compact subset of the complete metric space $\PP(K^n)$.  

Let $U < \PGL_n(K)$ be the subgroup preserving the standard norm and hence also the metric on $\PP(K^n)$ and $A^{+} = \{\operatorname{diag}(a_1,a_2, \ldots,a_n) \in \PGL_n(K) \ | \ |a_1| \ge |a_2| \ge \ldots \ge |a_n|\}$. The Cartan decomposition (see \cite[Section 7]{BrT:72}) is a product decomposition $\PGL_n(K) = UA^{+}U$, namely each $g \in \PGL_n(K)$ as a product $g = k_g a_g k'_g$ where $k_g,k'_g \in U$ and $a_g \in A^{+}$. This is an abuse of notation because only the $a_g$ component is uniquely determined by $g$, but we will be careful not to imply uniqueness of $k_g,k'_g$. A calculation shows that a diagonal element $a \in A^{+}$  acts as an $\left|a_1/a_n \right|^2$-Lipschitz transformation on $\PP(K^n)$ which immediately implies:
\begin{lemma} \label{lem:proj->lip} ( \cite[Lemma 3.1]{BG:Dense_Free})
Every projective transformation $g \in \PGL_n(K)$ is bi-Lipschitz on $\mathbb{P}(K^{n})$.
\end{lemma}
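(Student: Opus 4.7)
The plan is to reduce to the Cartan decomposition $g = k_g a_g k'_g$ stated just before the lemma and to check the three factors separately. Since both $k_g$ and $k'_g$ lie in $U$, the subgroup preserving the standard norm on $K^n$, they preserve $\|v \wedge w\|$, $\|v\|$ and $\|w\|$ and hence act as isometries on $\mathbb{P}(K^n)$ with respect to the standard metric $d$. Thus these two factors are trivially bi-Lipschitz with constant $1$, and the whole problem collapses to showing that the diagonal factor $a_g \in A^+$ is bi-Lipschitz.

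For this I would invoke the Lipschitz estimate already mentioned in the paragraph preceding the lemma: an element $a = \operatorname{diag}(a_1,\dots,a_n) \in A^+$ is $|a_1/a_n|^2$-Lipschitz on $\mathbb{P}(K^n)$. The bound for the inverse follows the same way, since $a^{-1} = \operatorname{diag}(a_1^{-1},\dots,a_n^{-1})$ and after reordering its diagonal entries into $A^+$-form the ratio of largest to smallest absolute value is again $|a_1/a_n|$, so $a^{-1}$ is $|a_1/a_n|^2$-Lipschitz as well. Therefore $a_g$ is bi-Lipschitz, and composing with the two isometries $k_g, k'_g$ shows that $g$ itself is bi-Lipschitz on $\mathbb{P}(K^n)$ with constant $|(a_g)_1/(a_g)_n|^2$.

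There is essentially no obstacle here, the proof is a direct book-keeping on the Cartan decomposition. The only thing one must be slightly careful about is that the Cartan decomposition is stated in $\PGL_n(K)$, so one should first lift $g$ to $\GL_n(K)$ to perform the computation on representative vectors, and then observe that the Lipschitz constants depend only on ratios $|a_i/a_j|$ and are therefore well defined on the projective quotient. Once this is noted, the argument is just the three-line composition above.
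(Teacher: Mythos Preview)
Your proof is correct and follows exactly the same route as the paper: the paper simply notes that a diagonal element $a \in A^{+}$ acts as an $|a_1/a_n|^2$-Lipschitz transformation and says this ``immediately implies'' the lemma, which is precisely your Cartan-decomposition argument with the $U$-factors acting isometrically. You have in fact written out slightly more detail than the paper does.
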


For $\epsilon \in (0,1)$, we call a projective transformation $g \in \PGL_{n}(K)$ {\it{$\epsilon$-contracting}} if there exist a point $v_{g} \in \PP(K^n),$ called an attracting point of $g,$ and a projective hyperplane $H_{g} < \PP(K^n)$, called a repelling hyperplane of $g$, such that 
$$g \left(\PP(K^n) \setminus (H_{g})_{\epsilon} \right) \subset \left(v_{g}\right)_{\epsilon}.$$ 
Namely $g$ maps the complement of the $\epsilon$-neighborhood of the repelling hyperplane into the $\epsilon $-ball around the attracting point. We say that $g$ is $\epsilon
${\it{-very contracting}} if both $g$ and $g^{-1}$ are $\epsilon $-contracting. A
projective transformation $g \in \PGL_{n}(K)$ is called $(r,\epsilon )${\it{-proximal}} for some $r>2\epsilon >0$, if it is $\epsilon$-contracting with respect to some attracting point $v_{g} \in \PP(K^n)$ and some repelling hyperplane $H_{g}$, such that $d(v_{g},H_{g}) \geq r$. The transformation $g$ is called $(r,\epsilon )${\it{-very proximal}} if both $g$ and $g^{-1}$ are $(r,\epsilon )$-proximal. Finally, $g$ is
simply called {\it{proximal}} (resp. {\it{very proximal}}) if it is $(r,\epsilon)$-proximal (resp. $(r,\epsilon)$-very proximal) for some $r>2 \epsilon >0$. The attracting point $v_{g}$ and repelling hyperplane $H_{g}$ of an $\epsilon$-contracting transformation are in general not uniquely defined. Still under rather mild conditions it is possible to find a canonical choice of $v_{g}$ and $H_{g}$ which are fixed by $g$ and all of its powers:

\begin{lemma}\label{fix}(\cite[Lemma 3.1]{BG:Topological_Tits})
Let $\epsilon \in (0,\frac{1}{4})$. There exist two constants $
c_{1},c_{2}\geq 1$ (depending only on the field $K$) such that if $g \in \PGL_n(K)$ is an $(r,\epsilon)$-proximal transformation with $r \geq c_{1}\epsilon $ then it must fix a unique point $\overline{v}_{g}$ inside its attracting neighborhood and a unique projective hyperplane $\overline{H}_{g}$ lying inside its repelling neighborhood\footnote{by this we mean that if $v,H$ are any couple of a pointed a hyperplane with $d(v,H)\geq r$ s.t. the complement of the $\epsilon$-neighborhood of $H$ is mapped under $g$ into the $\epsilon$-ball around $v$, then $\overline{v}_g$ lies inside the $\epsilon$-ball around $v$ and $\overline{H}_g$ lies inside the $\epsilon$-neighborhood around $H$}.
Moreover, if $r\geq c_{1}\epsilon ^{2/3}$, then the positive powers $g^{n}$, $n\geq 1$, are $(r-2\epsilon ,(c_{2}\epsilon)^{\frac{n}{3}})$-proximal transformations with respect to these same $\overline{v}_{g}$ and $\overline{H}_{g}$. 
\end{lemma}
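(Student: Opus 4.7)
The plan is to exploit the Cartan decomposition $g = k_g a_g k'_g$ with $a_g = \operatorname{diag}(a_1,\ldots,a_n)$, $|a_1| \ge |a_2| \ge \cdots \ge |a_n|$, to pin down canonical attracting/repelling data and then invoke Banach's contraction principle on the complete metric space $\PP(K^n)$. First I would show that the $(r,\epsilon)$-proximality hypothesis, with $r\geq c_1\epsilon$, forces the ``gap'' ratio $|a_2/a_1|$ to be small, roughly of order $\epsilon/r$; otherwise the image of the complement of $(H_g)_\epsilon$ could not be compressed into the ball $(v_g)_\epsilon$. In the affine chart around $k_g[e_1]$, the element $a_g$ acts as a linear map with coordinate scalings $a_i/a_1$, whose operator norm is thus at most $|a_2/a_1|$; combining with Lemma \ref{lem:proj->lip} to handle the $U$-factors yields that $g$ is a genuine Lipschitz contraction, with constant $\le C\epsilon$, on the attracting ball $(v_g)_\epsilon$ (provided $c_1$ is chosen large enough so that this ball is disjoint from $(H_g)_\epsilon$).

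Once this contraction estimate is in place, the existence and uniqueness of $\overline{v}_g$ follow immediately from the Banach fixed point theorem: $g$ maps the closed set $(v_g)_\epsilon$ (a complete metric space as a closed subset of $\PP(K^n)$) into itself and contracts distances by a factor $\le C\epsilon < 1$. For the uniqueness of the fixed hyperplane $\overline{H}_g$, I would pass to the dual projective space $\PP((K^n)^*)$, on which $g$ acts via the inverse transpose. Under this dual action the roles of attracting point and repelling hyperplane swap, and the transformation is again $(r,\epsilon')$-proximal (with $\epsilon'$ a controlled multiple of $\epsilon$), so the same Banach argument produces a unique $g$-fixed hyperplane in the $\epsilon$-neighborhood of $H_g$. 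Note that this argument does \emph{not} require $g^{-1}$ to be contracting, which is crucial since the hypothesis is one-sided proximality.

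For the statement about positive powers under the stronger hypothesis $r \ge c_1\epsilon^{2/3}$, I would iterate starting from the canonical data $(\overline{v}_g, \overline{H}_g)$, which are by construction fixed by every $g^n$. The naive estimate would give $g^n$-contraction with rate $(C\epsilon)^n$, but we must account for the fact that the canonical fixed point/hyperplane are only guaranteed to lie \emph{within} the $\epsilon$-neighborhoods of $v_g$ and $H_g$, so composing $n$ applications of the contraction introduces cumulative displacement. The distance $r-2\epsilon$ in the conclusion reflects a worst-case total displacement of $\epsilon$ on each side. The exponent $n/3$ (rather than $n$) arises from balancing three quantities: the contraction factor of $g$ on the attracting ball, the width of the attracting neighborhood for $g^n$, and the safety gap to the repelling hyperplane; optimizing this three-way balance yields $(c_2\epsilon)^{n/3}$ and is the reason one needs $r \ge c_1 \epsilon^{2/3}$ rather than $r \ge c_1\epsilon$.

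The main obstacle will be the precise bookkeeping in the last step: tracking how the bi-Lipschitz constants on $\PP(K^n)$ interact with the Cartan coordinates to produce exactly the $2/3$ hypothesis and $n/3$ conclusion exponents. This is delicate because one cannot simply bound $g^n$ using the Cartan decomposition of $g$ alone --- one must verify, at each iterate, that the repelling data of $g^n$ has not drifted outside the repelling neighborhood of $g$, which is exactly where the cube-root exponents arise from interpolating between the hypothesis on $r$ and the required decay rate of $\epsilon_n$.
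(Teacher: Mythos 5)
Your overall strategy is sound and, at the decisive point, coincides with the paper's. The paper does not reprove the contraction estimates at all: it imports the entire local-field argument from Breuillard--Gelander (the Cartan-gap estimate $|a_2/a_1|\lesssim\epsilon^2$, the Banach fixed-point argument for $\overline{v}_g$, and the bookkeeping for powers with the $\epsilon^{2/3}$ and $(c_2\epsilon)^{n/3}$ exponents), observes that everything except one step works verbatim over the complete but non-locally-compact field $K$, and only supplies the missing step: the existence of the fixed hyperplane, which in the original proof rests on a compactness argument unavailable for $\PP(K^n)$. The paper's fix is exactly your dualization idea -- transfer the problem to $\PP((K^n)^{*})$ and apply the contraction argument there -- carried out via the identities $d([v],[\ker f])=|f(v)|$ and $\Hd([\ker f],[\ker h])=\norm{f\wedge h}=d_{\PP((K^n)^{*})}([f],[h])$, which show that the dual transformation is $(r,\epsilon)$-proximal with the \emph{same} constants. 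So your proposal is more ambitious than necessary (you redo the quantitative parts that can simply be cited), and your sketch of the $2/3$ and $n/3$ exponents is the one place where you gesture at bookkeeping without actually doing it.

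One correction is needed in the dualization step: the map you should use on $\PP((K^n)^{*})$ is the \emph{transpose} $g^{t}$, not the inverse transpose. A functional $f$ spans a $g$-invariant hyperplane $\ker f$ iff $[f]$ is fixed by $g^{t}$, equivalently by $(g^{t})^{-1}$, so either map has the right fixed points; but only $g^{t}$ inherits the contraction. Indeed $g^{t}$ has the same Cartan component $a_g$ as $g$, and its attracting point in the dual space is the functional cutting out $\overline{H}_g$, whereas $(g^{-1})^{t}=(g^{t})^{-1}$ has Cartan component $a_{g^{-1}}$, whose top gap is $|a_{n-1}/a_n|$ -- about which the one-sided proximality hypothesis says nothing. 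Since you correctly insist that the argument must not use any contraction property of $g^{-1}$, you must work with $g^{t}$; with the inverse transpose the Banach step fails exactly in the situation the lemma is designed to cover.
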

\begin{proof}
The proof of this lemma in \cite[Section 3.4]{BG:Topological_Tits} is done for a local field $k$. Most of the proof carries over directly to the larger field $K$, except for the existence of a $g$-fixed hyperplane $\overline{H}_g \subset R(g)$. That part of the proof  uses a compactness argument, which does not work of the larger field $K$. 

We consider the action of the transpose $g^t$ on the projective space over the dual $\PP((K^n)^{*})$. Points are can be identified with hyperplanes in $\PP(K^n)$ and vice-versa so if we prove that $g^t$ is still $(r,\epsilon)$ contracting with the same constants the first part of the proof in Breulliard-Gelander's proof would apply and give rise to a fixed point in the dual projective space, corresponding to the desired fixed hyperplane. Given vectors $v,w \in K^n$ and linear functionals $f,h \in (K^n)^*$, all of them of norm one, we will use the following formulas
\begin{eqnarray} \label{eq}
d([v],[w]) & = & \norm{v \wedge w} \label{eqq1} \\
d([v],[\ker f]) & = & \left|f(v)\right|   \label{eqq2} \\ 
\Hd([\ker f], [\ker h]) & = & \max_{x \in \ker h} \frac{\left|f(x) \right|}{\norm{x}} = \norm{f \wedge h} = d_{\PP((K^n)^{*})} ([f],[h])   \label{eqq3}
\end{eqnarray}
Where $\Hd$ denotes the Hausdorff distance. All distances are measured in $\PP(K^n)$ except in the very last expression, where the contrary is explicitly noted. Equations (\ref{eqq1}),(\ref{eqq2}) appear in \cite[Section 3]{BG:Dense_Free}. The first and last equalities in (\ref{eqq3}) follow directly form (\ref{eqq2}) and (\ref{eqq1}) respectively. To check the middle equality in (\ref{eqq3}) let us choose a basis so that $h = (h_1, 0,0\ldots, 0), f = (f_1, f_2, \ldots, f_n)$. In this basis the inequality we wish to prove becomes
$$\max_{(x_2, \ldots, x_n) \in K^{n-1}} \frac{\left|f_2x_2 + f_3x_3 + \ldots f_nx_n \right|}{\norm{x}} = \norm{f_2 \hat{e_1} \wedge \hat{e_2} + f_3 \hat{e_1} \wedge \hat{e_3} + \ldots + f_n \hat{e_1} \wedge \hat{e_n}}.$$
Which is indeed true. 

Let $(H,v)$ be a hyperplane and unit vector in $K^n$ realizing the fact that $g$ is $(r,\epsilon)$-proximal. We claim that the transpose $g^t$ is $(r,\epsilon)$-proximal with respect to the dual hyperplane point pair $(L,f)$. Explicitly $f$ is a norm one linear functional with $\ker(f) = H$; and $L = \{l \in (K^{n})^{*} \ | \ l(v) = 0 \}$. The fact that $g$ is $\epsilon$-contracting with respect to $(H,v)$ translates, using Equations (\ref{eq}), into 
$$\left| f(u) \right| \ge \epsilon \norm{u}  \quad \Rightarrow \quad \norm{gu \wedge v} \le \epsilon \norm{g u}.$$
Assume that $\phi$ is any norm one functional with $d_{\PP((K^n)^*)}([\phi],[L]) > \epsilon$. Considering $v$ as a functional on $(K^n)^{*}$ with kernel $L$, the second line in (ref{eq}) gives $|\phi(v)| > \epsilon$. The same equation, now with $\phi$ considered as a functional on $K^n$ implies that $d_{\PP(K^n)}([\ker(\phi)],[v]) > \epsilon$. Since $g$ is by assumption $\epsilon$ contracting this means that 
$$\epsilon > \Hd(g^{-1} \ker(\phi), H) = \Hd(\ker(g^t \phi),H) =  \max_{x \in H} \left\{\frac{\left| g^t \phi (x) \right|}{\norm{x}} \right\} = d([g^t \phi], [f]).$$ Which is exactly the desired $\epsilon$-contraction. 

Note also that
$$d_{\PP((K^{n})^*)}([f],[L]) = d_{\PP((K^{n})^*)}([f],[\ker{v}]) = \left| f(v) \right| = d_{\PP(K^{n})}([\ker f],v) \ge r.$$
So that $g^t$ is indeed $(r,\epsilon)$ contracting with respect to $([L],[f])$. Applying the Breulliard-Gelander proof we obtain a unique $g^t$ fixed point $[\overline{\eta}] \in \left( [f] \right)_{\epsilon}$ which, under the additional assumptions on $(r,\epsilon)$ is also a common attracting point for all powers of $g^t$. We set $\overline{H}_g = \ker(\eta)$. Clearly this is a $g$ fixed subspace and it is easy to verify that it serves as a common repelling hyperplane for all the powers of $g$. \end{proof}

In what follows, whenever we add the article \textit{the} (or {\it the canonical}) to an
attracting point and repelling hyperplane of a proximal transformation $g$, we shall
mean these fixed point $\overline{v}_{g}$ and fixed hyperplane $\overline{H}
_{g}$ obtained in Lemma \ref{fix}. Moreover, when $r$ and $\epsilon$ are given, we shall denote by
$A(g),R(g)$ the $\epsilon$-neighborhoods of $\overline{v}_g,\overline{H}_g$ respectively. In some
cases, we shall specify different attracting and repelling sets for a proximal element $g$, denoting them by $\mathcal{A}(g),\mathcal{R}(g) \subset \PP(K^n)$ respectively. This means that
$$g \big(\PP(K^n)\setminus\mathcal{R}(g)\big)\subset\mathcal{A}(g). $$
If $g$ is very proximal and we say that
$\mathcal{A}(g),\mathcal{R}(g),\mathcal{A}(g^{-1}),\mathcal{R}(g^{-1})$ are specified attracting
and repelling sets for $g,g^{-1}$ then we shall always require additionally that
$$
 \mathcal{A}(g)\cap\big(\mathcal{R}(g)\cup\mathcal{A}(g^{-1})\big)=
 \mathcal{A}(g^{-1})\cap\big(\mathcal{R}(g^{-1})\cup\mathcal{A}(g)\big)=\emptyset.
$$

\medskip

Very proximal elements are important as the main ingredients for the construction of free groups using the famous ping-pong lemma:
\begin{lemma}\label{lem:ping-pong} (The ping-pong Lemma)
Suppose that $\{ g_i\}_{i\in I}\subset\PGL_n(K)$ is a set of very proximal elements, each associated with some given attracting and repelling sets for itself and for its inverse. Suppose that for any $i\neq j,~i,j\in I$ the attracting set of $g_i$ (resp. of $g_i^{-1}$) is disjoint from both the attracting and repelling sets of both $g_j$ and $g_j^{-1}$, then the $g_i$'s form an independent set, i.e. they freely generate a free group.
\end{lemma}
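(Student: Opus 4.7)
The plan is to run the classical ping-pong argument, using the dynamical hypothesis to control the orbit of a well-chosen test point under an arbitrary non-trivial reduced word in the $g_i$'s. Concretely, let $w = g_{i_1}^{n_1} g_{i_2}^{n_2} \cdots g_{i_k}^{n_k}$ be a reduced word, meaning $n_j \neq 0$ for all $j$ and $i_j \neq i_{j+1}$ for $1 \leq j < k$. I want to show $w \neq e$ in $\PGL_n(K)$ by producing a point $x_0 \in \PP(K^n)$ with $w(x_0) \neq x_0$.

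The first step is to pick the test point $x_0$ so that (i) $x_0 \notin \mathcal{A}(g_{i_1}) \cup \mathcal{A}(g_{i_1}^{-1})$, and (ii) $x_0 \notin \mathcal{R}(g_{i_k}^{\mathrm{sgn}(n_k)})$. If $|I| \geq 2$, I choose some index $j \neq i_1, i_k$ and take $x_0 \in \mathcal{A}(g_j)$; by the hypothesis that the attracting set of $g_j$ is disjoint from both the attracting and repelling sets of every other $g_i^{\pm}$, both requirements hold. If $|I| = 1$, so $k = 1$ and $i_1 = i_k$, I instead take $x_0 \in \mathcal{A}(g_{i_1}^{-\mathrm{sgn}(n_1)})$; by the single-element disjointness axiom $\mathcal{A}(g)\cap(\mathcal{R}(g)\cup\mathcal{A}(g^{-1}))=\emptyset$ recalled just before the lemma, this point avoids both $\mathcal{A}(g_{i_1}^{\mathrm{sgn}(n_1)})$ and $\mathcal{R}(g_{i_1}^{\mathrm{sgn}(n_1)})$.

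The second step is the ping-pong recursion. I claim, by induction on $j$ decreasing from $k$ to $1$, that
\[
y_j := g_{i_j}^{n_j} g_{i_{j+1}}^{n_{j+1}} \cdots g_{i_k}^{n_k}(x_0) \in \mathcal{A}\bigl(g_{i_j}^{\mathrm{sgn}(n_j)}\bigr).
\]
For the base case $j=k$: condition (ii) gives $x_0 \notin \mathcal{R}(g_{i_k}^{\mathrm{sgn}(n_k)})$, so applying the $|n_k|$-fold iterate lands $y_k$ in $\mathcal{A}(g_{i_k}^{\mathrm{sgn}(n_k)})$ (each successive application of $g_{i_k}^{\mathrm{sgn}(n_k)}$ stays inside the attracting set, which by the single-letter axiom is disjoint from its own repelling set). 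For the inductive step, by hypothesis $y_{j+1} \in \mathcal{A}(g_{i_{j+1}}^{\mathrm{sgn}(n_{j+1})})$, and since $i_j \neq i_{j+1}$ the disjointness assumption gives $y_{j+1} \notin \mathcal{R}(g_{i_j}^{\mathrm{sgn}(n_j)})$; so applying $g_{i_j}^{n_j}$ pushes $y_{j+1}$ into $\mathcal{A}(g_{i_j}^{\mathrm{sgn}(n_j)})$.

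Finally, applying the claim at $j = 1$ yields $w(x_0) = y_1 \in \mathcal{A}(g_{i_1}^{\mathrm{sgn}(n_1)})$, which by condition (i) is disjoint from the set containing $x_0$. Hence $w(x_0) \neq x_0$ and $w \neq e$, proving that the $g_i$'s generate a free group freely. The only real subtlety is the careful choice of $x_0$ to avoid the repelling set at the right end and the attracting sets at the left end simultaneously; I handle it by splitting on $|I|=1$ versus $|I|\geq 2$, but both cases reduce to the same combinatorial statement via the built-in disjointness of attracting and repelling sets recalled above. No step is genuinely hard once the dynamical setup of Lemma \ref{fix} and the disjointness conventions are in place.
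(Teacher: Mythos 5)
The paper itself does not prove this lemma --- it quotes it as the classical ping-pong lemma --- so your argument has to stand on its own. Its inductive core is fine: once $x_0$ satisfies (i) and (ii), the claim $y_j \in \mathcal{A}(g_{i_j}^{\mathrm{sgn}(n_j)})$ propagates correctly (the cross-disjointness hypothesis handles the step $i_{j+1} \to i_j$, and the convention $\mathcal{A}(g)\cap\mathcal{R}(g)=\emptyset$ lets you iterate within a syllable), and the conclusion follows. Both gaps are in the choice of $x_0$. The first: your main case needs an index $j \in I$ with $j \neq i_1$ and $j \neq i_k$, and when $|I|=2$ and the word begins and ends with different generators no such $j$ exists, so e.g.\ $w = g_1 g_2$ with $I=\{1,2\}$ is not covered. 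The repair stays inside your framework: when $i_1 \neq i_k$ take $x_0 \in \mathcal{A}(g_{i_k}^{\mathrm{sgn}(n_k)})$ itself, which is disjoint from $\mathcal{R}(g_{i_k}^{\mathrm{sgn}(n_k)})$ by the single-element convention and from $\mathcal{A}(g_{i_1}^{\pm 1})$ by the cross-disjointness hypothesis; reserve the auxiliary index $j$ for the case $i_1 = i_k$, where some $j \neq i_1$ exists as soon as $|I| \geq 2$.

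The second gap is a genuinely false step. In the case $|I|=1$ you take $x_0 \in \mathcal{A}(g^{-\mathrm{sgn}(n_1)})$ and assert it avoids $\mathcal{R}(g^{\mathrm{sgn}(n_1)})$. The convention recalled before the lemma gives $\mathcal{A}(g^{-1})\cap(\mathcal{R}(g^{-1})\cup\mathcal{A}(g))=\emptyset$; it says nothing about $\mathcal{A}(g^{-1})\cap\mathcal{R}(g)$, and for the canonical choices this intersection is typically nonempty: for $g=\operatorname{diag}(a,1,\ldots,1,a^{-1})$ with $|a|$ large one has $\overline{v}_{g^{-1}}=[e_n]\in\overline{H}_g=[\langle e_2,\ldots,e_n\rangle]$. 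So your base point may lie in the repelling set of the only letter available, and the base case of the induction collapses. For a single very proximal element one must argue differently: $g^{n}$ maps $\PP(K^n)\setminus\mathcal{R}(g)$ into $\mathcal{A}(g)$ for all $n\geq 1$, so $g^{n}=e$ would force $\PP(K^n)=\mathcal{A}(g)\cup\mathcal{R}(g)$, which is impossible because an $\epsilon$-ball around $\overline{v}_g$ and the $\epsilon$-neighborhood of $\overline{H}_g$ with $d(\overline{v}_g,\overline{H}_g)\geq r>2\epsilon$ do not cover the projective space.
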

A set of elements which satisfy the condition of Lemma \ref{lem:ping-pong} with respect to some given attracting and repelling sets will be said to form a ping-pong set (or a ping-pong tuple).
\medskip

A novel geometric observation of the papers \cite{BG:Dense_Free,BG:Topological_Tits} is that an element $g \in \PGL_n(K)$  is $\epsilon$-contracting, if and only if it is $\epsilon'$-Lipschitz on an arbitrarily small open set. This equivalence comes with an explicit dependence between the two constants, as summarized in the lemma below:\begin{lemma}\label{lem:contracting=Lipschitz} (\cite[Proposition 3.3 and Lammas 3.4 and 3.5]{BG:Dense_Free}). 
There exists some constant $c$, depending only on the local field $k < K$,
such that for any $\epsilon \in (0,\frac{1}{4})$ and $d \in (0,1)$,
\begin{itemize}
\item if $g \in \PGL_n(K)$ is $(r,\epsilon)$-proximal for $r>c_1 \epsilon$,
then it is $c\frac{\epsilon^2}{d^2}$-Lipschitz outside the $d$-neighborhood of the repelling hyperplane, 
\item Conversely, if $g \in \PGL_n(K)$ is $\epsilon^2$-Lipschitz on some open neighborhood then it is $c \epsilon$-contracting.
\end{itemize}
Here $c_1$ is the constant given by Lemma \ref{fix} above.
\end{lemma}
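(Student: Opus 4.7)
The plan is to prove both directions by reducing, via the Cartan decomposition $g = k_g\, a_g\, k'_g$, to the diagonal case $g = a_g = \operatorname{diag}(a_1, \ldots, a_n)$ with $|a_1| \geq \cdots \geq |a_n|$. Since $k_g, k'_g \in U$ act as isometries on $(\PP(K^n), d)$, both Lipschitz constants on a set and the proximality data transport covariantly under $k_g$ and $k'_g$. After this reduction the canonical attracting point is $[e_1]$ and the canonical repelling hyperplane is $H = \{x_1 = 0\}$, so the entire problem becomes quantitative control of the single ratio $\lambda := |a_2|/|a_1|$.

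For the forward direction, the key identity is
\[gv \wedge gw \;=\; \sum_{i<j} a_i a_j\,(v_i w_j - v_j w_i)\, e_i \wedge e_j,\]
which yields $\norm{gv \wedge gw} \leq |a_1 a_2|\,\norm{v \wedge w}$ in both the Archimedean (Euclidean) and non-Archimedean (sup) norms on $\Lambda^2 K^n$. For unit representatives of $[v], [w]$ outside the $d$-neighborhood of $H$, the identity $d([v], H) = |v_1|$ gives $|v_1|, |w_1| \geq d$ and therefore $\norm{gv}, \norm{gw} \geq |a_1|\, d$. Combined with $d([v],[w]) = \norm{v \wedge w}/(\norm{v}\norm{w})$, this produces a pointwise Lipschitz bound of $\lambda/d^2$. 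It then remains to show that $(r,\epsilon)$-proximality with $r > c_1 \epsilon$ forces $\lambda \leq C \epsilon^2$, where $C$ depends only on $k$; this follows by evaluating the wedge formula on two suitably separated reference points in $\PP(K^n) \setminus (H)_\epsilon$ whose $g$-images both lie in the $\epsilon$-ball about $v_g$, so the diameter contracts by a factor $\lesssim \epsilon$. Composing the two estimates yields the stated $c \epsilon^2/d^2$-Lipschitz property.

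For the converse, suppose $g$ is $\epsilon^2$-Lipschitz on an open set $O$. After the Cartan reduction we may assume $g$ is diagonal; shrinking $O$ we may take it to be a ball of fixed small radius at positive distance from $\{x_1 = 0\}$. Applying the wedge identity in reverse — selecting test points $[v], [w] \in O$ for which the ratio $d([gv], [gw])/d([v], [w])$ is bounded below by $\lambda$ up to a constant depending only on the geometry of $O$ — converts the Lipschitz hypothesis into the bound $\lambda \leq C' \epsilon^2$. Once $\lambda$ is this small, one further application of the wedge estimate, taken with $[v]$ outside the $c\epsilon$-neighborhood of $H$ and $[w] = [e_1]$, yields $d([gv], [e_1]) \leq c\epsilon$, establishing the $c\epsilon$-contracting conclusion. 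The main obstacle throughout is keeping the constants dependent only on the local field $k$ and not on the (possibly non-locally-compact) extension $K$; this is ensured because $\norm{\cdot}$ and the Cartan decomposition extend from $k$ to $K$ while preserving all quantitative features, and the wedge-norm inequalities used above hold identically in the Archimedean and non-Archimedean settings, so no compactness of $K$ is invoked anywhere.
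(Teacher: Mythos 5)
Your overall strategy is the right one, and it is worth noting that the paper does not actually reprove this lemma: its ``proof'' is a citation of the Breuillard--Gelander arguments for a local field $k$, plus the single observation that the only field-dependent constant there (the norm $|\pi|$ of a uniformizer) can only improve when passing to the extension $K$. What you have written is a reconstruction of the cited argument itself -- Cartan decomposition, reduction to the diagonal case, and control of everything through the single ratio $\lambda=|a_2|/|a_1|$, which is exactly the ``third property'' the paper says the equivalence is routed through. Your forward direction is correct: the wedge identity gives the upper bound $\lambda/d^2$ on the Lipschitz constant outside the $d$-neighborhood of $\{x_1=0\}$, and testing the contraction hypothesis on $[e_1]$ and a point at distance about $\epsilon$ from the hyperplane gives $\lambda\lesssim\epsilon^2$. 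Your closing remark on field-independence of the constants is also the correct resolution of the one issue the paper itself flags.

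The gap is in the converse. You write that, after shrinking, ``we may take $O$ to be a ball of fixed small radius at positive distance from $\{x_1=0\}$,'' and then extract $\lambda\le C'\epsilon^2$ ``up to a constant depending only on the geometry of $O$.'' You cannot relocate $O$: the hypothesis gives Lipschitz control only on the given neighborhood, which may hug the hyperplane $\{x_1=0\}$, and while every ball does contain a sub-ball at \emph{some} positive distance $\delta$ from it, that $\delta$ depends on $O$. Any bound of the form $\lambda\le\epsilon^2/\delta$ is then useless, because the lemma requires the final constant $c$ to depend only on the field $k$ and not on $O$. The repair is to show that the lower bound on the local Lipschitz constant is \emph{uniform over all of} $\PP(K^n)$: for any unit vector $v$, choose $j\in\{1,2\}$ with $d([gv],[e_j])\ge\tfrac12$ (possible since $d([e_1],[e_2])=1$) and perturb $[v]$ toward $[e_j]$; the wedge identity then gives $d([gv],[gw])/d([v],[w])\ge \tfrac12|a_j|\,\norm{v}/\norm{gv}\ge\tfrac12\lambda$ in the limit $w\to v$. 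Hence $\epsilon^2$-Lipschitz on \emph{any} open set forces $\lambda\le 2\epsilon^2$, with a constant depending only on the normalization of the metric, and your final step (from $\lambda\lesssim\epsilon^2$ to $c\epsilon$-contraction with attracting point $[e_1]$ and repelling hyperplane $\{x_1=0\}$) then goes through as you state.
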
  
\begin{proof}
The proof for the case $K = k$ is given by the Proposition and two Lemmas indicated between brackets in the statement of the lemma are stated for a local field $k$. There the equivalence between the two properties is established via a third property, namely that  $\left|(a_2)_{g} /(a_1)_g\right| < c'' \epsilon^2$. 

It is routine to go over the proofs there and verify that they work in our more general situation. Basically there is only one thing to notice. In the proof that $\epsilon$-contraction implies $|a_2(g)/a_2(g)| < \epsilon^2/|\pi|$ (the converse implication of \cite[Proposition 3.3]{BG:Dense_Free}), the constant does depend on the field in a non-trivial way. Indeed $|\pi|$ here is the absolute value of the uniformizer of the (non-Archimedean in this case) local field. However when one passes to a larger field the the situation only gets better as the norm of the uniformizer can only grow. If ultimately the value group of $K$ is non-discrete one can do away with $|\pi|$ altogether (taking $|\pi| =1$ as it were).  
\end{proof}

\medskip
The above Lemma will enable us to obtain contracting elements - we just have to construct elements with good Lipschitz constants on an arbitrarily small open set. What we really need, in order to play ping-pong are very-proximal elements. To guarantee this we need to assume that our group is large enough.  
\begin{definition} \label{def:si}
A group $G \leq \PGL_n(K)$ is called {\it{irreducible}} if it does not stabilize any non-trivial projective subspace. It is called {\it{strongly irreducible}} if the following equivalent conditions hold
\begin{itemize}
\item It does not stabilize any finite union of projective hyperplanes,
\item Every finite index subgroup is irreducible,
\item The connected component of the identity in its Zariski closure is irreducible.
\end{itemize}
We will say that a projective representation $\rho: G \arrow \PGL_n(K)$ is {\it{reducible}} or {\it{strongly irreducible}} if the image of the representation has the same property. 
\end{definition}
The equivalence of the first two conditions is clear. The equivalence with the third property follows from the fact that fixing a projective subspace is a Zariski closed condition. It turns out that if a group is strongly irreducible and it contains contracting elements then the existence of very proximal elements is guaranteed.
\begin{lemma}\label{lem:contracting->very-proximal} (See \cite[Proposition 3.8 (ii) and (iii)]{BG:Dense_Free})
Suppose that $G\leq \PGL_n(K)$ is a group which acts strongly irreducibly on the projective space $\PP (K^n)$. Then there are
constants
$$
 \epsilon (G),r(G),c(G)>0
$$
such that if $g\in G$ is an $\epsilon$-contracting transformation for some $\epsilon <\epsilon (G)$ then for
some $f_1,f_2\in G$ the element $gf_1g^{-1}f_2$ is $(r(G),c(G)\epsilon)$-very proximal.
\end{lemma}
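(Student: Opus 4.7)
The strategy is the standard Breuillard--Gelander trick of using strong irreducibility to place the attracting and repelling data of $g$, its inverse, and their translates under $f_1, f_2$ into sufficiently generic position, so that the composition becomes very proximal. By Lemma \ref{lem:contracting=Lipschitz}, the hypothesis that $g$ is $\epsilon$-contracting translates into the singular-value bound $|a_2(g)/a_1(g)| \le c\epsilon^2$ on the KAK decomposition $g = k_g a_g k'_g$. After possibly replacing $g$ by a bounded power so that the canonical attracting point $\overline{v}_g$ and repelling hyperplane $\overline{H}_g$ of Lemma \ref{fix} are available, we work with this canonical data.

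Set $h := gf_1 g^{-1} f_2$ and note $h^{-1} = f_2^{-1} g f_1^{-1} g^{-1}$. The key observation is that in both $h$ and $h^{-1}$ the contracting element $g$ appears once --- as the \emph{outermost} letter in $h$ and as the \emph{middle} letter in $h^{-1}$. Consequently, for a generic $x$, the output $hx$ lies in an $\epsilon$-ball around $\overline{v}_g$ (the attracting point of $g$), while $h^{-1} x$ lies in an $\epsilon$-ball around $f_2^{-1} \overline{v}_g$. At no point do we use $g^{-1}$ as a contracting map: it enters only as a bi-Lipschitz transformation (Lemma \ref{lem:proj->lip}), always sandwiched next to a contracting instance of $g$.

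The choice of $f_1, f_2$ is dictated by strong irreducibility. The $G$-orbits of the finitely many points and hyperplanes at play (namely $\overline{v}_g$, $\overline{H}_g$ and the images of these objects under the inner letters of $h$) are not contained in any finite union of proper projective subvarieties, because the Zariski connected component of $\overline{G}$ acts irreducibly. A standard compactness/orbit argument then produces $f_1, f_2 \in G$ and a constant $r(G) > 0$, depending only on $G$, such that all pairwise distances between the resulting attracting points and repelling hyperplanes are bounded below by $r(G)$. Using the Lipschitz estimate of Lemma \ref{lem:contracting=Lipschitz} on the complement of an $r(G)$-neighborhood of $\overline{H}_g$, combined with the fact that $g^{-1}$ is bi-Lipschitz on the relevant generic region, one then verifies both that $h$ contracts this region into an $O(\epsilon)$-ball around $\overline{v}_g$ and that $h^{-1}$ does the analogous thing around $f_2^{-1} \overline{v}_g$. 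This yields the $(r(G), c(G)\epsilon)$-very proximality of $h$.

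The main obstacle is ensuring that the bi-Lipschitz constant of $g^{-1}$, which can be arbitrarily large compared to $\epsilon$, does not enter the final contraction estimate of $h$; it should only affect the \emph{diameter} of the generic region, not the Lipschitz constant on it. This is handled by invoking Lemma \ref{lem:contracting=Lipschitz} in both directions: first locate an open set on which $h$ is $(c'\epsilon)^2$-Lipschitz (using that $g$'s contraction dominates the sandwiched $g^{-1}$), and then convert this Lipschitz bound back into $c(G)\epsilon$-contraction. The distance $r(G)$ between the attracting and repelling data is guaranteed by the generic choice of $f_1, f_2$, independent of $g$ and $\epsilon$, and the very proximality for $h^{-1}$ follows by the symmetric argument.
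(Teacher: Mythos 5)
The paper does not prove this lemma at all: it is quoted verbatim from Breuillard--Gelander (\cite[Proposition 3.8 (ii) and (iii)]{BG:Dense_Free}), so there is no internal proof to compare against. Your sketch is a faithful reconstruction of the cited argument, and the key structural points are right: $g$ appears as the last-applied letter of $h=gf_1g^{-1}f_2$ and as the second-to-last letter of $h^{-1}=f_2^{-1}gf_1^{-1}g^{-1}$, so the two attracting points are $\overline{v}_g$ and $f_2^{-1}\overline{v}_g$; the inner $g^{-1}$ is neutralized not because it is ``dominated'' pointwise but because in the singular-value ratio $|a_2(h)/a_1(h)|$ its expansion cancels (one gets $|a_2(h)/a_1(h)|\le C|a_2(g)/a_1(g)|\cdot|a_n(g)/a_{n-1}(g)|/\delta^2\le C\epsilon^2/\delta^2$ once the transversality constant $\delta$ is secured), and Lemma \ref{lem:contracting=Lipschitz} converts this back into $c(G)\epsilon$-contraction. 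Two soft spots. First, your appeal to Lemma \ref{fix} is out of place: a merely $\epsilon$-contracting $g$ need not be proximal, so no canonical fixed $\overline{v}_g,\overline{H}_g$ exist and no bounded power produces them; you should simply work with an arbitrary choice of attracting point and repelling hyperplane witnessing the contraction, which is all the argument uses. Second, and more seriously in the context of this paper, the ``standard compactness/orbit argument'' producing $f_1,f_2$ and a uniform $r(G)>0$ uses compactness of the space of (point, hyperplane) pairs, and $\PP(K^n)$ is \emph{not} compact here: $K$ is in general only a complete valued extension of a local field, as the paper stresses in Subsection \ref{sec:projective}. This is precisely the issue that Section 6 of \cite{BG:Topological_Tits} is devoted to, and it is the one step of your outline that does not go through verbatim; either the transversality constants must be obtained by a quantitative algebraic argument, or one must arrange that all the relevant attracting/repelling data lie in (a bounded neighborhood of) the compact subspace $\PP(k^n)$. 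As a sketch of the cited proof your proposal is otherwise sound.
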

\noindent Combining the above two lemmas we obtain.
\begin{lemma}\label{lem:lip->very-proximal}
Suppose that $\Sigma \leq \PGL_n(K)$ is a group which acts strongly irreducibly. Then there are constants $\epsilon (\Sigma),r(\Sigma),c(\Sigma)>0$ such that for every $\lambda \in \PGL_n(K)$ which is locally $\epsilon$-Lipschitz for some $\epsilon <\epsilon (\Sigma)$ there exist $f_1, f_2\in \Sigma$ such that $\lambda f_1 \lambda^{-1} f_2$ is $(r(\Sigma),c(\Sigma)\sqrt{\epsilon})$-very proximal.
\end{lemma}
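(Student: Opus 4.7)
The strategy is to chain Lemmas \ref{lem:contracting=Lipschitz} and \ref{lem:contracting->very-proximal}: the first converts the Lipschitz hypothesis on $\lambda$ into a contraction estimate, and the second uses the strong irreducibility of $\Sigma$ to upgrade a contracting element to a very proximal one.

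First, I would apply the converse direction (the second bullet) of Lemma \ref{lem:contracting=Lipschitz} to $\lambda$. Writing the assumed local Lipschitz constant as $(\sqrt{\epsilon})^{2}$, that lemma supplies a constant $c_{K}$ depending only on the field such that $\lambda$ is $c_{K}\sqrt{\epsilon}$-contracting on $\PP(K^{n})$, for some choice of attracting point and repelling hyperplane. This is the origin of the square root in the conclusion of the target lemma: a Lipschitz bound of order $\epsilon$ translates only into a contraction bound of order $\sqrt{\epsilon}$, which forces us to state the conclusion in terms of $(r(\Sigma),c(\Sigma)\sqrt{\epsilon})$-very proximality rather than $(r(\Sigma),c(\Sigma)\epsilon)$-very proximality.

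Next, I would invoke Lemma \ref{lem:contracting->very-proximal} with $G:=\Sigma$, applied to the newly contracting transformation $\lambda$. Let $\epsilon_{0}(\Sigma),r_{0}(\Sigma),c_{0}(\Sigma)$ denote the constants produced by Lemma \ref{lem:contracting->very-proximal} for $\Sigma$. Defining
\[
\epsilon(\Sigma):=\bigl(\epsilon_{0}(\Sigma)/c_{K}\bigr)^{2},
\]
the hypothesis $\epsilon<\epsilon(\Sigma)$ guarantees $c_{K}\sqrt{\epsilon}<\epsilon_{0}(\Sigma)$, so Lemma \ref{lem:contracting->very-proximal} delivers $f_{1},f_{2}\in\Sigma$ such that $\lambda f_{1}\lambda^{-1}f_{2}$ is $\bigl(r_{0}(\Sigma),\,c_{K}c_{0}(\Sigma)\sqrt{\epsilon}\bigr)$-very proximal. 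Setting $r(\Sigma):=r_{0}(\Sigma)$ and $c(\Sigma):=c_{K}c_{0}(\Sigma)$ yields the conclusion.

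The one subtle point I anticipate, and the only step in the argument that is not pure bookkeeping of constants, is that Lemma \ref{lem:contracting->very-proximal} as quoted demands the contracting element $g$ lie in $G$, whereas in our application $\lambda\in\PGL_{n}(K)$ is not assumed to belong to $\Sigma$. The resolution is that the proof of that lemma in \cite{BG:Dense_Free} uses the ambient group $G$ only through the construction of $f_{1},f_{2}$: the strong irreducibility of $G$ is invoked to displace the attracting point and repelling hyperplane of $g$ into sufficiently generic position, a mechanism that is insensitive to whether $g$ itself lies in $G$. I would expect verifying this reading of \cite[Proposition 3.8]{BG:Dense_Free} to be the only non-routine step; once confirmed, the composition above gives the lemma verbatim.
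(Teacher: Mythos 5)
Your proposal is correct and matches the paper's approach exactly: the paper offers no written argument beyond ``combining the above two lemmas,'' and the chain you describe --- converting the local $\epsilon$-Lipschitz bound into $c_K\sqrt{\epsilon}$-contraction via the second bullet of Lemma \ref{lem:contracting=Lipschitz} and then feeding that into Lemma \ref{lem:contracting->very-proximal} with $G=\Sigma$ --- is precisely that combination, with the bookkeeping of constants done correctly. Your observation that Lemma \ref{lem:contracting->very-proximal} must be read as applying to a contracting $\lambda$ not necessarily lying in $\Sigma$ (since strong irreducibility is used only to produce $f_1,f_2\in\Sigma$ placing the dynamics in general position) is the correct reading of \cite[Proposition 3.8]{BG:Dense_Free} and is tacitly assumed by the paper.
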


\noindent Finally, we will need the following two elementary lemmas. 
\begin{lemma} \label{lem:fix>triv}
A projective linear transformation $[B] \in \PGL_n(K)$ that fixes $n+1$ (projective) points in general position is trivial. 
\end{lemma}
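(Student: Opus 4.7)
The plan is to lift $[B]$ to a matrix $B\in\GL_n(K)$ and show that $B$ must be a scalar matrix, since that is exactly the condition for $[B]$ to be trivial in $\PGL_n(K)$.

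First I would unpack what ``$n+1$ points in general position'' means in $\PP(K^n)$: a choice of representing vectors $v_1,\dots,v_{n+1}\in K^n$ such that any $n$ of them are linearly independent. In particular $v_1,\dots,v_n$ form a basis of $K^n$, and when $v_{n+1}$ is expanded in this basis as $v_{n+1}=\sum_{i=1}^n c_i v_i$, general position forces every coefficient $c_i$ to be nonzero (otherwise the remaining $n$ vectors $v_1,\dots,\widehat{v_i},\dots,v_{n+1}$ would be linearly dependent).

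Next, since $[B]$ fixes each $[v_i]$, there exist scalars $\lambda_1,\dots,\lambda_{n+1}\in K^\times$ with $Bv_i=\lambda_i v_i$ for $1\le i\le n$ and $Bv_{n+1}=\lambda_{n+1}v_{n+1}$. Applying $B$ to the expansion of $v_{n+1}$ gives
\[
\lambda_{n+1}\sum_{i=1}^n c_i v_i \;=\; B v_{n+1}\;=\;\sum_{i=1}^n c_i\, Bv_i\;=\;\sum_{i=1}^n c_i\lambda_i v_i.
\]
Comparing coefficients in the basis $v_1,\dots,v_n$ yields $c_i\lambda_i=c_i\lambda_{n+1}$ for every $i$, and since $c_i\ne 0$ we conclude $\lambda_1=\lambda_2=\cdots=\lambda_n=\lambda_{n+1}$. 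Thus $B$ acts as a common scalar on a basis, hence $B$ is a scalar matrix and $[B]$ is trivial in $\PGL_n(K)$.

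There is really no main obstacle here; the only subtlety is making sure the correct notion of ``general position'' is used so that all the coordinates $c_i$ are forced to be nonzero, which is exactly what lets us transfer the single scalar $\lambda_{n+1}$ back to each $\lambda_i$.
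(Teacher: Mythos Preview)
Your argument is correct. The paper proves the same lemma by a slightly different, pigeonhole-style argument: since $v_1,\dots,v_n$ are independent eigenvectors, $B$ is diagonalizable; if the eigenspaces have dimensions $d_1,\dots,d_k$ summing to $n$, then distributing the $n+1$ eigenvectors among them forces some eigenspace of dimension $l$ to contain at least $l+1$ of the $v_i$'s, and general position then forces $l=n$. Your coordinate computation is just as elementary and arguably more explicit, since it directly exhibits the common eigenvalue rather than arguing by contradiction on the eigenspace decomposition; the paper's version, on the other hand, avoids singling out a particular basis and expanding $v_{n+1}$ in it. Neither approach has any real advantage over the other here.
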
 
\begin{proof}
By definition, $n+1$ vectors in $K^n$ represent projective points in {\it{general position}} if any $i$ of them span an $i$-dimensional subspace as long as $i \le n$. If $\{v_0, v_1, \ldots,v_n\}$ are $n+1$ eigenvectors of $B$ in general position then by counting considerations there must be at least one eigenspace $V<K^n$ of dimension $l$ containing at least $l+1$ of these vectors. If the vectors are in general position then $l=n$, so $B$ has only one eigenvalue and is hence projectively trivial. \end{proof}
\begin{lemma} \label{lem:si_large_orb}
Assume that $\rho: \Gamma \arrow \PGL_n(K)$ is a strongly irreducible projective representation. Then every orbit $\rho(\Gamma)\overline{v}$ contains a set of $n+1$ points in general position. 
\end{lemma}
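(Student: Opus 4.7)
The plan is to construct $v_1,\ldots,v_{n+1}$ inductively along the orbit $\rho(\Gamma)\overline{v}$, using strong irreducibility at two different levels. To start, I would produce $n$ linearly independent orbit points $v_1,\ldots,v_n$ by a standard argument: the $K$-linear span of $\rho(\Gamma)\overline{v}$ is a $\rho(\Gamma)$-invariant subspace containing $\overline{v}$, hence equals $K^n$ by ordinary irreducibility (which is implied by strong irreducibility via the equivalent formulation from Definition \ref{def:si} that every finite-index subgroup is irreducible, applied in particular to $\Gamma$ itself). Inductively, once $k<n$ linearly independent orbit points are chosen, the proper $k$-dimensional subspace they span cannot contain the whole orbit, so a further orbit point $v_{k+1}$ can be picked outside it.

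The crux is to produce an $(n+1)$-st orbit point $v_{n+1}$ such that every $n$-subset of $v_1,\ldots,v_{n+1}$ is still linearly independent. Equivalently, $v_{n+1}$ must avoid the $n$ projective hyperplanes $W_i$ defined as the $K$-span of $\{v_j : j\neq i,\ 1\le j\le n\}$, for $i=1,\ldots,n$. So everything reduces to the following auxiliary claim: if $\rho$ is strongly irreducible, then no orbit $\rho(\Gamma)\overline{v}$ is contained in any finite union of projective hyperplanes.

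Proving this auxiliary claim is the main obstacle, since strong irreducibility is phrased as a statement about $\rho(\Gamma)$-invariant objects rather than about orbits. I would argue by contradiction: if $\rho(\Gamma)\overline{v}\subset H_1\cup\cdots\cup H_m$, pass to the Zariski closure $Y$ of the orbit in $\PP(K^n)$. Since $\rho(\Gamma)$ acts by algebraic automorphisms, $Y$ is $\rho(\Gamma)$-invariant, and since $H_1\cup\cdots\cup H_m$ is Zariski closed, $Y\subset H_1\cup\cdots\cup H_m$ as well. Decompose $Y=Y_1\cup\cdots\cup Y_t$ into irreducible components; the group $\rho(\Gamma)$ permutes these components, so the kernel of the induced action on the index set $\{1,\ldots,t\}$ is a finite-index subgroup $\Gamma_0\le\Gamma$ stabilizing each $Y_i$. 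Each irreducible $Y_i$, being contained in a closed finite union of hyperplanes, must lie in a single $H_{j(i)}$; therefore the linear span of $Y_i$ is a proper $\rho(\Gamma_0)$-invariant subspace, contradicting the fact that every finite-index subgroup of $\rho(\Gamma)$ acts irreducibly. With the auxiliary claim in hand, $\rho(\Gamma)\overline{v}$ is not contained in $W_1\cup\cdots\cup W_n$, yielding the desired $v_{n+1}$ and completing the construction.
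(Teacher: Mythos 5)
Your proof is correct, but it takes a genuinely different route from the paper's. The paper passes to the Zariski closure $\H=\overline{\rho(\Gamma)}^{Z}$ of the \emph{group}: it first extracts a basis from the orbit of the connected-component subgroup $\Gamma^{(0)}$, and then rules out the containment of that orbit in $\bigcup_i V^i$ by observing that this containment would express the Zariski-connected group $\H^{(0)}(K)$ as a finite union of proper Zariski-closed subsets $H_i=\{h \mid h(v)\in V_i\}$ --- impossible by connectedness, which is the paper's chosen equivalent formulation of strong irreducibility (the third bullet of Definition \ref{def:si}). You instead take the Zariski closure of the \emph{orbit} in $\PP(K^n)$, decompose it into finitely many irreducible components permuted by $\rho(\Gamma)$, and use the second bullet of Definition \ref{def:si} (every finite-index subgroup is irreducible) to forbid the proper invariant span of a component. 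Both arguments are sound and of comparable length; yours isolates the cleaner and slightly more general auxiliary statement that no orbit of a strongly irreducible group lies in a finite union of projective hyperplanes (essentially the first bullet of Definition \ref{def:si}, transported from invariant unions of hyperplanes to arbitrary ones), and avoids any discussion of the algebraic group $\H$, while the paper's version yields the marginally stronger conclusion that the $\Gamma^{(0)}$-orbit alone already contains $n+1$ points in general position --- extra information that is not needed where the lemma is invoked. One small point worth making explicit in your write-up: the reduction of ``general position'' to avoiding the $n$ hyperplanes $W_i$ uses that a subset of a linearly independent set is linearly independent, so that checking the $n$-element subsets suffices; and the irreducible-component argument should note that the components of a Noetherian space are finite in number, which is what makes the stabilizer $\Gamma_0$ of finite index.
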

\begin{proof}
Without loss of generality we assume the field is algebraically closed. Let $\H = \overline{\rho(\Gamma)}^{Z}$ be the Zariski closure, $\H^{(0)}$ the connected component of the identity and $\Gamma^{(0)} = \rho^{-1} \left(\rho(\Gamma) \cap \H^{(0)}(K) \right)$. Strong irreducibility is equivalent to the irreducibility of $\rho \left(\Gamma^{(0)} \right)$, thus the $\rho(\Gamma^{(0)})$-orbit of every non trivial vector $v \in k^n$ contains a basis $B:=\{v = v_1,\ldots, v_n\}$. Let $V^i : = \operatorname{span} \{v_1,v_2, \ldots, \hat{v_i}, \ldots, v_n \}$ - the $i^{th}$ element missing. Assume by way of contradiction that $\rho(\Gamma^{(0)}) v \subset \cup_{i=1}^n V^i$.  Since taking a vector into a subspace is a Zariski closed condition we obtain, upon passing to the Zariski closure that
$$\H^{(0)}(K) = \cup_{i=1}^n H_i \qquad \qquad H_i = \{h \in \H^{(0)}(K) \ | \ h(v) \in V_i(K) \}$$
This is a union of Zariski closed sets, and since $\H^{(0)}$ is Zariski connected we have $H^{(0)}(K) = H_i$ for some $i$ contradicting irreducibility of $\H^{(0)}$. 
 \end{proof}

\section{Proof of the main theorem} 
In this chapter we reduce the proof of the Main Theorem \ref{thm:main_irs}, to our Main Technical Theorem \ref{thm:main_tec} concerning the existence of a certain cover. That theorem is is then proved in the following chapter.
\subsection{Reduction to the simple case} \label{sec:reduction_as}
At first we prove the Main Theorem \ref{thm:main_irs}, assuming the validity of its the simple version \ref{thm:one_ell}. 

The proof is preceded by two lemmas. As mentioned in Remark \ref{rem:ast_ammenable}, a group with a simple Zariski closure might be amenable. If it is not amenable though, it has no nontrivial normal amenable subgroups:
\begin{lemma} \label{lem:trivrad}
If $\Gamma < \GL_n(F)$ is a nonamenable linear group with a Zariski closure that is simple and has no finite normal subgroups, then $A = A(\Gamma) = \trivgp$. 
\end{lemma}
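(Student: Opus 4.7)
The plan is to set $A = A(\Gamma)$ and $H := \overline{A}^Z \subseteq \G := \overline{\Gamma}^Z$, and show $H = \trivgp$ via a three-step reduction; the real work is in the last step.

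First I would observe that $H$ is a normal algebraic subgroup of $\G$. Conjugation by any $\gamma \in \Gamma$ preserves $A$ and hence its Zariski closure $H$, so the algebraic normalizer $N_{\G}(H) = \{g \in \G : g H g^{-1} = H\}$ is a Zariski closed subgroup of $\G$ containing the Zariski dense subgroup $\Gamma$, and must therefore equal all of $\G$.

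Second, by the hypothesis that $\G$ is simple and has no finite normal subgroups, the only algebraic normal subgroups of $\G$ are $\trivgp$ and $\G$ itself. If $H = \trivgp$ then $A = \trivgp$ and we are done, so the entire content of the lemma lies in excluding $H = \G$. To rule this out, I would transport amenability through the adjoint representation $\Ad : \G \arrow \GL(\gc)$, which is faithful since $\ker(\Ad) = Z(\G)$ is a finite normal subgroup and therefore trivial by hypothesis. If $A$ were Zariski dense in $\G$, then $\Ad(A)$ would be an amenable subgroup of $\GL(\gc)$ whose Zariski closure is all of $\Ad(\G)$. In characteristic zero the Tits alternative applied to each finitely generated subgroup forces $\Ad(A)$ to be locally virtually solvable, so its Zariski closure $\Ad(\G)$ has solvable identity component; combined with simplicity and the absence of finite normal subgroups, this makes $\G$ trivial, contradicting $\Gamma \ne \trivgp$ (which is automatic from $\Gamma$ being nonamenable).

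The main obstacle is the positive characteristic case, flagged explicitly in Remark \ref{rem:ast_ammenable}: groups such as $\PSL_n$ over a locally finite field are amenable despite having a simple (non-solvable) Zariski closure, so the Tits-alternative route fails to produce solvability of $\overline{A}^Z$ directly from amenability of $A$. To finish, one must genuinely exploit the nonamenability of the ambient $\Gamma$. The natural tool is that a nonamenable linear group contains a nonabelian free subgroup $F_2$ (apply Tits' alternative to any finitely generated nonamenable subgroup of $\Gamma$); then $A \cap F_2$ is an amenable normal subgroup of $F_2$ and hence trivial, and the plan is to parlay this together with the Zariski density and normality of $A$ in $\Gamma$ into a contradiction with the simplicity and center-freeness of $\G$, thereby eliminating the $H = \G$ case uniformly in all characteristics.
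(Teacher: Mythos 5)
Your setup is fine: $\overline{A}^{Z}$ is a normal algebraic subgroup of $\G=\overline{\Gamma}^{Z}$, and simplicity plus the absence of finite normal subgroups reduces everything to excluding $\overline{A}^{Z}\supset\G^{(0)}$. Your characteristic-zero argument also works (though to pass from ``locally virtually solvable'' to ``Zariski closure has solvable identity component'' you should really invoke Tits' Theorem~2 for the whole, not necessarily finitely generated, group $A$, which in characteristic zero directly gives that $A$ is virtually solvable). The problem is that the positive-characteristic case --- which you correctly identify as the whole difficulty --- is never actually carried out: you end with ``the plan is to parlay this \dots into a contradiction,'' which is a statement of intent, not a proof. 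Worse, the specific lead you propose does not go anywhere: knowing that $A\cap F_2=\trivgp$ for some free subgroup $F_2<\Gamma$ is perfectly compatible with $A$ being Zariski dense and normal (a normal subgroup can meet a given free subgroup trivially), so no contradiction with simplicity or center-freeness follows from it.

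The missing idea, which is how the paper closes exactly this gap, is to enlarge $A$ rather than intersect it with something. Since $\Gamma$ is nonamenable it is not torsion (Schur: torsion linear groups are locally finite, hence amenable), so pick $\gamma\in\Gamma$ of infinite order and set $\Theta=\langle\gamma,A\rangle$. Then $\Theta$ is amenable (amenable-by-cyclic), its Zariski closure still contains $\G^{(0)}$, and it has no nontrivial normal solvable subgroup: the identity component of the Zariski closure of such a subgroup would be a connected normal solvable subgroup of the simple group $\G^{(0)}$, and what remains is finite normal, excluded by hypothesis. Now the full-strength Tits alternative in arbitrary characteristic (Theorem~2 of Tits' paper) says a linear group with no nonabelian free subgroup is solvable-by-locally-finite; with trivial solvable radical, $\Theta$ must be locally finite, contradicting the infinite order of $\gamma$. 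This is precisely the step that handles the $\PSL_n(\overline{\F_p})$-type examples you flag: such groups are locally finite, and adjoining one infinite-order element destroys that. Without this (or an equivalent) device, your proof is incomplete.
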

\begin{proof}
Let $\H = \overline{\Gamma}^{Z}$ be the Zariski closure. Since $\overline{A}^Z$ is normal in $\H$ our assumptions imply $\overline{A}^Z > \H^{(0)}$ whenever $A(\Gamma) \ne \trivgp$. Assume by contradiction that the former case holds. By Schur's theorem \cite[Theorem 4.9]{Weherfritz:linear_groups} every torsion linear group is locally finite and hence in particular amenable; since by assumption this is not the case for $\Gamma$ we can fix an element of infinite order $\gamma \in \Gamma$. The group $\Theta = \langle \gamma, A \rangle$, being amenable by cyclic, is still amenable and its Zariski closure still contains $\H^{(0)}$. The group $\Theta$ cannot contain a non-trivial normal solvable subgroup $S \lhd \Theta$. Indeed if $S$ were such a subgroup then $\left(\overline{S}^{Z} \right)^{(0)} \lhd \H^{(0)}$ would be a Zariski closed normal solvable subgroups, since the Zariski closure of a solvable subgroup is still solvable. Clearly there is no such group. Now, the Tits alternative \cite[Theorem 2]{Tits:alternative} implies that $\Theta$ is a locally finite group contrary to the existence of the infinite order element $\gamma$. 
\end{proof}
\begin{lemma} \label{lem:f.i.centralizer}
If $\Delta \leftIRS \Gamma$ is an IRS in a countable group $\Gamma$ that is almost surely finite then the centralizer $Z_{\Gamma}(\Delta)$ is of finite index in $\Gamma$ almost surely.
\end{lemma}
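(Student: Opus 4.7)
The plan is to reduce the statement to a counting argument on the support of $\mu$, then pass from the normalizer to the centralizer using the finiteness of $\Aut(\Delta)$.

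First I would observe that the set of finite subgroups of $\Gamma$ is countable: any subgroup of order at most $n$ is an $n$-element subset of the countable group $\Gamma$, and the collection of such subsets is countable, so taking a union over $n$ gives countably many finite subgroups in total. Since $\mu$ is assumed to be supported on finite subgroups, this means that $\mu$ is supported on a countable subset of $\Sub(\Gamma)$, and therefore $\mu$ is purely atomic:
\[
\mu=\sum_{\Delta\ \text{finite}} p(\Delta)\,\delta_\Delta,\qquad p(\Delta):=\mu(\{\Delta\}),\qquad \sum p(\Delta)=1.
\]

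Next, I would use the $\Gamma$-invariance of $\mu$: the weight function $p$ is constant on each conjugation orbit $\mathcal{O}(\Delta)=\{\gamma\Delta\gamma^{-1}:\gamma\in\Gamma\}$. Since the total mass of $\mu$ is finite, every orbit on which $p$ does not vanish must be finite. Consequently, for $\mu$-almost every $\Delta$, the orbit $\mathcal{O}(\Delta)$ is finite, i.e.\ $[\Gamma:N_\Gamma(\Delta)]=|\mathcal{O}(\Delta)|<\infty$.

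Finally, since $\Delta$ is finite $\mu$-almost surely, the conjugation action of $N_\Gamma(\Delta)$ on $\Delta$ yields a homomorphism $N_\Gamma(\Delta)\to\Aut(\Delta)$ with kernel exactly $Z_\Gamma(\Delta)$; as $|\Aut(\Delta)|<\infty$ we conclude $[N_\Gamma(\Delta):Z_\Gamma(\Delta)]<\infty$, and combining with the previous step,
\[
[\Gamma:Z_\Gamma(\Delta)]\le [\Gamma:N_\Gamma(\Delta)]\cdot|\Aut(\Delta)|<\infty
\]
for $\mu$-almost every $\Delta$, as desired. There is no real obstacle here; the only point worth underlining is the countability of the finite subgroups of $\Gamma$, which is what makes $\mu$ purely atomic and turns invariance plus $\mu(\Sub(\Gamma))=1$ into the finite-orbit conclusion.
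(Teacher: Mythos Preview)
Your proof is correct and follows essentially the same route as the paper: countability of finite subgroups forces the measure to be atomic, invariance then gives finite conjugacy classes (hence $[\Gamma:N_\Gamma(\Delta)]<\infty$), and the passage from $N_\Gamma(\Delta)$ to $Z_\Gamma(\Delta)$ uses that the latter is the kernel of the action on the finite group $\Delta$. The only cosmetic difference is that the paper phrases the finite-orbit step via ergodic components rather than your direct weight-counting argument, but the content is the same.
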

\begin{proof}
The collection of finite subgroups of $\Gamma$ is countable. Any ergodic measure that is supported on a countable set has finite orbits which in our case means that $\Delta$ has a finite conjugacy class almost surely. Thus $N_{\Gamma}(\Delta)$ is of finite index almost surely. The claim follows since $Z_{\Gamma}(\Delta)$ is the kernel of the action of $N_{\Gamma}(\Delta)$ on the finite group $\Delta$.
\end{proof}

\begin{proof} (Theorem \ref{thm:main_irs} follows from Theorem \ref{thm:one_ell})
 
Let $\eta_1: \Gamma \arrow \GL_n(\Omega)$ be an injective representation, realizing  our given group $\Gamma$ as a linear group over an algebraically closed field $\Omega$, let $\G_1 = \overline{\eta_1(\Gamma)}^{Z}$ be its Zariski closure which is by assumption connected. Replacing $\eta_1$ by $\eta_2 = \iota \compos \eta_1$ where  $\iota$ is the natural map to the semisimple quotient $\G_1/\Rad(\G_1)$ we obtain a new linear representation where $\G_2 = \overline{\eta_2(\Gamma)}^{Z}$ is semisimple. The map $\eta_2$ is no longer faithful but its kernel, being solvable and normal, is contained in the amenable radical. Moreover $\eta_2(A) < \eta_2(\Gamma)$ is a normal amenable subgroup.  

The Lie algebra $\gc := \Lie(G_2)$ decomposes as a direct sum of simple Lie algebras $\gc = \oplus_{\ell =1}^{L'} \hc_{\ell}$ which are invariant since $\G_2$ is connected. Thus we obtain a representation 
\begin{equation} \label{eqn:prod}
\prod_{\ell =1}^{L'} \chi_{\ell} = \Ad \compos \eta_2: \Gamma \arrow \prod_{\ell = 1}^{L'} \GL(\hc_{\ell}) < \GL(\gc),
\end{equation} 
where the representation $\chi_{\ell}: \Gamma \arrow \GL(\hc_{\ell})$ is defined by the above equation as the restriction to the $\ell^{th}$ invariant factor.  Let us denote by $\H_{\ell} = \overline{\chi_{\ell}(\Gamma)}^{Z}$ the Zariski closure of these representations; these are simple center free groups. By rearranging the order of the factors we may assume that $\chi_{\ell}(\Gamma)$ is nonamenable for every $1 \le \ell \le L$ and is amenable for every $L+1 \le \ell \le L'$. 

Let us set $A_{\ell}:= \ker(\chi_{\ell}) \lhd \Gamma$. The group $A^0 := \cap_{\ell =1}^{L} A_{\ell}$ is contained in the amenable radical $A(\Gamma)$, it is clearly normal and it is amenable as both $\ker(\Ad \circ \eta_2)|_{\Gamma^0}$ and 
$(\Ad \circ \eta_2)(\cap_{\ell =1}^{L} A_{\ell}) < \trivgp \times \ldots \times \trivgp \times \chi_{L+1}(\Gamma^0) \times \ldots \times \chi_{L'}(\Gamma^0)$ are. 
Let us set $A^{\ell} := \cap_{i=\ell+1}^{L} A_i$ for $0 \le \ell \le L$; thus $A^0 \lhd A^1 \lhd \ldots \lhd A^L = \Gamma$. Corresponding to this sequence of subgroups is the following stratification $\Sub(A^0) \subset \Sub(A^1) \subset \ldots \subset \Sub(A^L) =\Sub(\Gamma).$ Setting $S_0 = \Sub(A^0)$ and $S_{\ell} = \Sub(A^{\ell}) \setminus \Sub(A^{\ell -1}), \ 1 \le \ell \le L$ we can decompose $\Sub(\Gamma)$ as a disjoint union of the form $\Sub(\Gamma) = S_0 \sqcup S_1 \ldots \sqcup S_L$. Clearly $\chi_{\ell}(\Delta) \ne \trivgp$ for every $\Delta \in S_{\ell}$. 

Now fix $\mu \in \IRS(\Gamma)$ and decompose it according to this stratification by setting $a_{\ell} :=  \mu(S_{\ell}),$ and $\mu_{\ell}(B) := \mu(B \cap S_{\ell}) / a_{\ell},$ for every $0 \le \ell \le L$ and Borel subset $B \subset \Sub(\Gamma)$. Clearly $\mu = \sum_{\ell=0}^L a_{\ell} \mu_{\ell}$ as in the statement of the theorem. The groups $F_{\ell}$ for $1 \le \ell \le L$ are provided for upon application of theorem \ref{thm:one_ell} to the representations $\{\chi_{\ell} \ | \ 1 \le \ell \le L \}$. 

By definition $\mu_0$ is supported on $\Sub(A^0)$ so property {\bf{Amm}} is automatic. Similarly $\mu_{\ell}$ is supported on $S_{\ell}$ and $S_{\ell} \cap \Sub(\ker(\chi_{\ell})) = \emptyset$ so that $\mu_{\ell} \in \IRS^{\chi_{\ell}}(\Gamma)$ for every $1 \le \ell \le L$. Thus properties $\ell$-{\bf{Me-Dense}} and $\ell$-{\bf{Isom}} in theorem \ref{thm:main_irs} follow directly from the corresponding properties in Theorem \ref{thm:one_ell}. For property $\ell$-{\bf{Free}}. We can decompose $\mu_{\ell} = \mu_{\ell}^{\operatorname{NA}} + \mu_{\ell}^{\operatorname{A}}$ to an atomic part and a non-atomic part.  From Remark \ref{rem:free_follows} property $\mathbf{\chi}$-{\bf{Free}} follows directly form $\mathbf{\chi}${\bf{-Isom}} for the non-atomic part. Therefor we may assume that $\mu_{\ell}$ is completely atomic. This means that every ergodic component is supported on a finite orbit, or in other words that the IRS is supported on subgroups with finite index normalizers. So $\Delta \cap F$ also has a finite index normalizer in $F$, $\mu_{\ell}$ almost surely. Since $F$ itself is not finitely generated (by the proof of Theorem \ref{thm:one_ell}) it follows easily from properties of free groups that so is $\Delta \cap F$. 

It remains to be shown that it is possible to choose $L=1$ if and only if $\Gamma/A$ contains two nontrivial commuting almost normal subgroups. If $L \ge 2$ in the above construction then $\Gamma/A$ contains two commuting normal subgroups $\ker{\chi_1}$ and $\cap_{\ell = 2}^L \ker(\chi_\ell)$. 

Conversely let $\trivgp \ne M,N \lhd \Gamma$ be two almost normal subgroups such that  $M,N \not < A = A(\Gamma)$ but $[M,N] < A(\Gamma)$. Still assume by way of contradiction that the conclusion of the theorem holds with $L=1$. For convenience of notation we will set $F = F_1$. 

As explained in Example \ref{eg:fin}, both $M$ and $N$ appear with positive probability as instances of two invariant random subgroups $\mu_M,\mu_N \in \IRS(\Gamma)$. Thus by the statement of the theorem $F \cdot N = \Gamma$ and $O := F \cap M$ is a non-abelian free group. These two equations imply that $OA$ is an almost normal subgroup of $\Gamma/A$. Indeed let $\Gamma^0 = N_{\Gamma}(M)$, which is by assumption a finite index subgroup in $\Gamma$. If we set $F^0 = F \cap \Gamma^0$ then $F^0 N$ is of finite index in $\Gamma$. Clearly  $N < N_{\Gamma}(OA)$, and $F^0 < N_{\Gamma}(O)$ so that together $\Gamma^0A < N_{\Gamma}(OA)$. 

Now, applying the theorem again to the almost normal subgroup $OA$ we obtain, $\Gamma = F \cdot OA = FA$; but $A \cap F$ is an amenable normal subgroup of $F$ hence trivial. So that $\Gamma = F \ltimes A$ and $\Gamma/A \cong F$ is a free group which of course does not have two commuting almost normal subgroups, a contradiction. 
\end{proof}

\subsection{A good projective representation} \label{sec:good_lin_2}
Let $\chi: \Gamma \arrow \GL_n(F)$ be the simple representation given in Theorem \ref{thm:one_ell}. As explained in the beginning of Subsection \ref{sec:projective} the first step in proving Theorem \ref{thm:one_ell} is to fix a projective representation $\rho: \Gamma \arrow \PGL_n(K)$ over a topological field $K$ with enough elements that exhibit proximal dynamics on $\PP(K^n)$. This section is dedicated to this task. {\it{The notation fixed here will be used without further mention in the rest of the paper}}. 

When $\Gamma$ fails to be finitely generated $K$ is in general not a  local field. When this happens it is crucial to have a Zariski dense finitely generated subgroup $\Pi < \Gamma$ whose image under the representation will fall into the $\PGL_n(k)$ for some local subfield $k < K$. When $\Gamma$ is finitely generated one can take $k=K$ and $\Pi = \Gamma$. For the convenience of the reader we will try to indicate places where the argument in this case is more streamlined.  We start with a few lemmas. 
\begin{lemma} \label{lem:fg_subgroup}
Let $\Gamma < \GL_n(\Omega)$ be a non-amenable group with simple center free Zariski closure. Then there exists a non-amenable finitely generated subgroup $\Pi < \Gamma$ that has the same Zariski closure $\overline{\Pi}^{Z} = \overline{\Gamma}^{Z}$. Hence $\Pi$ is also non-amenable with a simple center free Zariski closure. 
\end{lemma}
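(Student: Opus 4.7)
The plan is to produce $\Pi$ by a standard Noetherian-induction argument on Zariski closed subsets, then invoke the Tits alternative to ensure non-amenability. Enumerate $\Gamma = \{\gamma_1,\gamma_2,\ldots\}$ and set $\Pi_k := \langle \gamma_1,\ldots,\gamma_k\rangle$, so that $\Pi_1 \le \Pi_2 \le \cdots$ is an increasing chain of finitely generated subgroups with $\bigcup_k \Pi_k = \Gamma$. Passing to Zariski closures yields an ascending chain $\overline{\Pi_1}^{Z} \subset \overline{\Pi_2}^{Z} \subset \cdots$ of Zariski closed subgroups of $\overline{\Gamma}^{Z}$. Since $\GL_n(\Omega)$ is a Noetherian space in the Zariski topology, this chain stabilizes at some index $N$, and because $\Gamma \subset \overline{\Pi_N}^{Z}$, we obtain $\overline{\Pi_N}^{Z} = \overline{\Gamma}^{Z}$. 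Set $\Pi := \Pi_N$.

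It remains to verify that $\Pi$ is non-amenable. Because $\Pi$ is finitely generated and linear, the Tits alternative applies: either $\Pi$ contains a non-abelian free subgroup, in which case we are done, or $\Pi$ is virtually solvable. In the latter case, take a solvable finite index subgroup $\Pi^{0} \le \Pi$. Then $\overline{\Pi^{0}}^{Z}$ is a Zariski closed solvable subgroup of finite index in $\overline{\Pi}^{Z} = \overline{\Gamma}^{Z}$, because taking Zariski closures preserves both solvability and the index bound. Consequently the connected component $(\overline{\Gamma}^{Z})^{(0)}$ is solvable, contradicting the assumption that $\overline{\Gamma}^{Z}$ is a non-trivial simple center-free algebraic group. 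Hence this case is impossible, and $\Pi$ is non-amenable.

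The final clause of the lemma—that the Zariski closure of $\Pi$ is simple and center-free—is automatic since we have arranged $\overline{\Pi}^{Z} = \overline{\Gamma}^{Z}$. I expect the only subtle point to be reconciling the various conventions for \emph{simple} algebraic group in positive characteristic (cf.\ Remark \ref{rem:ast_ammenable}), specifically making sure that the hypothesis really does force the identity component of $\overline{\Gamma}^{Z}$ to be a non-solvable simple algebraic group; once that is pinned down, the Tits dichotomy settles the matter cleanly. No projective dynamics or IRS-specific machinery is required here—the lemma is essentially a bookkeeping step that lets the later arguments use a finitely generated, hence $\PGL_n(k)$-valued for a local field $k$, Zariski dense subgroup.
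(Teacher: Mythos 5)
The second half of your argument (the Tits alternative plus Zariski density of $\Pi$ to rule out virtual solvability) is correct, and it is a legitimate alternative to the paper's route, which instead uses the fact that amenability is a local property to enlarge $\Pi$ until it is non-amenable. The problem is in the first half. A Noetherian topological space satisfies the \emph{descending} chain condition on closed sets, not the ascending one, so the chain $\overline{\Pi_1}^{Z} \subset \overline{\Pi_2}^{Z} \subset \cdots$ has no reason to stabilize; it can fail to stabilize even for closed subgroups (finite cyclic subgroups of increasing order in $\GL_1$). Worse, the conclusion you draw from it genuinely requires the non-amenability hypothesis, which you have not used at that point: for $\Gamma = \PSL_n(\overline{\F_p})$ (cf.\ Remark~\ref{rem:ast_ammenable}) every finitely generated subgroup is finite, so $\overline{\Pi_k}^{Z} = \Pi_k$ is a strictly increasing, never-stabilizing chain of closed subgroups and no finitely generated subgroup is Zariski dense, even though the Zariski closure is simple and center free.

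To repair the first half you need essentially the paper's argument: the dimensions $\dim \overline{\Pi_k}^{Z}$ are non-decreasing and bounded by $n^2$, hence stabilize, so the identity components $\bigl(\overline{\Pi_k}^{Z}\bigr)^{(0)}$ stabilize at some connected group $\I^{(0)}$; non-amenability together with Schur's theorem (a torsion linear group is locally finite, hence amenable) provides an element of infinite order, so $\dim \I^{(0)} \ge 1$; every $\gamma \in \Gamma$ then normalizes $\I^{(0)}$, hence so does $\H = \overline{\Gamma}^{Z}$, and simplicity forces $\I^{(0)} = \H^{(0)}$; finally $[\H : \H^{(0)}] < \infty$, so adjoining finitely many further generators yields $\overline{\Pi}^{Z} = \H$. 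Only after this does your Tits-alternative step (or the paper's locality-of-amenability step) finish the proof.
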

\begin{proof}
Let $\H = \overline{\Gamma}^{Z}$ and let $\H^{(0)}$ be the Zariski connected component of the identity in $\H$. Since $[\H:\H^{(0)}] < \infty$ it is enough to find a finitely generated subgroup $\Pi<\Gamma$ such that $\overline{\Pi}^{Z} > \H^{(0)}$.  

Let $\{\gamma_1,\gamma_2,\ldots, \gamma_m\}$ be elements of $\Gamma$ generating a subgroup $\Pi'$ with the property that $\mathbf{I} = \overline{\Pi'}^{Z}$ is of maximal possible dimension. By Schur's theorem \cite[Theorem 4.9]{Weherfritz:linear_groups} every torsion linear group is locally finite and in particular amenable. Since $\Gamma$ is by assumption nonamenable it contains an element $\gamma_1 \in \Gamma$ of infinite order. The group  $\overline{\langle \gamma_1 \rangle}^{Z}$ is at least one dimensional and hence, also $\dim(\I) \ge 1$. Now by our maximality assumption, for every $\gamma \in \Gamma$ we have $\left(\overline{\langle \I, \gamma \rangle}^{Z}\right)^{(0)} = \I^{(0)}$ and in particular $\gamma$ normalizes $\I^{(0)}$. Since $\Gamma$ has a simple center free Zariski closure that implies $\I^{(0)} = \H^{(0)}$. 

A group is amenable if and only if all of its finitely generated subgroups are amenable. If every finitely generated group containing $\Pi$ were amenable this would imply that $\Gamma$ is amenable too, as the union of all these groups. Thus, after possibly replacing $\Pi$ with a larger finitely generated subgroup, we may always assume that $\Pi$ is non-amenable. This concludes the proof. 
\end{proof}
\begin{proposition} \label{prop:good_rep}
Let $\Pi$ be a finitely generated group and $\chi: \Pi \arrow \GL_n(f)$ be a representation with a simple center free Zariski closure. Then there is a number $r > 0$, a local field $k$  an embedding $f \hookrightarrow k$, an integer $n$, and a faithful strongly irreducible projective representation $\phi :\mathbb{\H}(k) \rightarrow \PGL_{n}(k)$ defined over $k$, such that for any $\epsilon \in \left(0,\frac{r}{2} \right)$ there is $g \in \Pi$ for which $\chi(g) \in \H^{(0)}(k)$ and $\phi \compos \chi (g)$ acts as an $(r,\epsilon )$-very proximal transformation on $\mathbb{P}(k^{n})$.
\end{proposition}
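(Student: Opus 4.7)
The plan is to follow the classical Tits strategy, refined to the non-finitely-generated setting in \cite{BG:Topological_Tits}, extracting a local field $k$ with a strongly irreducible projective representation, and then producing a very proximal element via Lemma \ref{lem:contracting->very-proximal}. The natural candidate for $\phi$ is (the projectivization of) the adjoint representation of $\H$, and the natural candidate for the proximal element is a high power of a suitable element whose semisimple part has a dominant eigenvalue under a well-chosen absolute value.

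First I would construct the local field. Since $\Pi$ is finitely generated, the entries of $\chi(\Pi)$ lie in a finitely generated subfield $f_0 \subset f$. By Schur's theorem $\chi(\Pi)$ cannot be locally finite (otherwise its Zariski closure $\H$ would be too, contradicting simplicity and positive dimension of $\H$), so there exists $\gamma_0 \in \Pi$ with $\chi(\gamma_0)$ of infinite order, whose characteristic polynomial has an eigenvalue $\lambda$ that is not a root of unity. Standard absolute-value-extension arguments \cite[XII.4]{Lang:algebra}, in the spirit of \cite{Tits:alternative}, produce an absolute value on an algebraic closure of $f_0$ under which $|\lambda|>1$; completing yields the local field $k$, and after further algebraic extension we may assume $f \hookrightarrow k$.

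Next I would choose the projective representation. Take $\phi$ to be the projectivization of the adjoint representation $\Ad : \H \to \GL(\hc)$, with $\hc := \Lie(\H)$. Since $\H$ is center-free, $\Ad$ is faithful; since $\H$ is simple, $\hc$ is a simple $\H^{(0)}$-module, so restriction of $\Ad$ to $\H^{(0)}$ is irreducible, which by Definition \ref{def:si} makes $\phi : \H(k) \to \PGL_n(k)$ (with $n := \dim \hc$) strongly irreducible. It is algebraic, hence defined over $k$.

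For the very proximal element, replace $\gamma_0$ by a power so that $\chi(\gamma_0) \in \H^{(0)}(k)$ and so that $\Ad(\chi(\gamma_0))$ has a strictly dominant eigenvalue modulus on $\hc(k)$ — this follows from $|\lambda|>1$ together with the Jordan decomposition of $\chi(\gamma_0)$ and the structure of adjoint weights of the simple group $\H$, which guarantees some root evaluated on the semisimple part of $\chi(\gamma_0)$ has modulus strictly larger than $1$. Then the Cartan-ratio criterion underlying Lemma \ref{lem:contracting=Lipschitz} shows that $\phi \circ \chi(\gamma_0^N)$ is $\epsilon_N$-contracting on $\PP(k^n)$ with $\epsilon_N \to 0$. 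Since $\phi \circ \chi(\Pi)$ is strongly irreducible (by Zariski density of $\chi(\Pi)$ in $\H$), Lemma \ref{lem:contracting->very-proximal} provides $f_1, f_2 \in \Pi$ such that $g := \gamma_0^N f_1 \gamma_0^{-N} f_2 \in \Pi$ has $\phi \circ \chi(g)$ that is $(r(\H), c(\H)\epsilon_N)$-very proximal. Setting $r := r(\H)$ and choosing $N$ large enough that $c(\H)\epsilon_N < \epsilon$ delivers the required element.

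The main obstacle is guaranteeing the adjoint spectral gap under the chosen absolute value — that the dominant modulus in the spectrum of $\Ad(\chi(\gamma_0))$ is strictly separated from the rest. This is the geometric heart of Tits' construction and is tied to the careful pairing of $k$ with $\gamma_0$ and to the adjoint weight structure of the simple algebraic group $\H$. A secondary bookkeeping issue is that $r$ must be uniform in $\epsilon$; this is automatic here since $r(\H)$ in Lemma \ref{lem:contracting->very-proximal} depends only on the Zariski closure, and the $\epsilon$-dependence is absorbed entirely into the choice of the exponent $N$.
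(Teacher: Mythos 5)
The paper does not actually prove this proposition: its entire ``proof'' is a citation to \cite[Theorem 4.3]{BG:Topological_Tits} (also \cite[Theorem 7.6]{GG:primitive}), plus the one-line remark that faithfulness of $\phi$ follows from simplicity of the Zariski closure. So you are reproving a quoted black box, and your outline does follow the same Tits/Breuillard--Gelander strategy: extract a local field in which some eigenvalue has modulus $>1$, choose a strongly irreducible projective representation, and convert contraction into very-proximality via Lemma \ref{lem:contracting->very-proximal}.

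However, the step you yourself flag as ``the main obstacle'' is a genuine gap, not a routine verification. Knowing that $\chi(\gamma_0)$ has one eigenvalue $\lambda$ with $|\lambda|>1$ only tells you that some root evaluated at the semisimple part $s$ of $\chi(\gamma_0)$ has modulus $>1$; it does not give you that the maximal modulus among the adjoint weights of $s$ is attained on a one-dimensional generalized eigenspace. If $s$ is singular --- e.g. $\operatorname{diag}(t,t,t^{-2})$ in $\SL_3$, where the maximal adjoint eigenvalue modulus $|t|^3$ is attained by two distinct roots --- then no power of $\gamma_0$ is $\epsilon$-contracting in the adjoint representation and your $\epsilon_N$ does not tend to $0$. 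Repairing this requires either perturbing $\gamma_0$ inside $\Pi$ so that its semisimple part becomes regular (using Zariski density, as in Tits's original argument), or replacing $\Ad$ by an irreducible representation whose highest restricted weight over $k$ has multiplicity one and in which the image of $\Pi$ is unbounded --- which is what \cite{BG:Topological_Tits} actually does; note in addition that $\Ad$ itself need not be proximal for non-split $k$-forms of $\H$, so the blanket choice $\phi=\PP(\Ad)$ is not always adequate. The remaining points of your outline (faithfulness and strong irreducibility of $\phi$ restricted to $\H^{(0)}$, passing to a power to land in $\H^{(0)}(k)$, and the uniformity of $r$ coming from Lemma \ref{lem:contracting->very-proximal}) are fine.
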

 \begin{proof} 
See \cite[Theorem 4.3]{BG:Topological_Tits} (also \cite[Theorem 7.6]{GG:primitive})  for a much more general statement. The faithfulness follows from the simplicity of the Zariski closure. \end{proof}

{\it{If $\Gamma$ is finitely generated}} we apply the above theorem with $f = F, \Pi = \Gamma$ and {\it{fix once and for all}} the local field $k = K$, the representation 
 $$\rho = \phi \compos \chi: \Gamma = \Pi \arrow \PGL_n(k) = \PGL_n(K),$$ and an element $g \in \Gamma$ such that $\rho(g) \in \left(\overline{\Gamma}^Z\right)^{(0)}$ and $\rho(g)$ is $(r,\epsilon)$-very proximal where $\epsilon$ is chosen so as to satisfy the conditions of Lemma \ref{fix}. Namely $r \ge c_1 \epsilon^{2/3}$, where $c_1$ is the constant, given in that lemma. We denote by $\overline{v}_{g^{\pm 1}}, \overline{H}_{g^{\pm 1}}$ the attracting and repelling points and hyperplanes associated to $g$ in that lemma. 

{\it{If $\Gamma$ is not finitely generated}} then we apply Lemma \ref{lem:fg_subgroup} and fix, once and for all, a nonamenable finitely generated subgroup $\Pi < \Gamma$ such that $\chi(\Pi)^{Z} = \chi(\Gamma)^{Z}$.  Let $f<F$ be the finitely generated subfield generated by the matrix coefficients of $\chi(\Pi)$. Applying Proposition \ref{prop:good_rep} to $\chi|_{\Pi}: \Pi \arrow \PGL_n(f)$ we obtain a number $r > 0$, a local field $k$  an embedding $f \hookrightarrow k$, an integer $n$, and a faithful strongly irreducible projective representation $\phi :\mathbb{\H}(k) \rightarrow \PGL_{n}(k)$ defined over $k$, such that for any $\epsilon \in \left(0,\frac{r}{2} \right)$ there is $g \in \Pi$ for which $\chi(g) \in \H^{(0)}(k)$ and $\phi \compos \chi (g)$ acts as an $(r,\epsilon )$-very proximal transformation on $\mathbb{P}(k^{n})$. Let  $\rho := \phi \compos \chi: \Pi \arrow \PGL_n(k)$. 

Setting $K = k \otimes_{f} F$, the absolute value on the local field $k$ extends to an absolute value on the extension field $K$ \cite[XII, 4, Th. 4.1, p. 482]{Lang:algebra}. The new field $K$ is not locally compact any more and if the original field $k$ was non-Archimedean then the corresponding extension of the discrete valuation to $K$ is a real valuation that need no longer be discrete. However $K$ is still a {\it{complete}} valued field. The representation $\phi$, defined as it is over $k$, extends to a representation which we will still call by the same name $\phi: \H(K) \arrow \PGL_n(K)$ which gives rise to an extension $\rho = \phi \circ \chi: \Gamma \arrow \PGL_n(K)$. 

\subsection{Definition of the stabilizer topology and the main technical theorem} \label{sec:stab_top}
Let $\chi: \Gamma \arrow \GL_n(F)$ be the group representation given in Theorem \ref{thm:one_ell}. We fix all the notation defined in Subsection \ref{sec:good_lin_2} and in particular the projective representation $\rho:\Gamma \arrow \PGL_n(K)$. The relevant class of invariant random subgroups appearing in Theorem \ref{thm:one_ell} is: 
$$\IRS^{\chiNF}(\Gamma) = \left\{\mu \in \IRS(\Gamma) \ \left | \ \chi(\Delta) \ne \trivgp {\text{ for $\mu$-almost every }}\Delta \in \Sub(\Gamma) \right. \right \}$$
Note that by the construction of the projective representation in the previous section $\ker(\rho) = \ker(\chi)$ so we can equivalently refer to the above class as $\IRS^{\rho}(\Gamma)$. 

Let $\Eg^{f.g.}$ be the collection of finitely generated $\IRS^{\chiNF}$-essential subgroups. Namely these subgroups that are finitely generated and $\mu$-essential for some $\mu \in \IRS^{\chiNF}$. Clearly this countable collection of subgroups covers $\IRS^{\chiNF}$. All of our main theorems will follow from the following theorem asserting the existence of a sufficiently good refinement of this cover.  The following theorem is the main technical theorem of the current paper, we defer its proof to the following chapter.   
\begin{theorem} \label{thm:main_tec}
There exists a collection of subgroups $\Hg = \{\Theta_1,\Theta_2, \ldots \} \subset \Eg^{f.g.}$ and a list of elements $\{f(p,q,\gamma) \ | \ p,q \in \N, \gamma \in \Gamma \}$ with the following properties.
\begin{enumerate}
\item \label{itm:cov} $\Eg^{f.g.} <_{\IRS^{\chiNF}} \Hg$, (see Definition \ref{def:refine_cov}).
\item \label{itm:pp} $\{f(p,q,\gamma) \ | \ p,q \in \N, \gamma \in \Gamma\}$ are independent.
\item \label{itm:double_cos} $f(p,q,\gamma) \in \Theta_q \gamma \Theta_p, \ \ \forall p,q \in \N, \gamma \in \Gamma$.
\end{enumerate}
\end{theorem}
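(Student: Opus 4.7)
The plan is to prove Theorem~\ref{thm:main_tec} in two stages: first, construct a refined cover $\Hg$ consisting of finitely generated essential subgroups whose projective images are rich enough to support a projective ping-pong, and second, inductively extract the independent family $\{f(p,q,\gamma)\}$ from the prescribed double cosets.

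For the first stage, the key drivers are the locally essential lemma (Lemma~\ref{lem:loc_ess}) together with the recurrence corollary (Corollary~\ref{cor:loc_rec}). Given $\mu \in \IRS^{\chiNF}(\Gamma)$ and a $\mu$-essential $\Sigma \in \Eg^{f.g.}$, $\mu$-almost every $\Delta \in \Env(\Sigma)$ is both $\mu$-locally essential and recurrent, so any finitely generated $\Theta$ with $\Sigma < \Theta < \Delta$ is automatically $\mu$-essential. Recurrence applied with the preassigned very proximal element $g \in \Pi$ of Subsection~\ref{sec:good_lin_2}, combined with the assumption $\chi(\Delta)\ne\trivgp$, forces $\Delta$ to contain conjugates $g^{-n}\delta g^n$ of some $\delta \in \Delta$ with $\rho(\delta)\ne e$; for large $n$, the $\rho$-images of these conjugates exhibit strong contracting dynamics concentrated near the canonical attractor of $\rho(g)$. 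Via Lemmas~\ref{lem:contracting=Lipschitz} and~\ref{lem:contracting->very-proximal}, and by enlarging with finitely many further elements of $\Delta$ (again provided by recurrence) so as to break any stabilized finite union of hyperplanes, we obtain a finitely generated $\Theta$ with $\Sigma < \Theta < \Delta$ such that $\rho(\Theta)$ is strongly irreducible and contains a very proximal element. Taking $\Hg$ to be the countable family of all such extensions, indexed by $\Sigma \in \Eg^{f.g.}$ together with a countable set of recurrence-return-time parameters, yields the required refinement $\Eg^{f.g.} <_{\IRS^{\chiNF}} \Hg$.

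For the second stage, enumerate $\N \times \N \times \Gamma = \{(p_i,q_i,\gamma_i) : i \in \N\}$ and inductively construct $f_i := f(p_i,q_i,\gamma_i) \in \Theta_{q_i}\gamma_i\Theta_{p_i}$ together with attracting and repelling neighborhoods for $\rho(f_i)$ in $\PP(K^n)$, maintaining the ping-pong configuration of Lemma~\ref{lem:ping-pong} at every step. Because $\rho(\Theta_{p_i})$ and $\rho(\Theta_{q_i})$ are strongly irreducible and contain very proximal elements, Lemmas~\ref{lem:lip->very-proximal},~\ref{lem:contracting->very-proximal}, and~\ref{lem:si_large_orb} produce very proximal elements $h_p \in \Theta_{p_i}$ and $h_q \in \Theta_{q_i}$ whose attracting and repelling data can be positioned outside the finitely many neighborhoods already reserved by $f_1,\dots,f_{i-1}$. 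Setting $f_i := h_q^{m}\gamma_i h_p^{n}$ for sufficiently large $m,n$, Lemma~\ref{fix} ensures that $f_i$ is itself very proximal, with attractor arbitrarily close to that of $h_q$ and repeller close to the $\gamma_i$-transport of the repeller of $h_p$. Applying Lemma~\ref{lem:ping-pong} to the cumulative ping-pong tuple yields independence of $\{\rho(f_i)\}$ in $\PGL_n(K)$, which lifts through $\rho$ to independence of $\{f_i\}$ in $\Gamma$.

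The main obstacle lies in the inductive avoidance required in the second stage: the new attractor/repeller pair of $f_i$ must remain disjoint from the finitely many pairs already committed by previous $f_j$'s, even after transport by the arbitrary element $\gamma_i$, which can in principle push an attractor of $h_p$ close to a repeller of $h_q$ and invalidate the proximality estimates coming from Lemma~\ref{fix}. The resolution rests on Lemma~\ref{lem:si_large_orb}: orbits under a strongly irreducible action contain points in general position, so the family of attractors and repellers realizable by very proximal elements of $\Theta_{p_i}$ and $\Theta_{q_i}$ is Zariski dense in $\PP(K^n)$. This abundance—together with the freedom to take the powers $m,n$ as large as needed—provides enough slack to simultaneously steer past every forbidden region at every step, which is precisely the richness property that the first stage was designed to secure for each $\Theta_p \in \Hg$.
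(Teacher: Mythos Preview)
Your Stage~1 has a genuine gap. The sentence ``for large $n$, the $\rho$-images of these conjugates exhibit strong contracting dynamics'' is not correct: $\rho(g^{-n}\delta g^n)$ is a conjugate of $\rho(\delta)$, and conjugation cannot create proximality. If $\rho(\delta)$ is not proximal (e.g.\ it has finite order, or all eigenvalues of equal absolute value), neither is any conjugate. More seriously, you never establish that $\rho(\Delta)$ is strongly irreducible for $\mu$-almost every $\Delta$, and without this you cannot ``break any stabilized finite union of hyperplanes'' from inside $\Delta$. In the paper this is the content of Section~\ref{sec:Z_dense}: one pushes the IRS to the Grassmannian of $\mathfrak h = \Lie(\overline{\rho(\Gamma)}^Z)$ via $\Delta \mapsto \Lie(\overline{\rho(\Delta)}^Z)$ and invokes Furstenberg's lemma to conclude $\overline{\rho(\Delta)}^Z \supset H^{(0)}$ almost surely. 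Only then does strong irreducibility of $\rho(\Delta)$ follow, and only then can one manufacture a very proximal element inside an essential enlargement (Section~\ref{sec:essprox}, via $g^m x g^{-m}$ with $x$ in a strongly irreducible subgroup). Your recurrence-based shortcut does not replace this.

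Your Stage~2 is in the right spirit but misses the structural device that makes the paper's induction close. The paper does \emph{not} search for fresh very proximal $h_p,h_q$ at each step avoiding all prior data. Instead it first arranges, once and for all, that every $\Theta\in\Hg$ carries a very proximal $b_\Theta$ such that the entire family $\{b_\Theta\}$ already forms a ping-pong tuple (Proposition~\ref{prop:independent}, achieved by conjugating each $a_\Theta$ by a suitable power of $g$ and using the quantitative Poincar\'e estimate of Lemma~\ref{lem:poincare} to retain the covering property). The inductive hypothesis is then that $\{f_1,\dots,f_k,\ b_1^{n_1(k)},b_2^{n_2(k)},\dots\}$ is a ping-pong tuple; the new $f_{k+1}=b_q^{l_2}y\gamma x b_p^{l_1}$ is built so that its attracting and repelling sets lie \emph{inside} the current neighborhoods of $b_q^{\pm}$ and $b_p^{\pm}$, and then only $n_p,n_q$ need to be increased. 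This ``reservoir'' of pre-positioned $b_\Theta$'s is what guarantees automatic disjointness from all previous $f_j$ and all other $b_r$. Your appeal to Lemma~\ref{lem:si_large_orb} does not supply this: that lemma yields $n+1$ points in general position, not the claimed Zariski density of realizable attractors, and in any case it gives no control over the interaction between the $\gamma_i$-transport and the previously reserved regions.
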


Theorem \ref{thm:main_tec} enables us to define a basis of identity neighborhoods that will give rise to the stabilizer topology. As discussed right before Theorem \ref{thm:baby_case}, we can define a group topology by fixing a conjugation invariant sub-basis $\SB \subset \Sub(\Gamma)$ of identity neighborhoods consisting of subgroups. A basis for the topology near the identity will consist of finite intersections of the form $\Base = \left \{\cap_{i=1}^M \Delta_i \ | \ \Delta_i \in \SB \right \}$. Hence the resulting topology will be discrete if and only if there are subgroups $\left \{ \Delta_1,\Delta_2, \ldots, \Delta_M \right \} \subset \SB$ with $\cap_{i=1}^M \Delta_i = \trivgp$. 

\begin{definition} \label{def:H-top}
Let $\Gamma$ be a countable group. Given any collection of subgroups $\Hg \subset \Sub(\Gamma)$, the {\it{recurrent sub-basis associated with $\Hg$}} is the collection of all subgroups 
$$\SB_{\Hg} := \left \{\Delta \in \Rec(\Gamma) \ | \ \forall \gamma \in \Gamma, \ \exists \Theta \in \Hg {\text{ such that }} \Theta < \gamma \Delta \gamma^{-1} \right \}.$$
We denote by $\Base_{\Hg}$ and $\tau_{\Hg}$ the basis of identity neighborhoods and topology constructed from $\SB_{\Hg}$ respectively.  
\end{definition}

If $\Hg$ covers then this construction gives rise to a topology for which almost every subgroup is open.  
\begin{proposition} \label{prop:all_open} (Almost every subgroup is open)
Let $\Gamma$ be a countable group $I \subset \IRS(\Gamma)$ and $\Hg \subset \Sub(\Gamma)$ a family of subgroups that covers $I$. Then for every $\mu \in I$, $\mu$-almost every subgroup $\trivgp \ne \Delta \in \Sub(\Gamma)$ is open with respect to the topology $\tau_{\Hg}$. \end{proposition}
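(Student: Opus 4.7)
The plan is to reduce the statement to showing that $\mu$-almost every nontrivial $\Delta \in \Sub(\Gamma)$ actually belongs to the sub-basis $\SB_{\Hg}$ itself. Once this is established the conclusion is immediate: every $\Sigma \in \SB_{\Hg}$ is a (one-fold) finite intersection of sub-basis elements, hence $\Sigma \in \Base_{\Hg}$ is a basic identity neighborhood. Therefore any subgroup $\Delta \in \SB_{\Hg}$ already contains an identity neighborhood (namely $\Delta$ itself), and a subgroup of a topological group with this property is open. So the whole proof reduces to verifying the two defining conditions of $\SB_{\Hg}$ on a $\mu$-conull set.

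First I would handle recurrence. This is already packaged for us: Corollary \ref{cor:loc_rec} states that for any $\mu \in \IRS(\Gamma)$, $\mu$-almost every subgroup is recurrent, so the recurrence requirement in the definition of $\SB_{\Hg}$ is automatic outside a $\mu$-nullset $N_0$.

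Next I would verify the conjugate-covering condition. Since $\Hg$ covers $I$ and $\mu \in I$, Definition \ref{def:cover} gives that the set
\[
C_e \;:=\; \{\Delta \in \Sub(\Gamma) \mid \exists\, \Theta \in \Hg \text{ with } \Theta < \Delta\}
\]
is $\mu$-conull. For each fixed $\gamma \in \Gamma$, the conjugation-invariance of $\mu$ implies that the set $C_\gamma := \{\Delta \mid \gamma\Delta\gamma^{-1} \in C_e\} = \{\Delta \mid \exists\, \Theta \in \Hg,\ \Theta < \gamma\Delta\gamma^{-1}\}$ is also $\mu$-conull. Because $\Gamma$ is countable, the intersection $\bigcap_{\gamma \in \Gamma} C_\gamma$ is a conull set of subgroups satisfying the sub-basis condition for every group element simultaneously. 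Intersecting with the conull set of recurrent subgroups from the previous step yields a conull set of $\Delta \in \SB_{\Hg}$, which by the first paragraph are open in $\tau_{\Hg}$.

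I expect no serious obstacle: the work has been pushed into the right places upstream (the conjugation-stable definition of $\SB_{\Hg}$, and the Poincar\'{e} recurrence input of Corollary \ref{cor:loc_rec}), and what remains is a brief measure-theoretic manipulation. The only ingredient beyond unfolding definitions is the countability of $\Gamma$, which is exactly what allows the countable intersection $\bigcap_{\gamma \in \Gamma} C_\gamma$ to remain conull and convert the almost-sure covering of $\Delta$ by some $\Theta \in \Hg$ into simultaneous covering of all conjugates $\gamma\Delta\gamma^{-1}$.
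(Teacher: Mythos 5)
Your proof is correct and follows essentially the same route as the paper's: show that $\mu$-almost every subgroup lies in $\SB_{\Hg}$ by combining the recurrence statement of Corollary \ref{cor:loc_rec} with a countable intersection of the conull sets obtained by conjugating the covering condition, using the invariance of $\mu$ and the countability of $\Gamma$. The only addition is that you make explicit why membership in $\SB_{\Hg}$ implies openness, which the paper leaves implicit.
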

\begin{proof}
Fix $\mu \in I$, we will actually show that $\mu$-almost every subgroup is in $\SB_{\Hg}$. The proof that $\mu$ almost every subgroup is recurrent is identical to the one given in Corollary \ref{cor:loc_rec}. By our assumption that $\Hg$ covers $\mu$ so that $\mu  \left(\bigcup_{\Theta \in \Hg} \Env(\Theta) \right) = 1$. Consequently $\mu\left( \bigcap_{\gamma \in \Gamma} \gamma \left(\bigcup_{\Theta \in \Hg} \Env(\Theta) \right) \gamma^{-1} \right) = 1$, because $\Gamma$ is countable and $\mu$ is invariant. This completes the proof as we have shown that the two conditions for being in $\SB_{\Hg}$ are satisfied almost surely.
\end{proof}


\subsection{A proof that the main theorem follows from the existence of a good cover} \label{sec:proof_main}
In this section we prove Theorem \ref{thm:one_ell} assuming Theorem \ref{thm:main_tec}. The rest of the paper will be dedicated to the proof of the latter. 
\begin{proof} (of Theorem \ref{thm:one_ell} assuming Theorem \ref{thm:main_tec}).
Let $\chi: \Gamma \arrow \GL_n(F)$ be as in Theorem \ref{thm:one_ell} and apply Theorem \ref{thm:main_tec} which gives rise to the covering family $\Hg$ and the independent set $\{f(p,q,\gamma)\}$. Now:
\begin{itemize}
\item Construct the {\it{stabilizer topology}} by setting $\Me_{\chi} : = \tau_{\Hg}$,
\item Construct the free group by setting $F = \langle f(p,q,\gamma) \ | \ p,q \in \N ,\gamma \in \Gamma \rangle$. 
\end{itemize}
It follows from Proposition \ref{prop:all_open} that $\mu$-almost every subgroup is open in this topology for every $\mu \in \IRS^{\chi}(\Gamma)$. 

We first verify the equation appearing in property $\chi$-{\bf{Me-Dense}}:
 \begin{equation*} \label{eqn:density}
F \Delta = \Gamma, \qquad \forall \mu \in \IRS^{\chiNF}(\Gamma) {\text{ and for $\mu$-almost every }} \Delta \in \Sub(\Gamma).
\end{equation*}
 Fix $\mu \in \IRS^{\chiNF}$ and some $\gamma \in \Gamma$. By Property (\ref{itm:cov}) of Theorem \ref{thm:main_tec}, for $\mu$-almost every $\Delta \in \Sub(\Gamma)$ there is a $p \in \N$ such that $\Theta_p < \Delta$. Since $\mu$ is conjugation invariant, there is also almost surely some $q \in \N$ such that $\gamma^{-1} \Theta_q \gamma < \Delta$. Set $f = f(p,q,\gamma)$.  Now  by Property (\ref{itm:double_cos}) $f = \theta_q \gamma \theta_p$ for some $\theta_p \in \Theta_p, \theta_q \in \Theta_q$ so that we can write
$$\gamma = f \theta_p^{-1} \left( \gamma^{-1} \theta_q^{-1} \gamma \right) \in f \Theta_p \left( \gamma^{-1} \Theta_q \gamma \right) \subset f \Delta  \subset F \Delta.$$
Note that this {\it{does not show that $F$ is dense}} in the stabilizer topology. 

Proving $\chi$-{\bf{Non-Disc}} of the theorem will also show that this topology is non discrete. First note that for every $p$ the group $\Theta_p \cap F$ is a nonabelian free group as it contains the free group $\langle f(p,p,\theta) \ | \ \theta \in \theta_p \rangle < \Theta \cap F$. To proceed we have to show that for every every basic open set of the form $\bigcap_{j=1}^J \Delta_j$ where  $\{\Delta_1, \ldots, \Delta_J\} \subset \SB$ contains a nontrivial element of $F$. This is proved in precisely the same way as Theorem \ref{thm:baby_case}, only we have to take care that the elements $\delta_j$ appearing in that proof satisfy $\delta_j \in \Delta_j \cap F$. 

Let $\mu_1,\mu_2 \in \IRS^{\chi}(\Gamma)$. By Proposition \ref{prop:all_open} $\mu_i$-almost every subgroup of $\Gamma$ is open. Thus for $\mu_1 \times \mu_2$ almost every pair $(\Delta_1, \Delta_2) \in \Sub(\Gamma)^2$ both subgroups are open and by the previous paragraph, so is their intersection $\Delta_1 \cap \Delta_2$. This immediately implies that $\mu_1 \cap \mu_2 \in \IRS^{\chi}(\Gamma)$, where $\mu_1 \cap \mu_2$ is the intersection IRS defined in Subsection \ref{sec:ind_res}. 

To establish property $\mathbf{\chi-Isom}$ we have to show that the restriction map is an isomorphism. 
\begin{eqnarray*}
\Phi: (\Sub(\Gamma),\mu) & \arrow & (\Sub(F), \mu|_{F}) \\
\Delta & \mapsto & \Delta \cap F
\end{eqnarray*}
This is clearly a measure preserving $F$-invariant surjective map. By Souslin's theorem \cite[Corollary 15.2]{Kechris:calassical_dst} it is enough to show that $\Phi$ is essentially injective on $\Sub(\Gamma)$. In other words we wish to show that  $\Phi(\Delta) \ne \Phi(\Delta')$ for $\mu \times \mu$ almost every $(\Delta,\Delta') \in \Sub(\Gamma)^2$ such that $\Delta \ne \Delta'$. Applying Theorem \ref{thm:main_tec} (\ref{itm:cov}), we conclude that $\mu \times \mu$-almost surely there is some $\Theta \in \Hg$ with $\Theta < \Delta \cap \Delta'$. Since by assumption $\Delta \ne \Delta'$, after possibly switching the roles of $\Delta, \Delta'$ we can find an element $\delta \in \Delta \setminus \Delta'.$ Repeating the argument from the proof for property $\chi$-{\bf{Me-Dense}}, we can find a non-trivial element of $F$ in the coset $$f \in F \cap  \delta \Theta \subset F \cap \delta(\Delta \cap \Delta') = (F \cap \Delta) \cap (F \cap \delta \Delta').$$ Since two cosets of the same group are disjoint $f \in (F \cap \Delta) \setminus (F \cap \Delta') = \Phi(\Delta) \setminus \Phi(\Delta')$; establishing the injectivity. 
\end{proof}

\begin{note}
The dynamical interpretation of the argument used in the proof of property $\chi$-{\bf{Me-Dense}}, that was used twice in the above proof is as follows. By Lemma \ref{lem:IRS} every $\mu \in \IRS^{\chiNF}$ arises as the point stabilizer of a probability-measure preserving action $\Gamma \curvearrowright (X,\Bc,\mu)$.  Since $\Hg$ covers we can find, for every $\gamma \in \Gamma$ and for almost every $x \in X$, some $p,q$ such that $\Gamma_x > \Theta_p$ and $\Gamma_{\gamma x} > \Theta_q$. Hence $\gamma x = \theta_q \gamma \theta_p x, \ \forall \theta_p \in \Theta_p, \theta_q \in \Theta_q$. Theorem \ref{thm:main_tec} above provides a free subgroup with generators in each and every one of these double cosets. Thus for every probability-measure preserving action of $\Gamma$ with the property that $\chi(\Gamma_x) \ne \trivgp$ almost surely, $\Gamma$ and $F$ have the same orbits almost surely. \end{note}

\section{The construction of a good cover}
Finally we arrive at the proof of Theorem \ref{thm:main_tec}. The proof proceeds in a few steps in which we construct successive refinements for our cover of the collection $\IRS^{\chiNF}$. 

\subsection{A cover by infinite subgroups} \label{sec:essinf}
Our first goal is to show that $\IRS^{\chiNF}(\Gamma)$ admits a cover by essential subgroups whose $\rho$-image is infinite. This now follows as a direct corollary of the Bader-Duchesne-L\'{e}cureux Theorem \cite{BDL:amenable_irs}. In this section we show how. The following two sections are dedicated to my original construction of such a cover, which is geometric and of independent interest. 

Fix $\mu \in \IRS^{\chiNF}(\Gamma)$ and assume by way of contradiction that there is no such essential cover. In this case we shall further assume that $\left| \rho(H) \right| < \infty$ for every $\mu$-essential subgroup - just by conditioning on the event: $$\left \{\Delta \in \Sub(\Gamma) \ | \ \left| \rho(H) \right| < \infty, \ \forall H \in \Eg(\mu) \cap \Sub(\Delta) \right\}$$ which we assumed has positive probability. The locally essential lemma \ref{lem:loc_ess} now implies that $\rho(\Delta)$ is locally finite, and in particular amenable, $\mu$-almost surely. Now by the Bader-Duchesne-L\'{e}cureux theorem the IRS is contained in the amenable radical $A(\Gamma)$ almost surely, and the latter is trivial by Lemma \ref {lem:trivrad}. 

\subsection{A geometric construction of an infinite cover}
In this section we assume either that $\chr(F)=0$ or that $\Gamma$ is finitely generated. Countable, non finitely generated groups in positive characteristic will be treated separately in the next section. 

As in the previous section it is enough to prove that the only $\mu \in \IRS^{\chiNF}(\Gamma)$ supported on locally finite subgroups of $\Gamma$ is the trivial IRS. Given such $\mu$, it follows from the theory of linear groups that there exists a number $M$ such that $\rho(\Delta)$ admits a subgroup $\Lambda(\Delta)$ of index at most $M$ that fixes a point $s(\Delta) \in \PP(k^n)$, almost surely. Indeed if $\Gamma$ is linear in characteristic zero then every locally finite subgroup is virtually abelian, with a bound on the index of the abelian subgroup by Schur's theorem \cite[Corollary 9.4]{Weherfritz:linear_groups} and the claim follows from Mal'cev's Theorem \cite[Theorem 3.6]{Weherfritz:linear_groups}. If $\Gamma$ is finitely generated it follows from \cite[Corlloary 4.8]{Weherfritz:linear_groups} that there is a bounded index subgroup consisting of unipotent elements, which is therefore unipotent. 

Let $g \in \Gamma$ be any element whose $\rho$ image is very proximal and also satisfies the condition $r > c_1 \epsilon^{2/3}$ appearing in Lemma \ref{fix}. The existence of one such element is guaranteed at the end of Subsection \ref{sec:good_lin_2}. Adhering to the notation set in Subsection \ref{sec:projective} we donate by $\overline{v}_g, \overline{H}_g, \overline{v}_{g^{-1}}, \overline{H}_{g^{-1}}$ the canonical attracting points and repelling hyperplanes of $g,g^{-1}$ respectively.  

We claim that $\rho(\gamma)$ fixes $\overline{v}_g$ for every essential element $\gamma \in \Gamma$. Indeed, by the definition of an essential element, $\Env(\gamma) := \left\{\Delta \in \Sub(\Gamma) \ | \ \gamma \in \Delta\right \}$ is a set of positive measure. Thus by Poincar\'{e} recurrence, for almost all $\Delta \in \Env(\gamma)$ the sequence of return times 
$$N(\Delta, \Env(\gamma)) = \left \{n_k \ | \ \ g^{n_k} \Delta g^{-n_k} \in \Env(\gamma) \right \} = \left \{n_k \ | \ g^{-n_k} \gamma g^{n_k} \in \Delta \right \},$$ 
is infinite. Fix such a recurrent point $\Delta \in \Env(\gamma)$ which is, at the same time, locally finite. Many such exist because both properties hold with probability one - the first by Poincar\'{e} recurrence, the second by the locally essential lemma \ref{lem:loc_ess}. 

Now we follow the dynamics of the same elements on the projective space. Let $\Lambda = \Lambda(\Delta) < \Delta$ be the finite index subgroup of $\Delta$ fixing the point $, s = s(\Delta) \in \PP(k^n)$ as constructed in the beginning of this section. Let $\Omega := \Delta \cdot s$ be the (finite) orbit of this point. Assume first that $\Omega \cap \overline{H}_g =  \emptyset$. In this case $\rho(g^n) \omega \stackrel{{{n \arrow \infty}}}{\longrightarrow} \overline{v}_g$ for every $\omega \in \Omega$ and consequently 

\begin{eqnarray*}
\rho(\gamma)(\overline{v}_g) & = & \rho(\gamma) \left(\lim_{k \arrow \infty} \rho \left(g^{n_k} \right) s \right)  \\
& = & \lim_{k \arrow \infty} \rho \left(g^{n_k} \left( g^{-n_k} \gamma g^{n_k} \right) \right) s \in \lim_{k \arrow \infty}  \rho \left( g^{n_k}\right) \Omega =
\left \{ \overline{v}_g \right \}. 
\end{eqnarray*}

Consider now the general case when $\Omega \cap \overline{H}_g \ne \emptyset$.  In the next two paragraphs we will exhibit a sequence of very proximal elements $g_n$ who satisfy $\Omega \cap \overline{H}_{g_n} = \emptyset$, and $\lim_{n \arrow \infty} \overline{v}_{g_n} = \overline{v}_g$. By the previous paragraph the essential element $\gamma$ fixes $\overline{v}_{g_n}$ and since the fixed point set of $\rho(\gamma)$ is closed we conclude that $\rho(\gamma) \overline{v}_g = \overline{v}_g$. As desired. 

We can find an element $\sigma \in \Gamma$ such that the following conditions hold:
\begin{itemize}
\item $\rho(\sigma) (\Omega) \cap \overline{H}_g = \emptyset$.
\item $\rho(\sigma) \overline{v}_g \not \in \overline{H}_g$.
\item $\rho(\sigma^{-1}) \overline{v}_{g^{-1}} \not \in \overline{H}_{g^{-1}}$.
\end{itemize}
 Indeed we will even find such an element in $\Gamma^{(0)}$ - the intersection of $\Gamma$ with the connected component of its Zariski closure. The strong irreducibility of $\rho(\Gamma)$ is equivalent to the irreducibility of $\rho(\Gamma^{(0)})$ (see Definition \ref{def:si}). Thus for every $\omega \in \PP(K^n)$ the collection $D_{\omega} := \{\sigma \in \Gamma^{(0)} \ | \ \rho(\sigma) \cdot \omega \not \in \overline{H}_g \}$ is non empty and Zariski open. Since $\Gamma^{(0)}$ is Zariski connected the sets $D_{\omega}$ are open and dense. A similar dense open set can be constructed for the last condition $E := \{\sigma \in \Gamma^{(0)} \ : \ \rho(\sigma^{-1}) \overline{v}_{g^{-1}} \not \in \overline{H}_{g^{-1}} \}$. Our desired element can be chosen as any element in the intersection $\sigma \in \left(\cap_{\omega \in \Omega} D_{\omega} \right) \cap D_{\overline{v}_g} \cap E $. Having chosen such a $\sigma$ we let $d$ be the minimal distance attained in all the above relations, namely: 
 $$d \left(\rho(\sigma) \omega, \overline{H}_g \right) > d \ \forall \omega \in \Omega, \ d \left(\rho(\sigma) \overline{v}_g, \overline{H}_{g} \right) > d, d \left(\rho(\sigma^{-1}) \overline{v}_{g^{-1}}, \overline{H}_{g^{-1}} \right) > d.$$ 

Now consider the sequence of elements $g_n:= g^n \sigma$. If $\mathcal{A}(g^n)$, $\mathcal{R}(g^n)$, $\mathcal{A}(g^{-n})$, $\mathcal{R}(g^{-n})$ are attracting and replying neighborhoods for $\rho(g^n)$ then $\mathcal{A}(g_n) = \mathcal{A}(g^n)$, $\mathcal{R}(g_n) = \rho(\sigma^{-1}) \left(\mathcal{R}(g^n)\right)$, $\mathcal{A}(g_n^{-1}) = \rho(\sigma^{-1}) \left(\mathcal{A}(g^{-n})\right)$, $\mathcal{R}(g_n^{-1}) = \mathcal{R}(g^{-n})$ will be attracting and repelling neighborhoods for $g_n$. For large enough values of $n$ we can assume that the original neighborhoods are arbitrarily small $\mathcal{A}(g^n)\subset (\overline{v}_{g^n})_{\epsilon} = (\overline{v}_g)_{\epsilon}, \mathcal{R}(g^n) \subset (\overline{H}_g)_{\epsilon},$ etc'. Where $(\cdot)_{\epsilon}$ stands for the $\epsilon$-neighborhood. In particular if $d$ is the bound constructed in the previous paragraph and we choose $n$ large enough so that $d > \epsilon/2$. The above inequalities will imply immediately that 
$$Ac(g_n) \cap \Rc(g_n) = \Ac(g_n^{-1}) \cap \Rc(g_n^{-1}) = \Rc(g_n) \cap \Omega = \emptyset$$
The first two ensure that $\rho(g_n)$ is again very proximal. The last one shows that  $\Omega \cap \mathcal{A}(g_n) = \emptyset$ for every $n \ge N$. Thus as mentioned above we have $\rho(\lambda) \overline{v}_{g_n} = \overline{v}_{g_n}$ and passing to the limit this completes the proof that $\overline{v}_g$ is fixed by all essential elements. 

Since the representation $\rho: \Gamma \arrow \GL_n(K)$ is strongly irreducible Lemma \ref{lem:si_large_orb} yields $n+1$ points in general position in the orbit $\left \{\gamma_0 \overline{v}_g, \gamma_1 \overline{v}_g\ldots, \gamma_{n} \overline{v}_g \right \}$. Since $\gamma_i \overline{v}_g$ is the attracting fixed point for the very proximal element $\gamma_i g \gamma_i ^{-1}$, the above proof implies that all of these $n+1$ points are fixed by every essential element $\gamma \in \Gamma$. By Lemma \ref{lem:fix>triv} $\rho(\gamma)$ is trivial for every essential element. Finally the locally essential lemma \ref{lem:loc_ess} implies that $\rho(\Delta) = \trivgp$ for $\mu$-almost every $\Delta$ contradicting our assumption $\mu \in \IRS^{\chiNF}$. 

\subsection{Finitely generated groups in positive characteristic} \label{sec:countable_p}
Here we treat the case where the group $\Gamma$ is linear in positive characteristic and non finitely generated. Just as in the previous section it is enough to prove that the only $\mu \in \IRS^{\chiNF}(\Gamma)$ supported on locally finite subgroups of $\Gamma$ is the trivial IRS.  

The geometric strategy employed in the previous section to show that such IRS cannot exist is quite general. Loosely speaking two geometric properties of the action on the projective plane were used: 
\begin{enumerate}
\item \label{itm:LFP} Every locally finite subgroup fixes a point (or a finite number thereof)
\item \label{itm:enough_prox} There are enough proximal elements
\end{enumerate}
The problem is that when $\Gamma$ is linear over a field of positive characteristic and, at the same time, fails to be finitely generated the action on the projective space $\PP(K^n)$ no longer satisfies condition (\ref{itm:LFP}) above. For example if $F < K$ is a locally finite field then $\PGL_n(F) < \PGL_n(K)$ is locally finite, but does not fix any projective point in $\PP(K^n)$. We assumed explicitly that $\rho(\Gamma)$ is not an amenable group and in particular it cannot be of this form. Still $\rho(\Gamma)$ may contain many such locally finite subgroups and we have to show that the IRS is not supported on these.  

We will use the action of $\PGL_n(K)$ on its (affine) Bruhat-Tits building $X=X(K^n)$ and show that $X$ does satisfy the desired conditions. Note that in this generality the building $X$ is neither locally finite nor simplicial. This is due, respectively, to the facts that the residue field might no longer be finite and the value group might fail to be discrete. Bruhat-Tits buildings in this generality were treated by Brhuat-Tits \cite{BrT:72,BrT:84} but a geometric axiomatization of such affine buildings was given only later in the thesis of Anne Parreau \cite{Par:thesis}. 

We will follow the notation in the excellent survey paper \cite[Section 1]{RTW:survey}. In particular we set $V = K^n$ and denote by $\mathcal{N}(V,K)^{\operatorname{diag}}$ the set of all diagonalizable non-Archimedean norms on $V$ (these are discussed in some detail in \cite{Weil:basic_number_theory}) and we identify the building $X = \mathcal{X}(V,K) = \mathcal{N}(V,K)^{\operatorname{diag}} / \sim$ with the collection of all such norms modulo homothety $\norm{\cdot} \sim c \norm{\cdot}, \  c>0$. A norm is diagonalized by a basis $\underline{e} = \{e_1,\ldots, e_n\}$ of $V,$ if it is of the form $\norm{\sum a_i e_i}_{\underline{c}} = \max_i \{e^{c_i} |a_i| \}$ for some vector $\underline{c} = (c_1, \ldots,c_n)$. Thus the collection of all norms diagonalized by a given basis is identified with $\{\norm{\cdot}_{\underline{c}} \ | \ \underline{c} \in \R^n\} \cong \R^n$. The image of these Euclidean spaces in $\mathcal{X}(V,K)$ are the apartments of the building, they are $n-1$ dimensional spaces of the form $A_{\underline{e}} = \R^n/\langle(1,1,\ldots,1) \rangle$. The important features for us are that the building $X$ is still a $\CAT(0)$ space (see \cite[Section 1.1.3]{RTW:survey}) and it still has a finite dimensional Tits boundary.  In fact, since the base field is complete, the boundary $\partial X$ is naturally identified with the spherical building of $\PGL_n(K)$. Recall that the zero skeleton $\partial X^0$ of the spherical building consists of non-trivial proper subspaces of $V$. The higher dimensional simplexes correspond to flags of such subspaces. We will say that a vertex of $\partial X$ is of type $i$ if it corresponds to an $i$-dimensional subspace. 

%

These conditions are important because they are exactly these needed for the following theorem of Caprace-Monod.
\begin{proposition} (\cite[Corollary 3.4]{CM1}) \label{prop:CM}
Let $X$ be a $\CAT(0)$ space with finite dimensional Tits boundary. Then every locally finite group acting on $X$ fixes a point in $X \sqcup \partial X$.  
\end{proposition}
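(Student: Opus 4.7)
The plan is to pass from the locally finite group $G$ to the directed system of its finite subgroups and take a limit of their fixed point sets. By the Bruhat--Tits / Cartan fixed point theorem, every finite subgroup $F \leq G$ fixes a point in $X$: take any $F$-orbit (which is bounded because $F$ is finite) and pass to its circumcenter, which exists and is unique in a complete $\CAT(0)$ space and is therefore $F$-invariant. Thus $\Fix(F) \subset X$ is a nonempty closed convex subset, and because $G$ is locally finite the family
\[
\mathcal{F} \;=\; \{\Fix(F) : F \leq G \text{ finite}\}
\]
is filtering downward under inclusion: for any $F_1,F_2 \leq G$ the subgroup $\langle F_1, F_2 \rangle$ is again finite, and $\Fix(\langle F_1,F_2 \rangle) \subset \Fix(F_1) \cap \Fix(F_2)$. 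So $\mathcal{F}$ is a filter base of nonempty closed convex subsets of $X$, and any common point of $\mathcal{F}$ (whether in $X$ or on $\partial X$) is automatically $G$-fixed.

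First I would dispose of the easy case: if $\bigcap_{F \in \mathcal{F}} \Fix(F) \neq \emptyset$ in $X$, any point of the intersection is fixed by every finite subgroup, hence by $G$, and we are done inside $X$. The substantive case is when this intersection is empty; here one must manufacture a $G$-fixed point on $\partial X$, and this is the point at which the finite dimensionality of $\partial X$ enters. The philosophy, following Caprace--Monod, is that a filter base of nonempty closed convex subsets of a complete $\CAT(0)$ space always has a canonical asymptotic object: fixing a basepoint $x_0$, the nearest-point projections $\pi_{\Fix(F)}(x_0)$ are either bounded (yielding a common point in $X$ by a diagonal extraction of circumcenters) or escape to infinity, in which case the directions they point in cluster at a canonical subset $Z(\mathcal{F}) \subset \partial X$ of Tits-radius at most $\pi/2$.

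The main obstacle, and the place where the hypothesis on $\partial X$ is indispensable, is producing a canonical point out of $Z(\mathcal{F})$ and checking that it does not depend on the basepoint $x_0$. In general $\CAT(0)$ spaces (e.g.\ infinite dimensional Hilbert spaces, or Bruhat--Tits buildings over fields with non-discrete value group and large residue field, as occur in our setting) the boundary need not be compact and na\"ive circumcenter arguments in $\partial X$ may fail to converge. Finite dimensionality of $\partial X$ in the sense of Kleiner is exactly what ensures that any nonempty bounded subset of the Tits boundary of diameter strictly less than $\pi/2$ has a well-defined circumcenter, and more generally that a descending filter of closed convex subsets of the Tits boundary has a canonical ``limit point'' obtained by iterating circumcenters finitely many times, the iteration terminating because the geometric dimension strictly drops at each nontrivial step. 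Applying this to $Z(\mathcal{F})$ yields a canonical point $\xi \in \partial X$, and since every step of the construction is intrinsic to $\mathcal{F}$ (and $\mathcal{F}$ is $G$-invariant by construction: $gFg^{-1}$ is finite whenever $F$ is, so $g\cdot \Fix(F) = \Fix(gFg^{-1}) \in \mathcal{F}$), we conclude $g \cdot \xi = \xi$ for all $g \in G$, completing the proof.
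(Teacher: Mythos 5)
The paper offers no proof of this proposition: it is imported verbatim from Caprace--Monod \cite[Corollary 3.4]{CM1} and used as a black box, so there is no internal argument to compare yours against. What you have written is, in outline, exactly how Caprace--Monod derive it, namely from their structure theorem for filtering families of closed convex subsets of a complete $\CAT(0)$ space with finite-dimensional Tits boundary.

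Your reduction is correct and complete as far as it goes: local finiteness makes $\{\Fix(F)\}$ a downward-filtering family of nonempty (Bruhat--Tits) closed convex sets, every element of a locally finite group has finite order so a common point of the family in $X$ or in $\partial X$ is $G$-fixed, and $G$-invariance of the family is clear since $g\Fix(F)g^{-1}=\Fix(gFg^{-1})$. The substance you are deferring to is precisely the content of the cited corollary's engine: (i) if $\inf_{Y\in\mathcal F} d(x_0,Y)$ stays bounded along the filter, the projections $\pi_Y(x_0)$ form a Cauchy net (a $\CAT(0)$ comparison argument plus completeness), giving a point of $\bigcap\mathcal F$; (ii) otherwise the canonical asymptotic object is $\bigcap_{Y\in\mathcal F}\partial Y$, which Caprace--Monod show is a nonempty closed convex subset of the Tits boundary of intrinsic radius at most $\pi/2$, and the Balser--Lytchak iterated-circumcentre construction (terminating because geometric dimension drops) then produces a canonical point fixed by every isometry preserving $\mathcal F$. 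You have correctly located finite dimensionality as the hypothesis that makes step (ii) work and have noted that the construction is intrinsic, hence basepoint-independent and $G$-equivariant. So your proposal is a faithful reconstruction of the external proof rather than a new route; to make it self-contained you would need to actually prove (i) and (ii), which is the real work in \cite{CM1}.
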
 
Thus we may assume that almost every $\Delta \in \Sub(\Gamma)$ fixes some point $s(\Delta) \in X \sqcup \partial X$. Since the action of $\PGL_n(K)$ on $\partial X$ preserves types we may, and shall, assume that whenever $s(\Delta) \in \partial X$ then it is actually a vertex in the spherical building, of type $i(\Delta)$.

Let us write $\mu$ as a convex combination of IRS with disjoint supports as follows $\mu = c_X \mu_X + c_{1} \mu_{1} + c_{H} \mu_{H}$ where:
\begin{itemize}
\item $\mu_X$-almost every $\Delta \in \Sub(\Gamma)$ has a fixed point inside $X$. 
\item $\mu_{1}$-almost every $\Delta \in \Sub(\Gamma)$ does not have a fixed point in $X$ but it does fix a vertex of type $1$ in $\partial X$. 
\item $\mu_{H}$-almost every $\Delta \in \Sub(\Gamma)$ fixes neither a point in $X$ nor a vertex of type $1$ in $\partial X$. So that $s(\Delta) \in \partial X$ is a vertex of higher type. 
\end{itemize}
For $\mu_X$ almost very $\Delta \in \Sub(\Gamma)$ we insist that $s(\Delta) \in X$. Similarly for $\mu_1$ almost every $\Delta$ we take $s(\Delta)$ to be a vertex of type $1$. We will treat each of these measures separately. Note that only $\mu_X$ can be trivial as the other two give measure zero to the trivial group. Therefore what we have to prove is that $\mu_X (\{\trivgp\}) = 1$ and that the other two measures cannot exist and hence $c_X = 1, c_1 = c_H = 0$. The measure $\mu_1$ is easiest, $\mu_1$-almost every $\Delta \in \Sub(\Gamma)$ fixes a point in $\PP(V)$ and we argue exactly as in the previous section. 

For $\mu_X$ the argument is very similar. Let $g \in \Gamma$ be an element whose $\rho$ image is $(r,\epsilon)$-very proximal with attracting point $\overline{v}_g$ and repelling hyperplane $\overline{H}_g$ in the projective plane $\PP(V)$. Let $\tilde{v}_g = \langle v_g \rangle \in \partial X$ be the associated $g$ fixed point in the spherical building $\partial X$. $\rho(g)$ will act as a hyperbolic isometry on the building $X$. Just as in the previous section it would be enough to show that $\tilde{v}_g$, and hence also $\overline{v}_g$ is fixed by every $\mu_X$-essential element $\gamma \in \Gamma$. 

Pick $\Delta \in \Env(\gamma)$ that is simultaneously locally finite and recurrent in the sense that $g^{-n} \gamma g^{n} \in \Delta$ infinitely often. Let $s = s(\Delta) \in X$ be the corresponding fixed point. The element $\rho(g)$ acts as a hyperbolic element on $X$ with attracting point $\tilde{v}_g \in \partial X$. Thus for every sequence $n_i \in N(\Delta, \Env(\langle \gamma \rangle), g)$ the sequence $\rho(g^{n_i}) s$ consists of $\rho(\Delta)$ fixed points converging to the attracting point 
$\rho(g^{n_i}) s \stackrel{i \arrow \infty}{\arrow} \tilde{v}_g$. We conclude using the fact that set of $\rho(\gamma)$ fixed points is closed.

We turn to $\mu_H$, which is the hardest of the three. The following Lemma shows that $\mu_H$ cannot be supported on totally reducible subgroups. We first recall the definition.
\begin{definition} \label{def:tot_red}
A subspace of $V$ is called {\it{totally reducible}} if it decomposes as a direct sum of irreducible $\rho(\Delta)$ modules. We say that $\rho(\Delta)$ is totally reducible if $V$ itself is totally reducible as a $\rho(\Delta)$ module.
\end{definition}
\begin{lemma} \label{lem:tot_red->fp}
Let $K$ be a complete valued field, $\Delta$ a locally finite group and $\rho: \Delta \arrow \PGL_n(K)$ a totally reducible representation, all of whose irreducible components are of dimension $\ge 2$. Then $\rho(\Delta)$ fixes a point in the affine Bruhat-Tits building $X$ of $\PGL_n(K)$. 
\end{lemma}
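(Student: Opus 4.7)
My plan is to reduce to the irreducible case and then reassemble fixed norms into a single fixed norm class on $V$.

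First I would decompose $V = V_1 \oplus \cdots \oplus V_k$ into $\rho(\Delta)$-invariant irreducible pieces, as supplied by the totally reducible hypothesis, with $\dim V_j \ge 2$ for every $j$. For each $j$, the restriction gives a representation $\rho_j : \Delta \to \PGL(V_j)$ which acts on the Bruhat--Tits building $X_j = \mathcal{X}(V_j,K)$ of $\PGL(V_j)$. Since $X_j$ is $\CAT(0)$ with finite dimensional Tits boundary, Proposition \ref{prop:CM} produces a $\rho_j(\Delta)$-fixed point $p_j \in X_j \sqcup \partial X_j$.

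Next I would argue that $p_j$ cannot lie in $\partial X_j$. The boundary is the spherical building of $\PGL(V_j)$ (since $K$ is complete), whose vertices are proper nonzero subspaces of $V_j$ and whose simplices are flags. Any isometry that fixes an interior point of a spherical simplex fixes that simplex pointwise (the isometry permutes the vertices but must fix barycentric coordinates), so a $\rho_j(\Delta)$-fixed point in $\partial X_j$ would yield a $\rho_j(\Delta)$-fixed vertex, hence a $\rho_j(\Delta)$-invariant proper nonzero subspace of $V_j$, contradicting irreducibility (which is non-vacuous because $\dim V_j \ge 2$). Therefore each $p_j \in X_j$.

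Now I would combine. Pick representative diagonalizable non-Archimedean norms $\|\cdot\|_j$ on $V_j$ with $[\|\cdot\|_j] = p_j$, and define the max-norm $\|v_1 + \cdots + v_k\| := \max_j \|v_j\|_j$ on $V$. Concatenating diagonalizing bases shows $\|\cdot\|$ is itself diagonalizable, so it represents a point $p \in X$. For any $\delta \in \Delta$, choose a lift $\tilde{\rho}(\delta) \in \GL(V)$; it preserves each $V_j$ and scales $\|\cdot\|_j$ by some $\lambda_j(\delta) > 0$. Rescaling the lift by $\mu \in K^\times$ multiplies every $\lambda_j(\delta)$ by $|\mu|$, so each ratio $\mu_j(\delta) := \lambda_j(\delta)/\lambda_1(\delta)$ is independent of the lift, and $\mu_j : \Delta \to \R_{>0}$ is a well-defined group homomorphism into a torsion-free group. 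Since $\Delta$ is locally finite (hence torsion), $\mu_j \equiv 1$, meaning all $\lambda_j(\delta)$ coincide; consequently $\tilde{\rho}(\delta)$ scales $\|\cdot\|$ by a single factor and $\rho(\delta)$ fixes the class $p \in X$.

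The main obstacle I anticipate is the boundary-exclusion step: one must be careful that fixing a point in the Tits boundary really forces a fixed vertex of the spherical building, especially since the affine building need not be simplicial when the value group is not discrete. The spherical building at infinity remains a genuine simplicial complex, which is what makes the barycentric-coordinate argument go through, but this needs to be invoked cleanly. The scaling/cocycle bookkeeping in the assembly step is routine once one observes that the ambiguity in the lift is a global scalar, which is exactly what is killed by passing to the torsion-free quotient $(\R_{>0})^k/\text{diag}$.
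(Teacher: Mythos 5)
Your proof is correct and is essentially the paper's argument in slightly different packaging. The paper applies the Caprace--Monod fixed point theorem once to the product building $Y = Y_1 \times \cdots \times Y_l$ of $\prod \PGL(W_i)$ and rules out boundary fixed points via the join decomposition $\partial Y = \partial Y_1 * \cdots * \partial Y_l$; you apply it factor by factor, which amounts to the same thing. Likewise, your assembly step --- the max-norm together with the homomorphisms $\mu_j : \Delta \to \R_{>0}$ killed by torsion --- is exactly the paper's flat $R \cong \R^{l-1}$ of norm classes on which $\rho(\Delta)$ acts by translations, with the translation homomorphism $\rho(\Delta) \to \R^{l-1}$ being the tuple $(\log \mu_j)_j$; both are annihilated by local finiteness.

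One small justification in your boundary-exclusion step is off as stated: it is not true that an isometry fixing an interior point of a spherical simplex must fix the simplex pointwise (a reflection of a triangle fixes its barycenter while swapping two vertices, and barycentric coordinates do not separate vertices when the fixed point lies on a wall of symmetry). The correct and standard fix --- which the paper itself invokes when it reduces to fixed vertices of $\partial X$ --- is that $\PGL(V_j)$ acts on its spherical building by \emph{type-preserving} automorphisms, so the stabilizer of the support simplex of a fixed boundary point fixes each of its vertices, yielding the invariant proper subspace and the contradiction with irreducibility. With that one-line repair your argument is complete.
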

\begin{proof}
Assume that $V = W_1 \oplus W_2 \oplus \ldots \oplus W_l$ is the decomposition into $\rho(\Delta)$-irreducible representations. Since each $W_i$ is of dimension at least two we consider the affine building $Y = Y_1 \times Y_2 \times \ldots \times Y_l$ corresponding to the group $\PGL(W_1) \times \PGL(W_2) \times \ldots \times \PGL(W_l)$. The boundary of this product is the spherical join of the boundaries of the individual factors $\partial Y = \partial Y_1 * \partial Y_2 * \ldots * \partial Y_l$, see \cite[Definition 5.13, Corollary 9.11]{BH:Metric_Spaces}. $\partial Y$ embeds as a sub-building of $\partial X$. Since the group $\rho(\Delta)$ preserves the direct sum decomposition, it acts on this building. Being locally finite as it is, Proposition \ref{prop:CM} implies that $\rho(\Delta)$ has a fixed point in $Y \cup \partial Y$. If the fixed point is on the boundary we may assume again that it is a vertex; but $\partial Y^{0} = \partial Y_1^{0} \sqcup \ldots \sqcup \partial Y_l^{0}$ so that in fact we have a fixed vertex in $\partial Y_i$ for some $i$. This corresponds to a proper invariant subspace of $W_i$ which is impossible since the action of $\rho(\Delta)$ on $W_i$ is irreducible. So there is a $\rho(\Delta)$ fixed point $y = (y_1,y_2, \ldots, y_l) \in Y$. Each $y_i$ corresponds to a homothety class of norms, let $\norm{\cdot}_i$ be a norm on $W_i$ representing this homothety class. Consider the subspace of $X$ consisting of all homothety classes of norms that coincide with these fixed norms on each $W_i$.   
$$R = \{\norm{\cdot} \in \mathcal{N}(V,K)^{\operatorname{diag}} \ | \ \exists (d_1,d_2, \ldots, d_l) \in \R^l {\text{ s.t. }} \norm{w} = e^{d_i}\norm{w}_i \ \forall w \in W_i \} / \sim$$
This is clearly a convex $\rho(\Delta)$ invariant subset of the building isometric to a Euclidean space of dimension $l-1$. Since $\rho(\Delta)$ fixes $l$ points on the boundary of this Euclidean space it must fix the whole boundary. Hence it acts on $R$ by translations. This give a homomorphism $\rho(\Delta) \arrow \R^{l-1}$ which must be trivial as $\rho(\Delta)$ is locally finite. Thus $\rho(\Delta)$ fixes $R$ pointwise and in particular it fixes many points in $X$ in contradiction to our definition of the measure $\mu_H$. 

This last argument is illustrated in figure \ref{fig:apt}, inside one apartment, of the simplicial building associated with $\PGL_4(k)$ over a local field. The basis corresponding to this apartment $\{e_1,e_2,e_3,e_4\}$ was chosen to respect the splitting $W_1 = \langle e_1,e_2 \rangle, W_2 = \langle e_3,e_4 \rangle$ as well as the individual fixed norms, which are here just the standard norms on these two subspaces. As is customary in the simplicial setting the homothety classes of norms are represented by homothety classes of unit balls with respect to these norms. The vertices thus correspond to lattices over the integer rings.

\begin{figure}[ht] 
\centering \def\svgwidth{300pt}
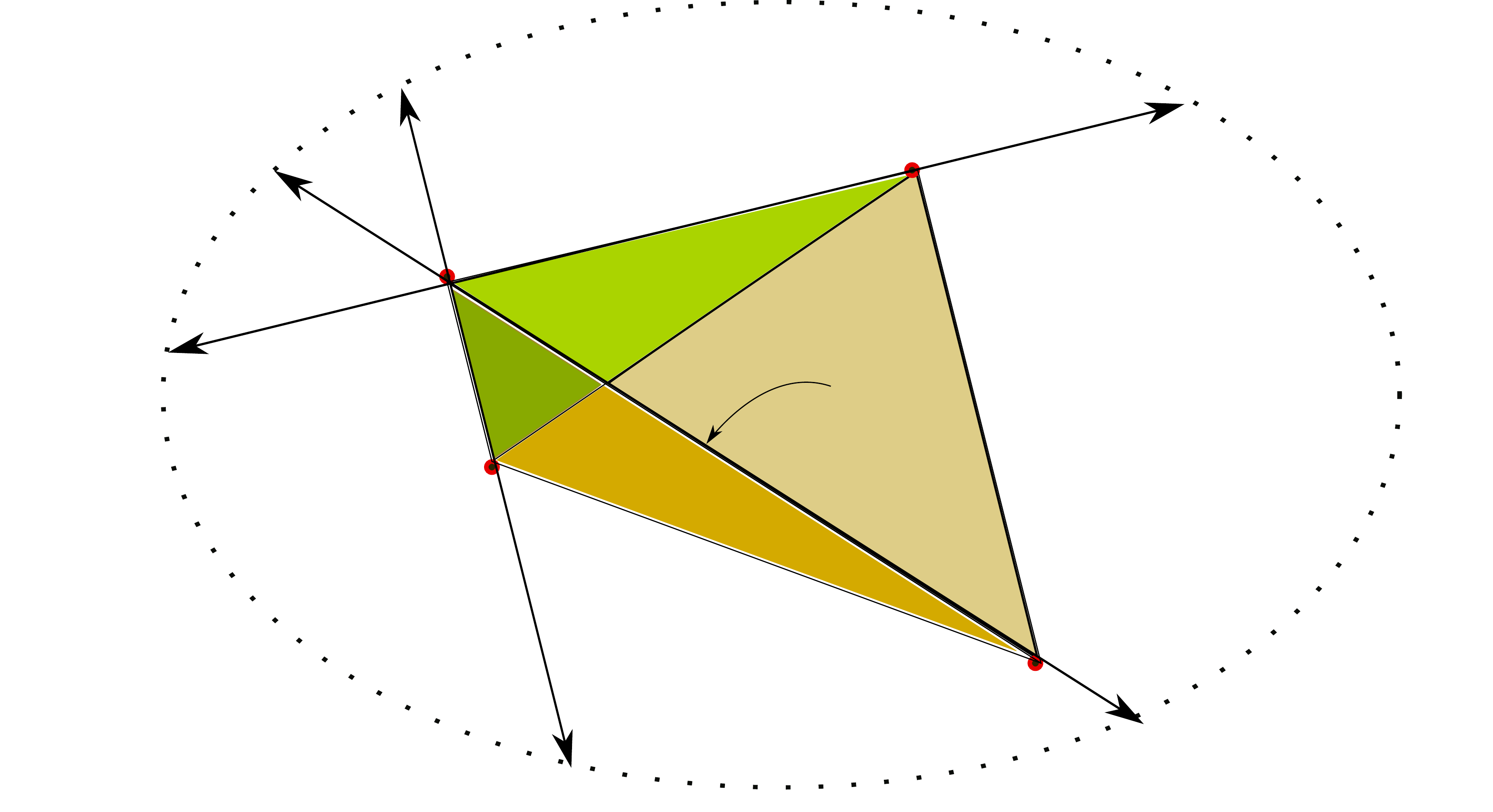 
\caption{An example of a $\rho(\Delta)$ fixed subspace $R$ within an apartment in the building of $\PGL_4(\Q_p)$.}
\label{fig:apt}
\end{figure}
\end{proof}

We will now aim at a contradiction, to the existence of $\mu_H$, by showing that $\mu_H$ almost every subgroup is totally reducible. As in Proposition \ref{prop:all_open}, the locally essential lemma \ref{lem:loc_ess} combined with Poincar\'{e} recurrence show that $\mu_H$ almost every subgroup is locally essential recurrent . 

It is a well known fact from ring theory that the a module is totally reducible if and only if it is generated by its irreducible components (see for example \cite[Theorem 1.4]{Weherfritz:linear_groups}). Thus the subspace $W = W(\Delta) < V$ generated by all the irreducible submodules of $\rho(\Delta)$ is a maximal $\rho(\Delta)$-totally reducible submodule. Assume toward contradiction that $W \ne V$ is not the whole space. We start by finding a finitely generated subgroup with the same maximal totally reducible subspace
\begin{lemma} \label{lem:fg_tot_red}
There is a finitely generated subgroup $\Sigma < \Delta$ such that $W$ is also a maximal totally reducible $\rho(\Sigma)$-module. 
\end{lemma}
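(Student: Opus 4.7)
The plan is to reduce the lemma to pure linear algebra by passing to the enveloping algebra. I would fix any set-theoretic lift $\widetilde\rho(\Delta)\subset\GL(V)$ of $\rho(\Delta)\subset\PGL(V)$ and consider the unital $K$-subalgebra $A\subset\mathrm{End}(V)$ generated by $\widetilde\rho(\Delta)$. Since $V$ is finite dimensional, so is $\mathrm{End}(V)$, hence $A$ will be finite dimensional as a $K$-vector space; in particular it will be generated as a $K$-algebra by finitely many of its generators, which we may take to be lifts $\widetilde\rho(\sigma_1),\ldots,\widetilde\rho(\sigma_m)$ of elements $\sigma_1,\ldots,\sigma_m\in\Delta$. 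Note that $A$ itself is independent of the choice of set-theoretic lift, because rescaling any generator by a nonzero scalar does not change the algebra it generates.

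The candidate subgroup will be $\Sigma=\langle\sigma_1,\ldots,\sigma_m\rangle<\Delta$; since $\Delta$ is locally finite, $\Sigma$ is automatically finite. By construction, the $K$-algebra generated by the lifts of $\rho(\Sigma)$ is still equal to $A$. It follows that a subspace $U\subset V$ is $\rho(\Sigma)$-invariant if and only if it is $A$-invariant if and only if it is $\rho(\Delta)$-invariant, and exactly the same equivalence holds for irreducibility. Therefore the socles (that is, the maximal totally reducible submodules) for $\rho(\Sigma)$ and for $\rho(\Delta)$ coincide, and both equal $W$, which is precisely the desired conclusion.

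I do not anticipate a real obstacle here: the only slightly delicate point is the translation between the projective and linear pictures, and it is handled by the observation above that the ambient enveloping algebra in $\mathrm{End}(V)$ is well defined regardless of the scalar choices involved in the lift. The positive-characteristic subtleties that were troublesome in the previous subsections, namely the possible failure of semisimplicity for representations of locally finite groups, are irrelevant for this lemma, since we are only comparing the $\Sigma$- and $\Delta$-submodule structures rather than actually proving semisimplicity of anything new.
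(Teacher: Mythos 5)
Your proof is correct, and it takes a genuinely different route from the paper's. The paper argues by exhaustion: it writes $W=\oplus_i W_i$ as a sum of $\rho(\Delta)$-irreducibles, takes finitely generated subgroups $\Sigma_1<\Sigma_2<\ldots$ ascending to $\Delta$, finds nested $\rho(\Sigma_j)$-irreducible subspaces $W_{i,j}\le W_{i,j+1}$ inside each $W_i$ which stabilize at $W_i$ by dimension count, and then — because the socle of the resulting $\Sigma_J$ may still be strictly larger than $W$ — adjoins finitely many further elements $\delta_1,\ldots,\delta_s\in\Delta$ to destroy the extra totally reducible summands. Your enveloping-algebra argument replaces this two-step stabilization-and-correction with a single application of finite-dimensionality of $\mathrm{End}(V)$, and it proves something strictly stronger: the $\Sigma$ you produce has \emph{exactly} the same lattice of invariant subspaces as $\Delta$ (not merely the same socle), which in particular yields the lemma. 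The only points worth making explicit in a write-up are (i) that the unital algebra generated by $\widetilde\rho(\sigma_1),\ldots,\widetilde\rho(\sigma_m)$ already contains the lifts of \emph{all} elements of $\Sigma=\langle\sigma_1,\ldots,\sigma_m\rangle$ up to scalar — which holds because $\widetilde\rho(\sigma_i)^{-1}$ is a polynomial in $\widetilde\rho(\sigma_i)$ by Cayley--Hamilton, the constant term of the characteristic polynomial being a nonzero multiple of the determinant — and (ii) the equivalence, which you use implicitly, that invariance of a subspace under a set of operators is the same as invariance under the unital algebra they generate. Both are routine. Your approach is also insensitive to characteristic and to local finiteness of $\Delta$, neither of which the lemma actually needs; the paper's proof likewise does not need them, so nothing is lost or gained on that front.
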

\begin{proof}
Let  $W = \oplus_i W_i$ be the decomposition of $W_i$ into irreducible representations. Let $\Sigma_1 < \Sigma_2 < \ldots$ be finitely generated subgroups ascending to $\Delta$. Within each $W_i$ we can find some $W_{i,j}< W_i$ which is irreducible as a $\rho(\Sigma_j)$-module. Furthermore we can clearly arrange that $W_{i,j} \le W_{i,j+1}, \ \forall i, j$. By dimension considerations this process must stabilize. And it must stabilize with $W_{i,j} = W_{i}, \ \forall j \ge J$ since a module that is invariant under every $\Sigma_j$ must be $\Delta$-invariant too. Thus the maximal totally reducible subspace of $\Sigma_J$ contains $W$. 

Now let 
$U = W \oplus U_1 \oplus \ldots \oplus U_l$ be the maximal totally reducible subspace for $\rho(\Sigma_J)$. For each $U_i$ we can find some $\delta_i \in \Delta$ such that $\rho(\delta) U_i \cap W \ne \langle 0 \rangle$. Thus none of the spaces $U_i$ can be contained in the maximal totally reducible subspace for $\Sigma:= \langle \Sigma_J, \delta_1,\delta_2, \ldots, \delta_s \rangle$ and the maximal totally reducible subspace for this last group must be $W$ itself. 
\end{proof}

$\rho(\Gamma)$ contains a very proximal element $g$. Since $\rho(\Gamma)$ is strongly irreducible we can assume, after possibly replacing $g$ by a conjugate, that $W \not < \overline{H}_g$ and $\overline{v}_g \not \in W$. This immediately implies that 
$$\lim_{n \arrow \infty} d(\rho(g^n) W , \overline{v}_g) = 0.$$
In particular there exists some $N \in \N$ such that for every $n > N$ we have $W \lneqq \langle W, \rho(g^n)W \rangle$. Now consider the measure preserving dynamics on $\Sub(\Gamma)$. Let $\Sigma < \Delta$ is the finitely generated subgroup provided by Lemma \ref{lem:fg_tot_red}. Since $\Delta$ was chosen to be locally essential we know that $\mu(\Env(\Sigma)) > 0$. Now by the recurrence assumption the set of return times $N(\Delta, \Env(\Sigma), g) = \{n \in \N \ | \ g^n \Delta g^{-n} \in \Env(\Sigma)\}$ is infinite. And for every such such return time $n$ we have 
$$\sigma \rho(g^n) W = \rho(g^n) \rho(g^{-n} \sigma g^n) W = \rho(g^n) W \qquad \forall \sigma \in \Sigma.$$ 
Moreover since conjugation by $g$ induces an isomorphism between $W$ and $\rho(g^n)W$ as $\Sigma$-modules the latter is again totally reducible. Thus if we choose $n > N$ which is also in $N(\Delta, \Env(\Sigma), g)$ we have a subspace $W' = \langle W , \rho(g)W \rangle$ that is $\Sigma$-invariant, totally reducible as a $\rho(\Sigma)$ module and strictly larger than $W$; in contradiction to our construction of $\Sigma$. This contradiction finishes the proof.

\subsection{A nearly Zariski dense essential cover} \label{sec:Z_dense}
Let  $H = \overline{\rho(\Gamma)}^{Z}(K)$ be the $K$ points of the Zariski closure of $\rho(\Gamma)$ and $H^{(0)}$ the connected component of the identity in $H$. Our goal in this section is to find an essential cover by finitely generated subgroups $\Sigma$ such that $\overline{\rho(\Sigma)}^{Z}$ contains $H^{(0)}$. For this fix $\mu \in \IRS^{\chiNF}$. As a first step we claim that $\overline{\rho(\Delta)}^{Z} > H^{(0)}$ almost surely (compare \cite[Theorem 2.6]{7_sam}). 

Let $\hc = \Lie(\H)(K)$ be the Lie algebra and $\Gr(\hc)$ the Grassmannian over $\hc$. By taking the Lie algebra of the random subgroup we obtain a random point in this Grassmannian
\begin{eqnarray*}
\Psi: \Sub(\Gamma) & \arrow & \Gr(\hc) \\
\Delta & \mapsto & \left[ \Lie \left(\overline{\rho(\Delta)}^{Z} \right) \right]
\end{eqnarray*}
The push forward of the IRS gives rise to a $\rho(\Gamma)$-invariant measure on $\Gr(\hc)$. Since $\rho(\Gamma)$ is Zariski dense and $\H^0$ is simple, it follows from Furstenberg's lemma \cite[Lemma 2]{Furst:Borel_dense} that such a measure can be supported only on the $\Gamma^{(0)}$-invariant points $\{\hc, \{0\}\}$. By the previous subsection we already know that $\rho(\Delta)$ is almost surely infinite so that in fact $\psi(\Delta) = \hc$ almost surely. This means $\dim(\overline{\rho(\Delta)}^{Z}) = \dim(H)$. Hence $\overline{\rho(\Delta)}^{Z}$ itself has to be a subgroup of maximal dimension and it contains $H^{(0)}$ - even in positive characteristic. 

By the results of Sections \ref{sec:essinf}, \ref{sec:countable_p} $\rho(\Delta)$ is not locally finite almost surely. Since $\rho(\Delta)$ has a simple Zariski closure this immediately implies that in fact it is not even amenable. Thus by Lemma \ref{lem:fg_subgroup}  $\mu$-almost every $\Delta \in \Sub(\Gamma)$ contains a finitely generated non-amenable subgroup $\Sigma < \Delta$ with $\left(\overline{\Sigma}^{Z} \right)^{(0)} > H^{(0)}$. As in the proof of Lemma \ref{lem:loc_ess} we can ignore these subgroups $\Sigma$ which are not $\mu$-essential and obtain the desired essential cover. 

\subsection{Abundance of very proximal elements} \label{sec:essprox}
The goal of this section is to refine the essential cover we already have, to obtain a cover by groups which satisfy some desirable dynamical properties in their action on $\PP(K^n)$. We focus on two properties: strong irreducibility of $\rho(\Sigma)$ on $\PP(K^n)$ and the existence of an element $a_{\Sigma} \in \Sigma$ with $\rho(a_{\Sigma})$ very proximal. Where $\Sigma$ ranges over the essential group participating in the cover. The first property is automatic from what we have said so far as strong irreducibility of $\rho: \Sigma \arrow \PGL_n(K)$ is equivalent to the irreducibility of the Zariski connected component $\left(\overline{\rho(\Sigma)}^Z \right)^{(0)}$. We will refine the essential cover so that each of the covering subgroups will contain a very proximal element. 

So far we have a cover by all finitely generated essential subgroups $\Sigma$ that satisfy the following conditions (i) non-amenable and (ii) satisfy $\overline{\Sigma}^Z > H^{(0)}$ (iii) $\rho(\Sigma)$ is strongly irreducible. Since both these conditions are monotone Lemma \ref{lem:refine} implies that it is enough to find, for every such essential subgroup $\Sigma$, a larger essential subgroup $\Sigma < \Theta$ containing such a very proximal element. 

By Lemma \ref{lem:lip->very-proximal}, in order to establish the existence of a very proximal element, it is enough to find an element with a small enough Lipschitz constant on an arbitrarily small open set. Let $(\epsilon',r',c') = \left( \epsilon (\Sigma),r(\Sigma),c(\Sigma) \right)$ be the constants, governing the connection between the proximality and Lipschitz constants, provided by that Lemma. Recall also that we have fixed an $(r, \epsilon)$-very proximal element $g \in \Pi$ together with our linear representation. Since $\Sigma$ is strongly irreducible, there exists an element $x \in \Sigma$ such that $x$ moves the attracting point $\overline{v}_{g^{-1}}$ of $g^{-1}$ outside the repelling hyperplane $\overline{H}_g$ of $g$. Set $d=d(x \overline{v}_{g^{-1}},\overline{H}_g)$ to be the projective distance between the two. Now consider the element
$$
 y= g^{m} x g^{-m},
$$
We claim that if $m > m_0$ for some $m_0$ then $y$ is $\epsilon'$-Lipschitz on a small neighborhood of $\overline{v}_{g^{-1}}$. It follows from Lemma \ref{fix} that for a large enough $m$ the element $\rho (g^{-m})$ is $\eta$-very proximal with the same attracting point and repelling hyperplane as $\rho (g^{-1})$; where $\eta >0$ can be chosen arbitrarily small. By Lemma \ref{lem:contracting=Lipschitz} $\rho (g^{-m})$ is $C_1 \eta^2$-Lipschitz on some small open neighborhood $O$, away from $\overline{H}_{g^{-1}}$, for some constant $C_1$ depending only on $\rho(g)$. If we take $O$ to be small enough and $m$ large enough then $d \left( \rho (x g^{-m})(O),\overline{H}_g \right) > d/2$. 
By Lemma \ref{lem:contracting=Lipschitz} again, the element $\rho (g^{m_j})$ is
$C_2\epsilon^2$-Lipschitz on $\rho (x g^{-m})(O)$. Now since $\rho (x)$ is bi-Lipschitz with some constant $C_3$ depending on $\rho (x)$, it follows that $\rho (y) = \rho (g^{m} x g^{-m})$ is $C_4 \epsilon^4$-Lipschitz on $O$, where $C_4$ depends only on $\rho (g)$
and $\rho (x)$. If we require also $\epsilon' \leq C_4^{-1/3}$ then $\rho (y)$ is $\epsilon'$-Lipschitz on $O$ as we claimed. The dynamics of the action of the element $\rho(g^m x g^{-m})$ on $\PP(K^n)$ is depicted on the left side of Figure \ref{fig:contracting}. 

\begin{figure}[ht] 
\centering \def\svgwidth{300pt}
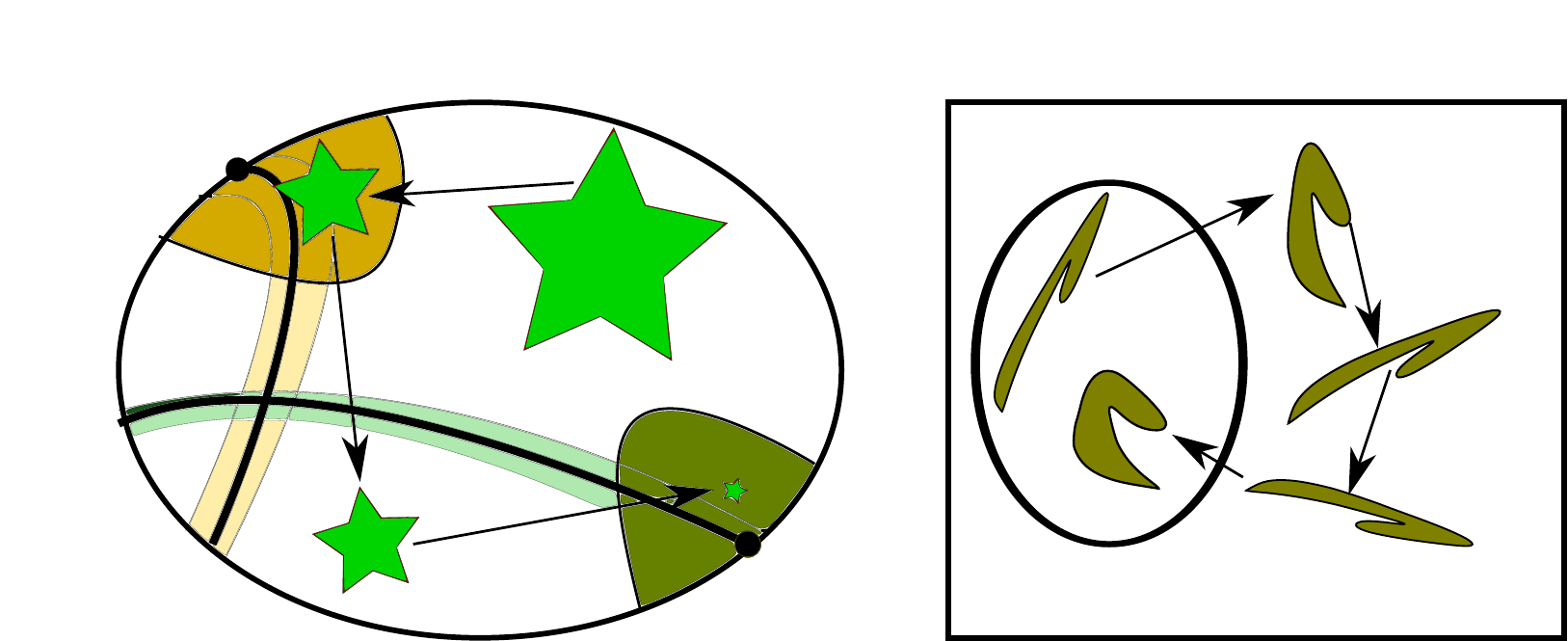 
\caption{The dynamics of $\rho(g^m x g^{-m})$ on the $\PP(K^n)$ and $\Sub(\Gamma)$.}
\label{fig:contracting}
\end{figure}

Now we analyze the action of the same element $g^mxg^{-m}$ on $\Sub(\Gamma)$. Poincar\'{e} recurrence yields infinitely many values of  $m$ for which 
$$\mu \left( \Env(\Sigma) \cap \Env (g^m \Sigma g^{-m}) \right) = \mu \left( \Env( \langle \Sigma, g^m \Sigma g^{-m} \rangle) \right) > 0.$$ We choose $m \in \N$ for which this is true and at the same time the reasoning of the previous paragraph applies so that $y = g^m x g^{-m}$ is $\epsilon'$-Lipschitz contained in the essential subgroup $\Theta := \langle \Sigma,  g^{m} \Sigma g^{-m} \rangle$. Since by assumption $\rho \left(\Sigma \right)$ is strongly irreducible Lemma \ref{lem:lip->very-proximal} provides elements $f_1,f_2 \in \Sigma$ such that $a_{\Theta} := y f_1 y^{-1} f_2 \in \Theta$ is $(r',c' \sqrt{\epsilon'})$-very proximal. Which completes the proof. Let $\Ac(a_{\Theta}^{\pm}), \Rc(a_{\Theta}^{\pm})$ be attracting and repelling neighborhoods for the elements we have just constructed.
%
%
By our construction all the groups $\Theta$ are again finitely generated, as they are generated by two finitely generated subgroups. Hence if 
$$\Fc := \left\{  \Theta \in \Eg \ \left | 
\begin{array}{l} 
\rho(\Theta)  {\text{ is strongly irreducible}} \\
\exists a \in \Theta {\text{ with }} \rho(a) {\text{ very proximal and }} \rho(a) \in \left(\overline{\rho(\Theta)}^Z \right)^{(0)} \\
\Theta {\text{ is finitely generated}}
\end{array} \right. \right \}$$
We have just shown that $\Fc$ covers $\IRS^{\chiNF}(\Gamma)$. 

\subsection{Putting the very proximal elements $\{a_{\Sigma}\}$ in general position}
In the construction of the very proximal elements $\{a_{\Sigma} \ | \ \Sigma \in \Fg\}$ in Subsection \ref{sec:essprox} no connection was established between these different elements. As a first step towards arranging them we would like, each one of them separately,  to form a ping-pong pair with our fixed very proximal element $g \in \Gamma$. Moreover we would like this to be realized with repelling and attracting neighborhoods:
$$\Rc(g^{\pm 1}), \Ac(g^{\pm 1}), \Rc(a_{\Sigma}^{\pm 1}), \Ac(a_{\Sigma}^{\pm 1}),$$
such that $\Rc(g^{\pm 1}), \Ac(g^{\pm 1})$ are independent of $\Sigma$. This will be done, at the expense of refining the cover again $\Fgg <_{\IRS^{\chiNF}(\Gamma)} \Fc$. 

Let $h \in \Pi$ be any very proximal element satisfying the ping-pong lemma conditions with $g$. Assume that $\Rc(g^{\pm 1}), \Ac(g^{\pm 1}), \Rc(h^{\pm 1}), \Ac(h^{\pm 1})$ are the corresponding attracting and repelling neighborhoods. For the sake of this proof only we will call an element $h'$ {\it{good}} if it is very proximal with attracting and repelling neighborhoods contained in these of $h$ as follows: $\Ac(h'^{\pm 1}) \subset \Ac(h),\Rc(h'^{\pm 1}) \subset \Rc(h^{-1})$. Such a {\it{good}} element is indeed good for us: the pair $\{h',g \}$ satisfies the conditions of the ping-pong lemma, with the given attracting and repelling neighborhood. 

Let $\Fgg \subset \Fc$ be the collection of essential subgroups in $\Fc$ that contain a good element. Since $\Fc$ is monotone Lemma \ref{lem:refine} implies that it is enough to show that any given $\Sigma \in \Fc$ is contained in a larger subgroup $\Theta \in \Fgg$. So fix $\Sigma \in \Fc$ and let $a = a_{\Sigma}$ be the associated very proximal element with attracting and repelling neighborhoods $\Ac(a_{\Sigma}^{\pm 1}), \Rc(a_{\Sigma}^{\pm 1})$. We can always assume that $a,h$ are in general position with respect to each other, that is to say that some high enough powers of them play ping-pong. Indeed conjugation just moves the attracting points and repelling hyperplanes around $\overline{v}_{\sigma a^{\pm} \sigma^{-1}} = \rho(\sigma) \overline{v}_{a^{\pm}}, \overline{H}_{\sigma a^{\pm} \sigma^{-1}} = \rho(\sigma) \overline{H}_{a^{\pm}}$, and since by assumption $\Sigma$  is strongly irreducible we can choose $\sigma \in \Sigma$ that will bring $a$ to a general position with respect to $h$. 

Now as $a,h$ are in general position we claim that $h^l a h^{-l}$ is good for any sufficiently large $l$. This is statement is clear upon observation that
\begin{eqnarray*}
\Ac(h^l a^{\pm} h^{-l})  & = & \rho(h^l) \Ac(a^{\pm}) \\
\Rc(h^l a^{\pm} h^{-l})  & = & \rho(h^l) \Rc(a^{\pm})
\end{eqnarray*}
Consider for example the second equation. Since $a$ and $h$ are in general position we know that $d \left( \overline{v}_{h^{-1}},  \Rc(a) \cup \Rc(a^{-1}) \right) > 0$. Since $h$ satisfies the conditions of Lemma \ref{fix} we can choose $l$ large enough so that $\Ac(h^{-l}) \subset (\overline{v}_{h^{-1}})_{d/2}$ and hence $\Ac(h^{-l}) \cap \left(\Rc(a) \cup \Rc(a^{-1}) \right)  = \emptyset$. Now the equation above implies that 
$$\Rc(h^l a^{\pm} h^{-l})  = \rho(h^l) \Rc(a^{\pm}) \subset \rho(h^l) \left(\PP(k^n) \setminus \Ac(h^{-l}) \right) \subset \Rc(h^l) \subset \Rc(h).$$
Where the inclusion before last follows from the fact that $\rho(h^{-l}) \left(\PP(k^n) \setminus \Rc(h^l) \right)  \subset \Ac(h^{-l})$. The second case follow in a similar fashion. 

On the space $\Sub(\Gamma)$ we repeat the argument from the previous section. By Poincar\'{e} recurrence the group $\Theta:=\langle \Sigma, h^{l} \Sigma h^{-l} \rangle$ is again essential for infinitely many values of $l$. We choose one such value to make $\Theta$ essential and at the same time large enough so that  $a_{\Theta} := h^l a h^{-l}$ is good. This completes the proof that $\Fgg$ covers $\IRS^{\chiNF}(\Gamma)$, where $\Fgg$ is the set
$$\left\{  \Theta \in \Fg \ \left | 
\begin{array}{l} 
\rho(\Theta)  {\text{ is strongly irreducible}} \\
\exists a \in \Theta {\text{ with }} \rho(a) {\text{ good, very proximal and }} \rho(a) \in \left(\overline{\rho(\Theta)}^Z \right)^{(0)} \\
\Theta {\text{ is finitely generated}}
\end{array} \right. \right \}.$$
The following proposition gives a good idea of the possible use we can make of such a cover in the construction of free groups: 
\begin{proposition} \label{prop:independent}
Assume that $g$ and $\{a_{\Theta} \ | \ \Theta \in \Fgg \}$ are very proximal elements as constructed above. Namely they admit attracting and repelling neighborhoods $\Ac(g^{\pm})$, $\Rc(g^{\pm})$, $\Ac(a_{\Theta}^{\pm})$, $\Rc(a_{\Theta}^{\pm})$ such that for every $\Theta \in \Fgg$ the elements $\{a_{\Theta}, g\}$ satisfy the conditions of the ping-pong lemma \ref{lem:ping-pong} with respect to the given neighborhoods. 

Then for every map $\psi: \Z \arrow \Fgg$ the elements 
$$\left \{g^{i} a_{\psi(i)} g^{-i} \ | \ i \in \N \right \},$$
satisfy the conditions of the ping-pong lemma and are therefore independent. 
\end{proposition}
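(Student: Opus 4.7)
The plan is to exhibit, for each $i \in \N$, attracting and repelling neighborhoods for $h_i := g^i a_{\psi(i)} g^{-i}$ and verify that this collection satisfies the hypotheses of the ping-pong Lemma \ref{lem:ping-pong}. The obvious candidates are the $\rho(g^i)$-translates
\[
\Ac(h_i^{\pm 1}) := \rho(g^i)\,\Ac\!\left(a_{\psi(i)}^{\pm 1}\right), \qquad \Rc(h_i^{\pm 1}) := \rho(g^i)\,\Rc\!\left(a_{\psi(i)}^{\pm 1}\right).
\]
Since conjugation by $\rho(g^i)$ is a bijection of $\PP(K^n)$, the pair $\bigl(\Ac(h_i^{\pm 1}), \Rc(h_i^{\pm 1})\bigr)$ realizes $h_i$ as a very proximal transformation, with the internal compatibility $\Ac(h_i)\cap(\Rc(h_i)\cup \Ac(h_i^{-1}))=\emptyset$ (and similarly for $h_i^{-1}$) inherited from the corresponding property of $a_{\psi(i)}$.

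The nontrivial check is the mutual disjointness required by Lemma \ref{lem:ping-pong}: for $i\neq j$, the attracting set $\Ac(h_i^{\pm 1})$ should be disjoint from both $\Ac(h_j^{\pm 1})$ and $\Rc(h_j^{\pm 1})$. By symmetry we may assume $i<j$, and after applying $\rho(g^{-i})$ the condition becomes
\[
\Ac\!\left(a_{\psi(i)}^{\pm 1}\right) \cap \rho(g^m)\!\left(\Ac\!\left(a_{\psi(j)}^{\pm 1}\right)\cup \Rc\!\left(a_{\psi(j)}^{\pm 1}\right)\right) = \emptyset,
\]
where $m=j-i\geq 1$. This is where I would invoke the two ping-pong hypotheses between $g$ and the individual $a_\Theta$'s.

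The dynamical input is the following observation about iterating $g$. Applying the ping-pong hypothesis for the pair $\{a_{\psi(j)},g\}$, the four sets $\Ac(a_{\psi(j)}^{\pm 1}),\ \Rc(a_{\psi(j)}^{\pm 1})$ are all contained in $\PP(K^n)\setminus \Rc(g)$. Since $g$ is very proximal with the given neighborhoods, $\rho(g)\bigl(\PP(K^n)\setminus \Rc(g)\bigr)\subseteq \Ac(g)$, and because $\Ac(g)\cap \Rc(g)=\emptyset$ we get $\rho(g)\Ac(g)\subseteq \Ac(g)$; iterating,
\[
\rho(g^m)\!\left(\Ac\!\left(a_{\psi(j)}^{\pm 1}\right)\cup \Rc\!\left(a_{\psi(j)}^{\pm 1}\right)\right)\subseteq \Ac(g), \qquad \forall\, m\geq 1.
\]
On the other hand, the ping-pong hypothesis for the pair $\{a_{\psi(i)},g\}$ gives $\Ac(a_{\psi(i)}^{\pm 1})\cap \Ac(g)=\emptyset$, which combines with the previous inclusion to yield the required disjointness. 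Applying Lemma \ref{lem:ping-pong} to the resulting ping-pong tuple then produces the asserted free group.

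The only mildly delicate point is making sure that the ping-pong data chosen in Subsection \ref{sec:essprox} really was arranged so that $\Ac(a_\Theta^{\pm 1})\cup \Rc(a_\Theta^{\pm 1})$ lies outside $\Rc(g^{\pm 1})$ (not merely that the attractors are separated); but this is exactly the symmetric disjointness built into the standing ping-pong hypothesis between $g$ and each $a_\Theta$, so no extra work is needed. Thus the main obstacle is purely bookkeeping: organizing the four-way disjointness relations so that a single application of the contracting dynamics of $g$ transports everything into $\Ac(g)$, away from the untranslated attractor $\Ac(a_{\psi(i)}^{\pm 1})$.
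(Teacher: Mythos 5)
Your overall strategy coincides with the paper's: transport the attracting/repelling data of $a_{\psi(i)}$ by $\rho(g^i)$ and use the contracting dynamics of $g$ to separate the translates. But there is a genuine gap in the key dynamical step. You push the set $\Ac(a_{\psi(j)}^{\pm 1})\cup\Rc(a_{\psi(j)}^{\pm 1})$ forward by $\rho(g^m)$, $m\ge 1$, and to land in $\Ac(g)$ after the first application of $\rho(g)$ you need this set to be disjoint from $\Rc(g)$. For the attracting sets $\Ac(a_{\psi(j)}^{\pm 1})$ this does follow from the ping-pong hypothesis; for the repelling sets $\Rc(a_{\psi(j)}^{\pm 1})$ it does not. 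The conditions of Lemma \ref{lem:ping-pong} only constrain attracting sets against everything else — they never force two \emph{repelling} sets to be disjoint — so $\Rc(a_{\psi(j)}^{\pm 1})\cap\Rc(g)$ may well be nonempty under the stated hypotheses. You flag exactly this point at the end of your proposal but then dismiss it by asserting that the disjointness is "built into the standing ping-pong hypothesis"; it is not, and nothing in the construction of Subsection \ref{sec:essprox} supplies it either.

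The paper's proof circumvents this by an asymmetric bookkeeping device: it introduces $\hat{D}=\PP(k^n)\setminus(\Ac(g)\cup\Ac(g^{-1}))$ and $D=\hat{D}\setminus(\Rc(g)\cup\Rc(g^{-1}))$, notes that ping-pong gives $\Ac(a_{\Theta}^{\pm})\cup\Rc(a_{\Theta}^{\pm})\subset\hat{D}$ but only $\Ac(a_{\Theta}^{\pm})\subset D$, and proves $\rho(g^i)(\hat{D})\cap\rho(g^j)(D)=\emptyset$ for $i\ne j$. In other words, it is the \emph{attracting} sets (which genuinely avoid $\Rc(g^{\pm 1})$) that get transported into $\Ac(g^{\pm 1})$ by a nonzero power of $g$, while the repelling sets are only ever required to avoid $\Ac(g^{\pm 1})$ — which they do by ping-pong. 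Your argument can be repaired along the same lines: instead of applying $\rho(g^{-i})$ and translating the $j$-indexed attracting-union-repelling sets, apply $\rho(g^{-j})$ so that only $\Ac(a_{\psi(i)}^{\pm 1})$ gets translated (by a negative power, landing in $\Ac(g^{-1})$, which is disjoint from $\Ac(a_{\psi(j)}^{\pm 1})\cup\Rc(a_{\psi(j)}^{\pm 1})$ by ping-pong). Note also that the required disjointness is not symmetric in $i$ and $j$, so both ordered pairs must be checked; the paper's formulation handles both at once since $\Ac(g^{\pm 1})$ is disjoint from all eight of the untranslated sets.
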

\begin{proof}
Consider the closed sets
$$\hat{D}:=\PP(k^n) \setminus \left(\Ac(g) \cup \Ac(g^{-1}) \right), \qquad D :=\PP(k^n) \setminus \left(\Ac(g) \cup \Ac(g^{-1}) \cup \Rc(g) \cup \Rc(g^{-1})\right),$$
These satisfy the following properties:
\begin{enumerate}
\item \label{itm:all_in_hat}
$\Ac(a_{\Theta}^{\pm}) \cup \Rc(a_{\Theta}^{\pm}) \subset \hat{D} \ \ \forall \Theta \in \Fgg$.
\item \label{itm:some_in_D} $\Ac(a_{\Theta}^{\pm}) \subset D \ \ \forall \Theta \in \Fgg$.
\item \label{itm:disjoint_translates} $\rho(g^i)(\hat{D}) \cap \rho(g^j)(D) = \emptyset, \ \ \forall i \ne j \in \Z$ 
\end{enumerate}
Indeed the first two properties follow directly from our ping-pong assumption on the pairs $\{g,a_{\Theta}\}$. As for the third property, after applying an appropriate power of $\rho(g)$ and possibly changing $i$ and $j$, we may assume without loss of generality that $i =0$ and that $j >0$; but $\rho(g^j)(D) \subset \Ac(g)$ and $\hat{D} \cap \Ac(g) = \emptyset$.  

Now these three properties together imply the statement. Indeed given $i \ne j \in \Z$ we have to show that 
$$\Ac(g^{i} a_{\psi(i)}^{\pm} g^{-i}) \cap \left[\Ac(g^{j} a_{\psi(j)}^{\pm} g^{-j}) \cup  \Rc(g^{j} a_{\psi(j)}^{\pm} g^{-j}) \right] = \emptyset$$
But $\Ac(g^{i} a_{\psi(i)}^{\pm} g^{-i}) = \rho(g^{i}) \left(\Ac(a_{\psi(i)}^{\pm}) \right) \subset \rho(g^{i})(D)$ while for a similar reason we have $\Ac(g^{j} a_{\psi(j)}^{\pm} g^{-j}) \cup  \Rc(g^{j} a_{\psi(j)}^{\pm} g^{-j}) =   \rho(g^{j}) \left(\Ac(a_{\psi(j)}^{\pm}) \cup \Rc(a_{\psi(j)}^{\pm}) \right) \subset \rho(g^{j})(\hat{D})$ which concludes the proof by Property (\ref{itm:disjoint_translates}) above. A case that is particularly easy to visualize is the rank one situation where the very proximal elements exhibit a north-south type dynamics on the projective plane. The situation in this specific case is depicted in figure \ref{fig:arrange} below. 
\end{proof}

\begin{figure}[ht] 
\centering \def\svgwidth{200pt}
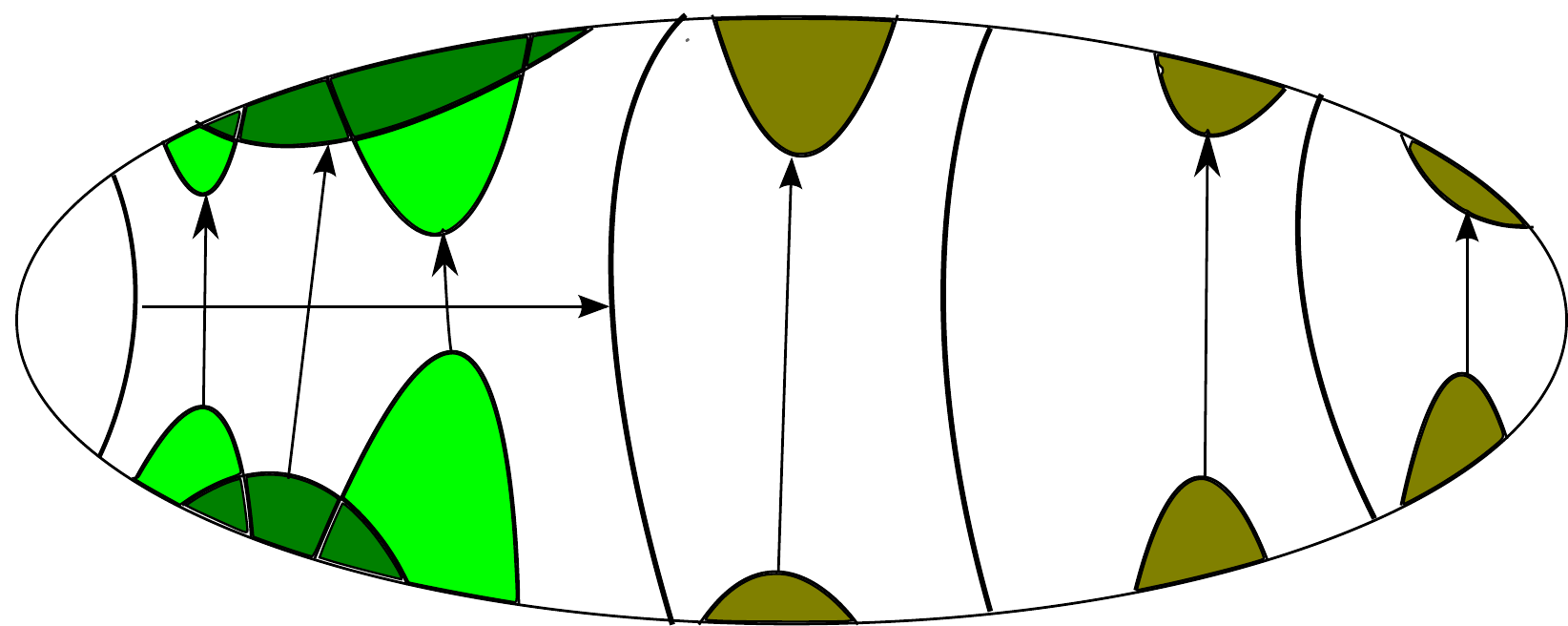 
\caption{Arranging very-proximal elements into a ping-pong setting}
\label{fig:arrange}
\end{figure}

\subsection{Poincar\'{e} recurrence estimates for multiple measures} \label{sec:Poincare}
Before proceeding to the proof of Theorem \ref{thm:main_tec}. We will need the following quantitative version of Poincar\'{e} recurrence.  
\begin{lemma} \label{lem:poincare} (Poincar\'{e} recurrence)
Let $(X,\Bc,\mu,T)$ be a probability measure preserving action of $\Z = \langle T \rangle$, $A \in \Bc$ a set of positive measure and $\epsilon >0$. Then there exists a number $n = n(A,\mu ,\epsilon) \in \N$, such that for every $N \in \N$ we have 
$$
\mu \left( A \setminus \left( \bigcup_{i=N}^{N+n-1}T^{i}A \right ) \right) < \epsilon.
$$
\end{lemma}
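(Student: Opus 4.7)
My plan is to use $T$-invariance to reduce the assertion to an inequality with no $N$-dependence, and then invoke ordinary Poincar\'e recurrence (in the backward direction) to bound the error uniformly.

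First I will apply $T^{-N}$-invariance of $\mu$ to rewrite
\[
\mu\Bigl(A \setminus \bigcup_{i=N}^{N+n-1} T^{i}A\Bigr) = \mu\Bigl(T^{-N}A \setminus \bigcup_{j=0}^{n-1} T^{j}A\Bigr).
\]
Setting $U_n := \bigcup_{j=0}^{n-1} T^{j}A$ and $U_\infty := \bigcup_{j=0}^{\infty} T^{j}A$, the problem thus reduces to bounding $\mu(T^{-N}A \setminus U_n)$ uniformly in $N$.

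The key observation will be that $T^{-N}A \subset U_\infty$ modulo a $\mu$-nullset, for every $N \in \N$. Indeed, if $y \in T^{-N}A$ then $T^{N} y \in A$, and the standard Poincar\'e recurrence theorem applied to $T^{-1}$ produces $\mu$-almost surely some $j > N$ with $T^{N - j} y = T^{-j}(T^{N} y) \in A$; this exhibits $y$ as a point of $T^{j - N} A \subset U_\infty$. Consequently
\[
\mu\bigl(T^{-N}A \setminus U_n\bigr) \le \mu\bigl(U_\infty \setminus U_n\bigr),
\]
where the right hand side is independent of $N$ and tends to $0$ by continuity of measure from below (using $U_n \uparrow U_\infty$ in the finite measure space $(X,\Bc,\mu)$). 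Choosing $n = n(A,\mu,\epsilon)$ so that $\mu(U_\infty \setminus U_n) < \epsilon$ will then finish the proof.

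The one step where care is needed is the Poincar\'e-recurrence inclusion $T^{-N}A \subset U_\infty$ above: the quantity in the statement is phrased in terms of forward iterates $T^{i}A$, while the uniformity in $N$ naturally comes from \emph{backward} recurrence of orbits that land in $A$ at time $N$. Keeping straight which iterate one is applying recurrence to is the only nontrivial bookkeeping; after that the argument reduces to monotone continuity of a finite measure.
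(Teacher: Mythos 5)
Your argument is correct. The reduction $\mu(A \setminus \bigcup_{i=N}^{N+n-1}T^iA) = \mu(T^{-N}A \setminus U_n)$ is a valid application of $T$-invariance, the almost-everywhere inclusion $T^{-N}A \subset U_\infty$ follows as you say from backward Poincar\'e recurrence (the exceptional set is $T^{-N}$ of the null set of non-recurrent points of $A$, hence null), and monotone continuity of the finite measure $\mu$ then gives an $N$-independent bound $\mu(U_\infty \setminus U_n) < \epsilon$ for $n$ large.

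The route is genuinely different from the paper's. The paper decomposes the forward orbit $\bigcup_n T^nA$ into a Kakutani--Rokhlin tower over $A$ built from the first-return-time sets $V_m$, and chooses $n$ so that the tail of the tower (the union of the columns of height exceeding $n$) has measure less than $\epsilon$; the uniformity in $N$ comes from translating the bad set into that tail. You instead avoid any tower construction: you shift the whole inequality back to time $0$, observe that every shifted copy $T^{-N}A$ sits inside the single set $U_\infty$ up to a null set, and let continuity from below do the work. Your version is shorter and needs only the qualitative recurrence theorem plus $\sigma$-additivity, at the cost of invoking recurrence for $T^{-1}$ rather than $T$; the paper's tower picture is heavier but makes the structure of the return times explicit, which is the standard template for such quantitative recurrence statements. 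Both yield the same conclusion, so there is nothing to fix.
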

\begin{proof}
Let $V_m = \left \{x \in A \ | \ T^m x \in A, {\text{ but }} T^{l}x \not \in A  \ \ \forall 1 \le l < m \right \}$. Clearly these sets are measurable and disjoint. Poincar\'{e} recurrence implies that their union covers all of $A$ up to $\mu$-nullsets. This enables us to cover the forward $\langle T \rangle$-orbit of $\Env(\Sigma)$ by a so called  Kakutani-Rokhlin tower (see figure \ref{fig:KM}):
\begin{eqnarray*}
\cup_{n \in \N} T^{n}A & = & \left[ V_1 \right] \\ 
& \sqcup & \left[V_2 \sqcup T^{1}V_2  \right] \\
& \sqcup & \left[V_3 \sqcup T^{1}V_3 \sqcup T^{2}V_3  \right] \sqcup \ldots 
\end{eqnarray*} 
Where all the sets are disjoint and equality is up to nullsets. Denoting the tail of this tower by 
$$
\Tail(m) =   \left[V_{m+1} \sqcup \ldots \sqcup T^{m}V_{m+1} \right] \sqcup  \left[V_{m+2} \sqcup \ldots \sqcup T^{m+1} V_{m+2} \right] \sqcup \ldots,
$$
we choose $n = n(T,\mu,\epsilon)$ to be the minimal number such that $\mu(\Tail(n))< \epsilon$. The lemma now follows from the obvious inclusion
$$A \setminus \left( \bigcup_{i=N}^{N+n-1}T^{i}A \right )  \subset T^{-N}(\Tail(n)).$$  
\begin{figure}[t] 
\centering \def\svgwidth{100pt}
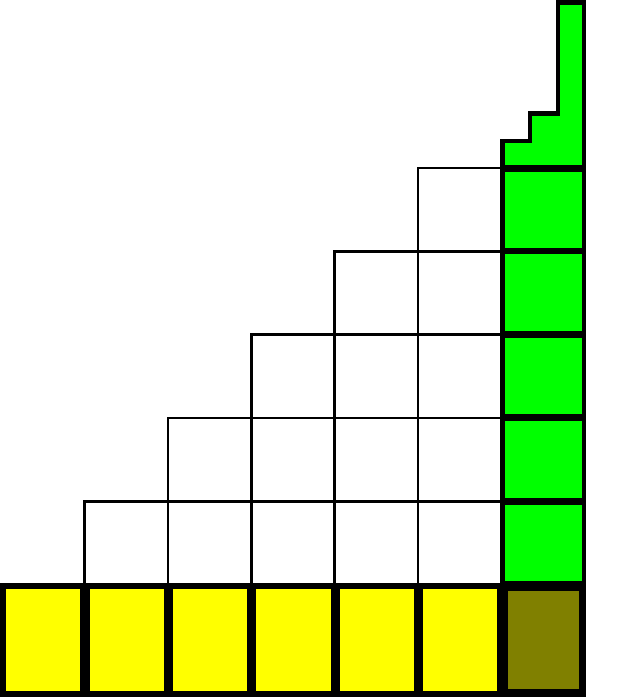 
\caption{A Kakutani-Rokhlin tower} 
\end{figure} \label{fig:KM}
\end{proof}

\subsection{An independent set} 
We proceed to our last refinement: a collection $\Hg \subset \Fgg$ of finitely generated essential subgroups together together with a set of associated very proximal elements $B := \{b_{\Theta} \in \Theta  \ | \ \Theta \in \Hg \}$ which satisfy the following two conditions:
\begin{description}
\item[Ind] \label{itm:Ind} $B = \{b_{\Theta} \ | \ \Theta \in \Hg \}$ forms an independent set.  
\item[Cov] \label{itm:Cov} $\Hg$ covers $\IRS^{\chiNF}$. 
\end{description}
To achieve this, we can no longer appeal to Lemma \ref{lem:refine} as our new requirement {\bf{Ind}} can no longer be verified one group at a time. To obtain an independent set we apply Proposition \ref{prop:independent} with a function $\psi: \Z \arrow \Fgg$ satisfying the following property:
\begin{description}
\item[{\bf{Rpt}}] For every $(n, \Sigma) \in \N \times \Fgg$ there is some index $N = N(n,\Sigma)$ such that $\psi(i) = \psi(i+1) = \psi(i+2) = \ldots = \psi(i+n-1) = \Sigma$.
\end{description}
to obtain a sequence of subgroups $\Theta_j = \langle \Sigma, g^{j} \Sigma g^{-j} \rangle$ and an associated sequence of elements 
$$\left\{ b_{\Theta_j} = g^{j} a_{\psi(j)} g^{-j} \in  \Theta_j \qquad j \in \N \right\}.$$ Proposition \ref{prop:independent} guarantees that the elements $\{b_{\Theta_j} \ | \ j \in \N\}$ form a ping-pong tuple and are hence independent. It is not necessarily true that every $\Theta_j$ is essential but to construct $\Hg$ and $B$ we take only the indexes $j$ for which this is the case:\begin{eqnarray*}
\Hg & = & \{ \Theta_j \ | \ j \in \Z, \Theta_j {\text{ is }} \mu-{\text{essential for some }} \mu \in \IRS^{\chiNF} \} \\
B & = & \{b_{\Theta} \ | \ \Theta \in \Hg \}.
\end{eqnarray*}
Clearly $\Hg \subset \Fgg$. Indeed each $\Theta \in \Hg$ is finitely generated as it is generated by two finitely generated subgroups; it is strongly irreducible as it contains a strongly irreducible subgroup and it contains the very proximal element $b_{\Theta}$. The independence condition ({\bf{Ind}}) for the elements of $B$ follows directly from Proposition \ref{prop:independent}. 

As for the covering condition ({\bf{Cov}}), this is where our uniform estimates on Poincare\'{e} recurrence come into play. Fix $\Sigma \in \Fgg$, $\epsilon > 0$ and let $\mu \in \IRS^{\chiNF}(\Gamma)$ be a measure according to which $\Sigma$ is $\mu$-essential. Usually there will be uncountably many such measures since $\Hg$ is merely countable while $\IRS^{\chiNF}(\Gamma)$ is not! Still we can apply Lemma \ref{lem:poincare} to the dynamical system $(X = \Sub(\Gamma), \Bc,\mu,g)$ and to the positive measure set $A = \Env(\Sigma)$. Let $n = n(A,\mu,\epsilon) = n(\Sigma,\mu,\epsilon)$ be the constant given by that lemma so that for every $N \in \Z$ we have
$$
\mu \left\{\Env(\Sigma) \setminus \bigcup_{j = N}^{N + n-1} \Env(\langle \Sigma, g^{j} \Sigma g^{-j} \rangle ) \right\}  < \epsilon.$$ 
By our construction of the function $\psi$ there exists an index $N = N(n,\Sigma)$ such that 
$\Theta_j = \langle \Sigma, g^{j} \Sigma g^{-j} \rangle \in \Hg$ for every $N(n,\Sigma) \le j \le N(n,\Sigma)+n-1$. Or at least for every such $j$ such that $\Theta_j$ is essential. Since anyhow these that are non essential have envelopes of measure zero we have
$$\mu \left\{\Env(\Sigma) \setminus \bigcup_{\Theta \in \Hg} \Env(\Theta) \right\}  \le \mu \left\{\Env(\Sigma) \setminus \bigcup_{j = N}^{N+n-1} \Env(\Theta_j) \right\} < \epsilon.$$
Since $\mu, \epsilon, \Sigma$ are arbitrary we deduce that $\Hg$ covers $\IRS^{\chiNF}$ as promised.  

\subsection{Conclusion}
Here we complete the proof of Theorem \ref{thm:main_tec} thereby completing the proof of the main theorem. Let $\Hg = \{\Theta_1, \Theta_2, \ldots \}, B = \{b_1,b_2, \ldots \}$ be the groups and very proximal elements constructed in the previous section - with a new enumeration making the indices consecutive. No further refinement of $\Hg$ the collection of essential subgroups will be needed; what we do want is a much richer free subgroup containing representatives of all double cosets of the form $\Theta_q \gamma \Theta_p$. For this fix a bijection 
\begin{eqnarray*}
\N & \arrow & \N^2 \times \Gamma \\
k & \mapsto & \left( p(k),q(k),\gamma(k) \right)
\end{eqnarray*}

We proceed to prove Theorem \ref{thm:main_tec} by induction on $k$.  Assume that after $k$ steps of induction we have constructed group elements $\{f_i \in \Gamma \ | \ 1 \le i \le k \}$ and a sequence of numbers $\{n_p(k) \in \N \ | \ p \in \N \}$ with the following properties: 
\begin{itemize}
\item $\rho(f_i)$ is very proximal for each $1 \le i \le k$. 
\item $n_p(k) \ge n_p(k-1), \ \forall p \in \N$. 
\item $f_i \in \Theta_{q(i)} \gamma(i) \Theta_{p(i)}, \ \forall 1 \le i \le k$. 
\item The elements 
$$B(k) := \{f_1, f_2 \ldots ,f_{k}, b_1^{n_1(k)},b_2^{n_2(k)} ,\ldots \},$$
form an (infinite) ping-pong tuple - satisfying the conditions of Lemma \ref{lem:ping-pong}. 
\end{itemize} 

For the basis of the induction we take $k=0, n_p(0)=1$ and $B(0) = B$. Now assume that we already have all of the above for $k-1$ and let us show how to carry out the induction. Set $p = p(k), q = q(k), \gamma = \gamma(k)$. Since $\Theta_{p}, \Theta_{q} \in \Hg \subset \Fgg \subset \Fg$ both these groups act strongly irreducibly on the projective space. So there exist elements $x= x(k) \in \Theta_{p}, y = y(k) \in \Theta_{q}$ such that:
\begin{eqnarray}
\label{eq1} \rho (y \gamma x) \overline{v}_{b_{p}}  
      	& \not \in & \overline{H}_{b_{q}} \\
\nonumber \rho (y \gamma x)^{-1} \overline{v}_{b_{q}^{-1}}  
      	& \not \in & \overline{H}_{b_{p}^{-1}} \\
\nonumber \rho(y \gamma x)^{-1} \overline{v}_{b_q} 
	& \not \in & \overline{H}_{b_p} \\
\nonumber \rho(y \gamma x) \overline{v}_{b_p^{-1}} 
	& \not \in & \overline{H}_{b_q^{-1}} 
\end{eqnarray}
Indeed the set of elements in $\left(\Theta_p \cap \G^{(0)} \right) \times \left(\Theta_q \cap \G^{(0)} \right)$ that satisfy each one of the above conditions is Zariski open and non-trivial by the strong irreducibility. Hence their intersection is non-empty. 

We define 
$$f_k := b_{q}^{l_2} y \gamma x b_{p}^{l_1}, $$ 
Denote by $(\cdot)_{\epsilon}$ an $\epsilon$-neighborhood in $\PP(k^n)$. We claim that for a small given $\epsilon > 0$ if $l_1,l_2$ are large enough then there exists some $0 < \epsilon' < \epsilon$ such that 
\begin{eqnarray} \label{eqn:inclusions}
\nonumber \rho(f_k) \left(\PP^{n-1}(k) \setminus \left[ \left(\overline{H}_{b_p}\right)_{\epsilon} \setminus \left(\overline{H}_{b_p}\right)_{\epsilon'} \right] \right)    
   & \subset &  \left( \overline{v}_{b_q} \right)_{\epsilon'} \setminus\left( \overline{v}_{b_q} \right)_{\epsilon'}\\
\rho(f_k^{-1}) \left(\PP^{n-1}(k) \setminus \left[ \left(\overline{H}_{b_q^{-1}}\right)_{\epsilon}   \setminus \left(\overline{H}_{b_q^{-1}}\right)_{\epsilon'}  \right] \right)
   & \subset &  \left( \overline{v}_{b_q^{-1}} \right)_{\epsilon} \setminus\left( \overline{v}_{b_q^{-1}} \right)_{\epsilon'}. 
\end{eqnarray}
 Indeed we can write
\begin{eqnarray*}
\rho(f_k) \left(\PP^{n-1}(k) \setminus \left(\overline{H}_{b_p}\right)_{\epsilon}  \right) & \subset &  \rho(b_{q}^{l_2} y \gamma x) \left( \left(\overline{v}_{b_p}  \right)_{\epsilon_1} \right) \\
    & \subset &   \rho(b_{q}^{l_2}) \left( \left( \rho( y \gamma x) \overline{v}_{b_p}  \right)_{\epsilon_2}  \right) \\
    & \subset & \rho(b_{q}^{l_2}) \left( \left(\PP^{n-1} \setminus \overline{H}_{b_q}  \right)_{\epsilon_2} \right) \\
    & \subset & \left( \overline{v}_{b_q} \right)_{\epsilon} \setminus\left( \overline{v}_{b_q} \right)_{\epsilon'} 
\end{eqnarray*}
Where, since $b_{p}$ is very proximal, the first inclusion holds for arbitrarily small values of $\epsilon_1$, whenever  $l_1$ is large enough. Consequently $\epsilon_2$ can be set arbitrarily small by continuity of $\rho(y \gamma x)$ on the projective plane. In view of the first line in Equation (\ref{eq1}) we can choose $\epsilon_2 < \frac{1}{2}d\left(\rho(y \gamma x)\overline{v}_{b_p}, \overline{H}_{b_q}\right)$ which accounts the third line. Now by choosing $l_2$ to be large enough, $b_q^{l_2}$ becomes very proximal with attracting and repelling neighborhoods arbitrarily close to $\overline{v}_{b_q}$ and $\overline{H}_{b_q}$ respectively. In particular we can arrange for the image to be contained in $(\overline{v}_{b_p})_{\epsilon}$ as indicated. The fact that we can exclude from the image a ball of some positive radius $\epsilon' > 0$ comes from the fact that $\overline{v}_{b_q}$ is a fixed point for the homeomorphism $\rho(b_q^{l_2})$ and that, by the above observations, we may assume that $\epsilon_2 < \frac{1}{2} d(\overline{H}_{b_q}, \overline{v}_{b_q})$. 

\begin{figure}[ht] 
\begin{center}
\includegraphics[width=8cm]{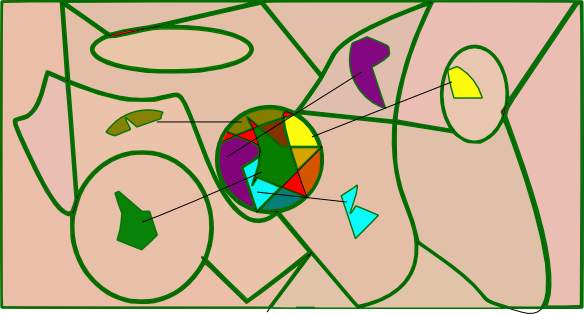}
\end{center}
\caption{Final refinement} 
\label{fig:final}
\end{figure}

And on the other hand 
\begin{eqnarray*}
\rho(f_k) \left( \left(\overline{H}_{b_p}\right)_{\epsilon'}  \right) & \subset &  
   \rho(b_{q}^{l_2} y \gamma x) \left( \left(\overline{H}_{b_p}\right)_{\epsilon_1} \right) \\
    & \subset &   \rho(b_{q}^{l_2}) \left( \left( \rho( y \gamma x) \overline{H}_{b_p}  \right)_{\epsilon_2}  \right) \\
    & \subset & \rho(b_{q}^{l_2}) \left(\PP^{n-1} \setminus \left( \overline{v}_{b_q}  \right)_{\epsilon_2} \right) \\
    & \subset & \left( \overline{v}_{b_q} \right)_{\epsilon} \setminus\left( \overline{v}_{b_q} \right)_{\epsilon'} 
\end{eqnarray*} 
Where the first equation can be satisfied, for arbitrarily small values of $\epsilon_1$, if we assume that $\epsilon'$ is small enough; by the fact that $\H_{b_p}$ invariant under the continuous map $b_p^{l_1}$ and that the latter map admits some global Lipschitz constant on the whole of $\PP(k^n)$.  The second inclusion follows, for arbitrarily small values of $\epsilon_2$, using the Lipschitz nature of $\rho(y \gamma x)$ combined with the ability to bound $\epsilon_1$. Now using the third line in Equation \ref{eq1} we can choose $\epsilon_2 < d(\overline{v}_{b_q}, \rho(y \gamma x) \overline{H}_{b_p})$. Finally the last line follows from the previous one and from the fact that $\overline{v}_{b_q}$ is $\rho(b_{q})$-invariant. 

These last two calculations combine to yield the first Equation in \ref{eqn:inclusions}. The second equation there follows in a completely symmetric fashion. 

Equations \ref{eqn:inclusions}, applied to a small enough value of $\epsilon$, imply the existence of $\epsilon' > 0$ and $l_1,l_2 \in \N$ such that $\rho(f_k)$ becomes a very proximal element with attracting and repelling neighborhoods as follows:
\begin{eqnarray*} \label{eq2}
\overline{\Rc(f_k)} & \subset & \Rc(b_p) \setminus \left(\overline{H}_{b_p}\right)_{\epsilon'}  \\
\overline{\Ac(f_k)} & \subset & \Ac(b_q) \setminus\left( \overline{v}_{b_q} \right)_{\epsilon'} \\
\overline{\Rc(f_k^{-1})} & \subset & \Rc(b_q^{-1}) \setminus \left(\overline{H}_{b_q^{-1}}\right)_{\epsilon'}  \\
\overline{\Ac(f_k^{-1})} & \subset & \Ac(b_p^{-1}) \setminus\left( \overline{v}_{b_p^{-1}} \right)_{\epsilon'}. 
\end{eqnarray*}
Since the elements $b_p, b_q$ are very proximal, and in addition satisfy the conditions of Lemma \ref{fix}, powers of them will have the same attracting points and repelling hyperplanes, and we can arrange for the attracting and repelling neighborhoods to be in arbitrarily small neighborhoods of these points and planes  respectively. Thus choose $n_p(k) > n_p(k-1), n_q(k) > n_q(k-1)$ that give rise to 
$\Rc \left(b_r^{\pm n_r(k)} \right) \subset \left(\overline{H}_{b_r^{\pm}} \right)_{\epsilon'}, \Ac \left(b_r^{\pm n_r(k)} \right) \subset \left( \overline{v}_{b_r^{\pm}} \right)_{\epsilon'}$, for every $r \in \{p,q\}$. For every other $r \in \N \setminus \{p,q\}$ we just set $n_r(k) = n_r(k-1)$. 

We leave it to the reader to verify that the induction holds. The only thing that really requires verification is the ping-pong conditions for:
$$\{f_1, f_2 \ldots ,f_{k}, b_1^{n_1(k)},b_2^{n_2(k)} ,\ldots, b_p^{n_p(k)}, 
\ldots, b_p^{n_p(k)}, \ldots,   \},$$
which follows quite directly based only on the induction hypothesis and the specific repelling and attracting neighborhoods we have found for $f_k^{\pm 1}, b_p^{\pm n_p(k)}, b_q^{\pm n_q(k)}$.
 
\appendix
\section{A relative version of Borel density theorem - By Tsachik Gelander and Yair Glasner} \label{appendix}
In this appendix we prove a relative version of the Borel Density Theorem for Invariant Random Subgroups in countable linear groups. 

\begin{theorem} (Relative version of the Borel density theorem)
Let $\Gamma < \GL_n(F)$ be a countable linear group with a simple, center free, Zariski closure. Then every non-trivial invariant random subgroup $\Delta \leftIRS \Gamma$ is Zariski dense almost surely.
\end{theorem}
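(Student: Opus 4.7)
The plan is to mimic the Lie-algebra argument of Subsection \ref{sec:Z_dense} to produce a dichotomy between Zariski-density and finiteness for a $\mu$-random subgroup, and then to eliminate the finite case by hand, using Lemma \ref{lem:f.i.centralizer} together with the center-freeness hypothesis. Let $H=\overline{\Gamma}^{Z}$, which by assumption is simple and center-free and hence Zariski-connected; set $\mathfrak{h}=\Lie(H)$, on which $H$ acts algebraically via the adjoint representation.

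First I would form the Borel, $\Gamma$-equivariant map
$$\Psi:\Sub(\Gamma)\longrightarrow\Gr(\mathfrak{h}),\qquad \Delta\longmapsto\Lie\bigl(\overline{\Delta}^{Z}\bigr),$$
and push $\mu$ forward to obtain a $\Gamma$-invariant probability measure $\Psi_{*}\mu$ on the Grassmannian $\Gr(\mathfrak{h})$. Since $\Gamma$ is Zariski-dense in the simple group $H$, Furstenberg's lemma \cite[Lemma 2]{Furst:Borel_dense} forces $\Psi_{*}\mu$ to be supported on the $H$-fixed points of $\Gr(\mathfrak{h})$; these are precisely the $H$-invariant subspaces of $\mathfrak{h}$, i.e.\ the ideals of $\mathfrak{h}$, so by simplicity they are just $\{0\}$ and $\mathfrak{h}$. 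Hence $\mu$-almost every $\Delta$ is either Zariski-dense in $H$, or has zero-dimensional and therefore finite Zariski closure; in the latter case the faithfulness of $\Gamma\hookrightarrow\GL_n(F)$ forces $\Delta$ itself to be a finite subgroup.

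Write $\mu=t\mu_{\mathrm{d}}+(1-t)\mu_{\mathrm{f}}$ according to this dichotomy. It suffices to prove that $\mu_{\mathrm{f}}$ is concentrated on the trivial subgroup, since then the non-triviality of $\mu$ forces $t=1$ and every $\Delta$ in the support of $\mu_{\mathrm{d}}$ is Zariski-dense. Under $\mu_{\mathrm{f}}$ the random $\Delta$ is finite, so Lemma \ref{lem:f.i.centralizer} gives $[\Gamma:Z_{\Gamma}(\Delta)]<\infty$ almost surely. The Zariski closure of this finite-index subgroup is a closed subgroup of finite index in $H$, and connectedness of $H$ forces $\overline{Z_{\Gamma}(\Delta)}^{Z}=H$. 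Consequently $Z_{H}(\Delta)\supset\overline{Z_{\Gamma}(\Delta)}^{Z}=H$, so $\Delta\subset Z(H)=\trivgp$ by center-freeness, yielding $\mu_{\mathrm{f}}=\delta_{\trivgp}$ as required.

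The main obstacle in fleshing this out is verifying Borel measurability of $\Psi$ and citing the precise form of Furstenberg's lemma that applies to countable Zariski-dense subgroups acting on a projective $H$-variety such as $\Gr(\mathfrak{h})$; both points are standard and are already used implicitly in Subsection \ref{sec:Z_dense} of the main text, so they can simply be invoked. Notably, this argument avoids all of the projective dynamics and ping-pong constructions of Sections \ref{sec:essprox}--\ref{sec:Poincare}; its entire content is Borel density on the Grassmannian, the elementary centralizer Lemma \ref{lem:f.i.centralizer}, and the center-freeness hypothesis on $H$.
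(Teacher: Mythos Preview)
Your proof is correct and follows the same two-step strategy as the paper: Furstenberg's lemma applied to the pushforward measure on $\Gr(\mathfrak{h})$ yields the Zariski-dense/finite dichotomy, and center-freeness together with the finite-index centralizer argument eliminates the finite case. The only place where the paper is more explicit is that, before invoking Furstenberg's lemma, it applies Proposition~\ref{prop:good_rep} to pass to a faithful \emph{unbounded} representation over a local field $k$, since the cited form of Furstenberg's lemma requires unboundedness of the acting group rather than mere Zariski density; this local-field passage is likewise set up explicitly in Subsection~\ref{sec:good_lin_2} prior to Subsection~\ref{sec:Z_dense}, so it is not quite ``implicit'' there either, but once noted your argument goes through verbatim.
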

\begin{proof}
We first claim that $\Delta$ is either trivial or infinite almost surely. Assume that $\Delta$ is finite with positive probability, in this case we may further assume that $\Delta$ is finite almost surely by conditioning on the event $\operatorname{Fin} := \{\Delta \ | \ |\Delta|< \infty\} \subset \Sub(\Gamma)$. Since the invariant measure is supported on the countable set $\Fin$, every ergodic component is a finite conjugacy class. Thus the IRS is supported on finite subgroups with finite index normalizers. Passing to the Zariski closure we conclude that $[\G:N_{\G}(\Delta)] < \infty$ and since the normalizer of a finite group is algebraic $N_{\G}(\Delta) > \G^{(0)}$ almost surely. Since $\G^{(0)}$ is center free $\Delta = \trivgp$. 

We apply Theorem \ref{prop:good_rep} to the group $\Gamma$; obtaining a local field $F < k$, a faithful, unbounded, strongly  irreducible projective representation $\rho: \G(k) \arrow \PGL_n(k)$ defined over $k$.   Let $\gc = \Lie(G)$ be the Lie algebra and $\Gr(\gc)$ the Grassmannian over $\gc$. By taking the Lie subalgebra corresponding to the Zariski closure of the random subgroup we obtain a random point in this Grassmannian
\begin{eqnarray*}
\Psi: \Sub(\Gamma) & \arrow & \Gr(\gc)(k) \\
\Delta & \mapsto & \Lie \left(\overline{\Delta}^{Z} \right)
\end{eqnarray*}
The push forward of the IRS gives rise to a $\rho(\Gamma)$-invariant measure on $\Gr(\gc)$. Since $\rho(\Gamma)$ is unbounded, it follows from Furstenberg's lemma \cite[Lemma 2]{Furst:Borel_dense}, applied in each dimension separately, that such a measure is supported on a disjoint union of linear projective subspaces. However $\Gamma$ itself is Zariski dense and $G$ is simple, so the only (ergodic) possibilities are the Dirac $\delta$ measures supported on the trivial and the whole Lie algebra: $\{\gc, \{0\}\}$. In the first case $\overline{\Delta}^{Z}$ is open and unbounded and hence equal to $\G$ (c.f. \cite{Shalom:inv_meas}). In the second case $\overline{\Delta}^{Z}$ is finite and hence trivial by the previous paragraph.
\end{proof}

As a consequence we obtain an alternative direct proof of Corollary 1.10, which bypasses the main theorem of the paper, which we restate for the convenience of the readers.
\begin{corollary} \label{cor:main_irs_restate}
Let $\Gamma < \GL_n(F)$ be a countable linear group. If $\Delta \leftIRS \Gamma$ is an $\IRS$ in $\Gamma$ that is almost surely amenable, then $\Delta < A(\Gamma)$ almost surely. 
\end{corollary}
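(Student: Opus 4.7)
The plan is to reduce the corollary to the simple case handled by the appendix's relative Borel density theorem, by exploiting the structure of the Zariski closure $\mathbf{G} = \overline{\Gamma}^{Z}$. First, I would pass to the finite-index normal subgroup $\Gamma^0 = \Gamma \cap \mathbf{G}^{(0)}(F)$ coming from the Zariski-connected component, and decompose the semisimple quotient $\mathbf{G}^{(0)}/\Rad(\mathbf{G}^{(0)})$ as an almost-direct product of simple factors $\mathbf{S}_1 \cdots \mathbf{S}_L$. Since $\mathbf{G}^{(0)}$ is connected it preserves each factor, and composing with the adjoint action yields homomorphisms $\chi_i: \Gamma^0 \to \Ad(\mathbf{S}_i)$ whose images are linear groups with simple, center-free Zariski closure. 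The restriction of $\mu$ to $\Gamma^0$, realized via $\Delta \mapsto \Delta \cap \Gamma^0$, is an amenable IRS on $\Gamma^0$.

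For each index $i$ with $\chi_i(\Gamma^0)$ non-amenable, Lemma \ref{lem:trivrad} gives $A(\chi_i(\Gamma^0)) = \trivgp$, and the pushforward IRS $(\chi_i)_*(\mu|_{\Gamma^0})$ is supported on amenable subgroups (images of amenable groups are amenable). If this pushforward were nontrivial, the appendix's theorem applied to it would force the generic subgroup to be Zariski dense in the non-amenable simple algebraic group $\Ad(\mathbf{S}_i)$, which via Lemma \ref{lem:fg_subgroup} and Lemma \ref{lem:trivrad} forces non-amenability, contradicting its amenability. Hence $\chi_i(\Delta \cap \Gamma^0) = \trivgp$ almost surely for each such $i$. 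Letting $I$ be this index set and $K = \bigcap_{i \in I} \ker(\chi_i) \cap \Gamma^0$, we get $\Delta \cap \Gamma^0 < K$ almost surely. Since conjugation by $\Gamma$ permutes the simple factors while preserving amenability of the corresponding $\chi_i(\Gamma^0)$, the set $I$ is $\Gamma$-invariant and $K$ is normal in $\Gamma$. Moreover $K$ projects into the product of the amenable quotients $\chi_i(\Gamma^0)$ for $i \notin I$ with solvable kernel contained in $\Rad(\mathbf{G}^{(0)}) \cdot Z(\mathbf{S})$, so $K$ is amenable. Therefore $K < A(\Gamma)$, and $\Delta \cap \Gamma^0 < A(\Gamma)$ almost surely.

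The remaining step, which I expect to be the main obstacle, is upgrading $\Delta \cap \Gamma^0 < A(\Gamma)$ to $\Delta < A(\Gamma)$. Since $[\Delta : \Delta \cap \Gamma^0] \le [\Gamma : \Gamma^0] < \infty$, the image $\pi(\Delta) = \Delta A(\Gamma)/A(\Gamma)$ in the quotient $\Gamma/A(\Gamma)$ is finite almost surely, providing a finite IRS on $\Gamma/A(\Gamma)$. The latter group has trivial amenable radical (a normal amenable lift would be an extension of amenable by amenable, so amenable and normal in $\Gamma$, hence contained in $A(\Gamma)$). Lemma \ref{lem:f.i.centralizer} then gives that $Z_{\Gamma/A(\Gamma)}(\pi(\Delta))$ has finite index almost surely, so $\pi(\Delta)$ has only finitely many conjugates $\bar H_1, \ldots, \bar H_k$, each centralized by a common finite-index subgroup $C$ obtained by intersecting their individual centralizers. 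Their joint normal closure $N = \langle \bar H_1, \ldots, \bar H_k \rangle$ is then centralized by $C$, so $N/Z(N)$ embeds into the finite group $\Gamma/A(\Gamma)$ modulo $C$ and is therefore finite; by Schur's theorem $[N,N]$ is finite, and being a finite normal subgroup of a group with trivial amenable radical it must be $\trivgp$. Hence $N$ is abelian and normal, so $N = \trivgp$, forcing $\pi(\Delta) = \trivgp$ and $\Delta < A(\Gamma)$ almost surely. Executing this final Schur-theoretic step cleanly is the hardest part; the preceding reduction is largely mechanical once the appendix's relative Borel density theorem is at hand.
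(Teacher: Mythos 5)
There is a genuine gap, and it sits exactly where you declared the argument ``largely mechanical'': in the claim that Zariski density of $\chi_i(\Delta\cap\Gamma^0)$ in the simple group $\Ad(\mathbf{S}_i)$ ``forces non-amenability''. Neither Lemma \ref{lem:fg_subgroup} nor Lemma \ref{lem:trivrad} gives this: both take non-amenability of the group in question as a \emph{hypothesis} and deduce something else, so citing them here is circular. And the implication itself is false in general: as Remark \ref{rem:ast_ammenable} points out, $\PSL_n(\overline{\F_p})$ is a locally finite (hence amenable) Zariski-dense subgroup of a simple center-free group. So for a countable, non-finitely-generated $\Gamma$ in positive characteristic, the appendix's dichotomy ``trivial or Zariski dense'' does not by itself exclude a Zariski-dense, amenable random subgroup, and ruling this out is precisely the content of the corollary in the hard case. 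The paper's proof deals with it in two stages that your proposal omits: first it proves the statement for finitely generated $\Gamma$, where the Tits alternative reduces to the locally finite case and \cite[Corollary 4.8]{Weherfritz:linear_groups} (valid because the field of matrix entries is then finitely generated) produces a finite-index unipotent subgroup contradicting simplicity of the Zariski closure; then it passes to general $\Gamma$ by exhausting with finitely generated subgroups $\Gamma_1<\Gamma_2<\cdots$, restricting the IRS to each $\Gamma_i$, and showing that every essential element of $\Gamma$ lies in the normal amenable subgroup $\liminf_n A(\Gamma_n)$, after which the locally essential lemma \ref{lem:loc_ess} finishes. Your argument as written would need to be supplemented by both of these steps (or an equivalent device) to be correct; in characteristic zero it can be salvaged directly, since there an amenable linear group is virtually solvable by Tits and cannot be Zariski dense in a positive-dimensional simple group, but the corollary is stated for arbitrary $F$.

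By contrast, the part you flagged as the main obstacle is essentially fine. Once one knows $\Delta\cap\Gamma^0<A(\Gamma)$ almost surely, the passage to $\Delta<A(\Gamma)$ via the finite IRS $\pi(\Delta)\leftIRS\Gamma/A(\Gamma)$, Lemma \ref{lem:f.i.centralizer}, and the Schur finite-commutator argument is a correct, purely group-theoretic rendering of what the paper does inside the algebraic group (where it observes that the normalizer of a finite subgroup is algebraic, hence contains $\H^{(0)}$, which is center-free). Your version is arguably cleaner in that it never re-enters the algebraic category, but it is not where the difficulty of the corollary lies.
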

\begin{proof} (of Corollary \ref{cor:main_irs_restate})
Let $\Gamma$ be a linear group and $\Delta \leftIRS \Gamma$ an IRS that is almost surely amenable. We wish to show that $\Delta < A(\Gamma)$ is contained in the amenable radical almost surely. At first we assume that $\Gamma$ is finitely generated. 

If $\Gamma < \GL_n(\Omega)$ is linear over an algebraically closed field, let $\G = \overline{\Gamma}^{Z}$ be its Zariski closure, $\H = \G/\Rad \left(\G^{(0)} \right)$ the semisimple quotient and $\H^{(0)} = \prod_{\ell = 1}^{L} \H_{\ell}$ the decomposition of $\H^{(0)}$ as a product of simple factors. Set $\hc_{\ell} = \Lie(\H_{\ell})$ and consider the combined map:
$$\G^{(0)} \longrightarrow \G^{(0)}/\Rad(\G^{(0)}) \stackrel{\Ad}{\longrightarrow} \prod_{\ell=1}^{L} \GL(\hc_{\ell})$$
Applying this map to $\Gamma^{(0)} := \Gamma \cap G^{(0)}$ we obtain $L$ different representations with simple center free Zariski closures. Let $\Delta^{(0)} \leftIRS \Gamma^{(0)}$ be the restriction of the IRS to the Zariski connected component. For every $1 \le \ell \le L$ we know by the previous paragraph that $\chi_{\ell}(\Delta^{(0)}) \leftIRS \chi_{\ell}(\Gamma^{(0)})$ is either trivial or Zariski dense. Assume by way of contradiction that the latter holds. This means that $\chi_{\ell}(\Delta^{(0)})$ does not admit any normal solvable subgroup, as its Zariski closure is simple. Since $\chi_{\ell}(\Delta^{(0)})$ is amenable, the Tits alternative \cite[Theorem 2]{Tits:alternative} says that $\chi_{\ell}(\Delta^{(0)})$ is locally finite. Now we use for the first time our assumption that $\chi_{\ell}(\Gamma^{(0)})$ is finitely generated: By \cite[Corollary 4.8]{Weherfritz:linear_groups} it follows that the locally finite subgroup $\chi_{\ell}(\Delta^{(0)})$ contains a further finite index subgroup consisting only of unipotent elements - contradicting the assumption that $\chi_{\ell}(\Delta^{(0)})$ has a simple Zariski closure. Thus we have concluded that $\chi_{\ell}(\Delta^{(0)}) = \trivgp$ almost surely for every $1 \le \ell \le L$. 

Let $A_{\ell} = \ker(\chi_{\ell} \cap \Gamma^{(0)})$. We have just proved that $\Delta^{(0)} < S := \cap_{\ell = 1}^{L} A_{\ell}$. Thus if $\Phi: \G \arrow \H = \G / \Rad(\G^{(0)})$ is the map to the semisimple quotient then $\Phi(\Delta) \leftIRS(\Phi(\Gamma))$ is an invariant random subgroup supported on finite subgroups. The same argument used earlier shows that $\Phi(\Delta^{(0)})$ is centralized by $\H^{(0)}$. Thus $\Delta$ is almost surely contained in the normal amenable subgroup $\Gamma \cap \Phi^{-1} (Z_{\H} ( \H^{(0)}))$ which completes the proof in the finitely generated case. 

Assume now that $\Gamma < \GL_n(F)$ is not finitely generated and  let $\Gamma_1 < \Gamma_2  < \ldots$ be a sequence of finitely generated subgroups ascending to the whole group. Applying restriction we obtain $\Delta_i := \Delta \cap \Gamma_i \leftIRS \Gamma_i$ an IRS in $\Gamma_i$ which is again, almost surely amenable. Since the group $\Gamma_i$ is finitely generated we may deduce from the above paragraphs that $\Delta_i$ is almost surely contained in $A(\Gamma_i)$. Since an essential element element in $\Gamma_i$ is contained in $\Delta_i$ with positive probability this means that every essential element in $\Gamma_i$ is contained in $A(\Gamma_i)$. Now essential elements in $\Gamma$ are eventually essential in $\Gamma_i$ so, denoting by $\Eg$ the collection of essential elements in $\Gamma$ we have 
$$\Eg \subset \liminf_n A(\Gamma_i) := \bigcup_n \bigcap_{i \ge n} A(\Gamma_i)$$ 
$\liminf A(\Gamma_i)$ is amenable as an ascending union of amenable groups. It is also normal since if $\gamma \in \Gamma_m$ then for every $o \ge \max \{m,n\}$ we have $\cap_{i \ge n} A(\Gamma_i) \subset A(\Gamma_o)$ so that $\gamma \left( \cap_{i \ge n} A(\Gamma_i) \right) \gamma^{-1} \subset \gamma A(\Gamma_o) \gamma^{-1} = A(\Gamma_o)$ and consequently 
$$\gamma \left(\bigcap_{i \ge n} A(\Gamma_i) \right) \gamma^{-1} \subset \bigcap_{i \ge \max\{m,n\}} A(\Gamma_i).$$  
Being an amenable normal subgroup $\liminf_n A(\Gamma_n)$ is contained in the amenable radical $A(\Gamma)$ which implies that every essential element is contained in $A(\Gamma)$. Lemma \ref{lem:loc_ess}, applied only to cyclic groups, shows that $\Delta$ almost surely contains only essential elements, thus with probability one $\Delta < A(\Gamma)$ and the Corollary is proved.
\end{proof}

\noindent {\bf Acknowledgments.}
I am grateful for significant input from Mikl\'{o}s Ab\'{e}rt, Peter Abramenko, Uri Bader, Tsachik Gelander, Eli Glasner, Dennis Osin, Robin Tucker-Drob, B\'{a}lint Vir\'{a}g and Kevin Wortman. I thank the referee for many helpful comments. I thank Shalvia for going over the English. This work was written while I was on sabbatical at the University of Utah. I am  very grateful to hospitality of the math department there and to the NSF grants that enabled this visit. The author acknowledges support from U.S. National Science Foundation grants DMS 1107452, 1107263, 1107367 ``RNMS: Geometric structures And Representation varieties" (the GEAR Network). The author was partially supported by the Israel Science Foundation grant ISF 441/11.
 
\bibliographystyle{alpha}
\bibliography{../tex_utils/yair}

\noindent {\sc Yair Glasner.} Department of Mathematics.
Ben-Gurion University of the Negev.
P.O.B. 653,
Be'er Sheva 84105,
Israel.
{\tt yairgl\@@math.bgu.ac.il}\bigskip

\noindent {\sc Tsachik Gelander} Department of Mathematics.
Weizmann Institute of Science.
Rehovot 76100,
Israel.
{\tt tsachik.gelander\@@weizmann.ac.il}\bigskip

\end{document}